\theoremstyle{definition}
\newtheorem{example}[subsubsection]{Example}
\newtheorem{rem}[subsubsection]{Remark}
\theoremstyle{plain}
\newtheorem{prop}[subsubsection]{Proposition}
\newtheorem{thm}[subsubsection]{Theorem}
\newtheorem{lemma}[subsubsection]{Lemma}
\newtheorem{cor}[subsubsection]{Corollary}
\newtheorem{propo}{Proposition}
\newtheorem{thrm}{Theorem}
\newcommand{\mbf}{\mathbf}
\newcommand{\e}{\mbf e}
\newcommand{\f}{\mbf f}
\newcommand{\h}{\mbf h}
\newcommand{\D}{\mbf d}
\newcommand{\aaa}{\underline{\alpha}}
\newcommand{\bbb}{\underline{\beta}}
\newcommand{\ep}{\epsilon}
\title[]{Quasi-split symmetric pairs of $U(\mathfrak{gl}_N)$\\ and \\their Schur algebras}
\author[Y. Li and J. Zhu]{Yiqiang Li and Jieru Zhu}
\address{Department of Mathematics\\ University at Buffalo, SUNY  \\Buffalo, NY 14260}
\email{yiqiang@buffalo.edu (Li)}
\address{Department of Mathematics\\ University at Buffalo, SUNY  \\Buffalo, NY 14260}
    \email{jieruzhu@buffalo.edu (Zhu)}
\keywords{} 
\subjclass{}
\begin{document}

\maketitle

\begin{abstract}
We establish explicit isomorphisms of two seemingly-different algebras, and their Schur algebras,  arising from the centralizers of  two different type B Weyl group actions in 
Schur-like dualities. 
We provide a presentation of the geometric counterpart of  the above Schur algebras in~\cite{BKLW18} specialized at $q=1$. 
\end{abstract}

\section{Introduction}

\subsection{Overview}
A classical result of I. Schur states that the action of the symmetric group $S_d$ fully centralizes the natural action of the complex general linear algebra $\mathfrak{gl}_N$ on the tensor space $(\mathbb C^N)^{\otimes d}$. As a result, representations for $\mathfrak{gl}_N$, which are summands of $(\mathbb C^N)^{\otimes d}$, are in bijection with representations for $S_d$. The Schur algebra of type A is the centralizer algebra of $S_d$ on $(\mathbb{C}^N)^{\otimes d}$.

The generalization to type B has an interesting twist: the orthogonal group does not fully centralize the action of the type B Weyl group on the tensor space. On one hand, the orthogonal group is known to centralize the action of the Brauer algebra.
On the other hand, the description of the centralizer of the type B Weyl group action, and its quantization,  is nontrivial, as it bears no relation to the orthogonal group.
Two historical approaches have emerged to tackle this problem  by Green~\cite{Green97} and Shoji-Sakamoto~\cite{SS99}, see also~\cite{BW18b, HS04, MS16}. 
In  Green's approach \cite[Sec 2.3]{Green97},  the centralizer is given by a subgroup of $GL_N$, its Lie algebra being the fixed-point subalgebra $\mathfrak{gl}_N^{\theta}$ of $\mathfrak{gl}_N$ under a certain involution $\theta$, and its quantum analogue and double centralizer property is made explicitly in \cite{BW18b}; while in 
the Shoji-Sakamoto's approach the centralizer is given as a homomorphic
image of a two-block subalgebra $U$ of $U(\mathfrak{gl}_N)$.  
The pairs $(U(\mathfrak{gl}_N), U)$ and $(U(\mathfrak{gl}_N), U(\mathfrak{gl}_N^{\theta}))$ turn out to be (infinitesimal) quasi-split symmetric pairs of type A~\cite{SP87}. 
The purpose of this paper is to establish explicit isomorphisms between $U$ and $U(\mathfrak{gl}_N^{\theta})$, and consequently on their respective Schur algebras,
despite the ostensibly different actions from the type B Weyl group.

\subsection{Main results}

The  involution $\theta$ is induced by the diagram involution, say $\theta$ for an abuse of notation, on the Dynkin graph of $\mathfrak{gl}_N$
sending the Chevelley generator $E_i$ to $F_{\theta(i)}$ and $F_i$ to $E_{\theta(i)}$.
The fixed-point subalgebra $\mathfrak{gl}_N^{\theta}$ are generated by the simplest possible elements fixed by $\theta$: $\mbf e_i= E_i+F_{\theta(i)}, \mbf f_i=F_i+E_{\theta(i)}$
and $\mbf d_i = H_i-H_{\theta(i)}$. 
But the defining relations with respect to these generators are unknown until now, even though their quantum version in~\cite{Letzter97} has an explicit presentation under a quantized analogues of $\mbf e_i, \mbf f_i, \mbf d_i$. 
To this end, 
we adapt Letzter's proof  in \cite{Letzter97} for the quantum case to give a presentation of  $U(\mathfrak{gl}_N^{\theta})$, 
the universal enveloping algebra of $\mathfrak{gl}_N^{\theta}$ with respect to $\mbf e_i, \mbf f_i, \mbf d_i$.

\begin{propo}[Propositions~\ref{presentation},~\ref{presentation2}]\label{main1}
The algebra $U(\mathfrak{gl}_N^{\theta})$ is generated by $\mathbf{e}_1,\dots,\mathbf{e}_n$, $\mathbf{f}_1,\dots,\mathbf{f}_n$, $\mathbf{d}_1,\dots, \mathbf{d}_{n+1}$ subject to a list of relations (\ref{d})-(\ref{fen}), when $N=2n+1$.
For $N$ even, there is a similar presentation for $U(\mathfrak{gl}^{\theta}_N)$; see (R1)-(R4). 
\end{propo}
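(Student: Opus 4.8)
The plan is to obtain the presentation of $U(\mathfrak{gl}_N^\theta)$ by transporting, via the PBW-type specialization $q\to 1$, the presentation that Letzter establishes for the quantum symmetric pair coideal subalgebra $U_q(\mathfrak{gl}_N^\theta)$ in \cite{Letzter97}. Concretely, I would first write down the quantum generators $B_i, B_{-i}$ (the quantum analogues of $\mbf e_i,\mbf f_i$) and the torus part inside $U_q(\mathfrak{gl}_N)$, recall Letzter's relations, and then carefully take the classical limit: rescale the generators so that $\frac{B_i - B_i^{\mathrm{op}}}{q-1}$ (or the appropriate normalization) has a well-defined limit equal to $\mbf e_i$, and likewise for $\mbf f_i$ and for $\mbf d_i = H_i - H_{\theta(i)}$. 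The quantized Serre-type relations among the $B_i$ degenerate, in the limit, to the classical Serre relations together with the extra "inhomogeneous Serre" relations characteristic of symmetric pairs; these are the relations (\ref{d})--(\ref{fen}).

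**First**, I would fix the combinatorics of $\theta$: for $N = 2n+1$ the diagram involution has a fixed node in the middle, so the generators split into those indices $i$ with $\theta(i)\neq i$ (giving the ordinary $\mathfrak{sl}$-type relations paired up) and the distinguished middle index, which behaves like a rank-one symmetric pair of type $\mathfrak{sl}_2/\mathfrak{so}_2$ and produces a genuinely deformed Serre relation. For $N$ even there is no fixed node but an "exchanged pair" of central nodes, which produces the slightly different relations (R1)--(R4); I would treat this case by the same limiting argument with the modified rank-two local picture. In both cases I would verify directly that the proposed generators $\mbf e_i,\mbf f_i,\mbf d_j$ satisfy the listed relations inside $U(\mathfrak{gl}_N)$ — this is the routine "relations hold" half, done by explicit computation with $E_i, F_i, H_i$.

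**The substantive half** is surjectivity-plus-no-extra-relations, i.e. showing the abstract algebra defined by (\ref{d})--(\ref{fen}) maps isomorphically onto $U(\mathfrak{gl}_N^\theta)$. The natural route is a dimension/PBW count: exhibit a spanning set of the abstract algebra indexed by a PBW-type basis (ordered monomials in $\mbf e_i$'s, $\mbf f_i$'s, and $\mbf d_j$'s), show using only the relations that it spans, and match it against a known basis of $U(\mathfrak{gl}_N^\theta)$ coming from a triangular/PBW decomposition of the fixed-point Lie algebra $\mathfrak{gl}_N^\theta$. Here I would lean on Letzter's quantum argument: she proves the analogous statement for $U_q$, and an $\mathcal{A}$-form / specialization argument ($\mathcal{A} = \mathbb{C}[q,q^{-1}]$ localized at $q=1$) should transfer the basis statement to $q=1$, provided one checks that the $\mathcal{A}$-form is free and that the quantum PBW basis specializes to a classical PBW basis.

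**The main obstacle** I anticipate is precisely this specialization step: Letzter's presentation is adapted to the quantum setting where the coideal subalgebra sits inside $U_q(\mathfrak{gl}_N)$ with a nontrivial coproduct, and the passage $q\to 1$ requires choosing the right integral forms and the right normalizations of $B_i$ so that no generator blows up or collapses, and so that the deformed Serre relations limit to the correct inhomogeneous relations rather than to the naive classical Serre relations. Getting the scalars in relations (\ref{fen}) (the inhomogeneous/deformed terms) exactly right — tracking factors of $q\pm q^{-1}$, $[2]_q$, etc. through the limit, and handling the even case where the central nodes interact — is where the care is needed; the rest is bookkeeping with triangular decompositions and PBW bases.
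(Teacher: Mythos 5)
Your proposed route---transporting Letzter's quantum presentation to $q=1$ via an integral $\mathcal{A}$-form---is genuinely different from what the paper does, and it leaves the decisive step unproved. The paper does \emph{not} specialize; it adapts Letzter's \emph{method} (a filtration/degree argument) and runs it entirely in the classical setting. Concretely: after checking the relations hold (the routine half, which you also identify), the paper lets $B$ be the free-ish algebra subject only to the commutation relations (\ref{d})--(\ref{df}), so $B$ has an obvious monomial basis $\{w\,\mathbf d^{\mathbf s}\}$; it then puts the filtration $\deg \mathbf e_i = \deg \mathbf f_i = 1$, $\deg \mathbf d_i = 0$ on $B$, and on $U(\mathfrak{gl}_{2n+1})$ the filtration $\deg E_i = 1$, $\deg F_i = \deg H_i = 0$. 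Under the surjection $B\to U^j$, the top-degree piece of $\phi(\mathbf e_i)=E_i+F_{2n+1-i}$ is $E_i$ and of $\phi(\mathbf f_i)=F_i+E_{2n+1-i}$ is $E_{2n+1-i}$, so the associated-graded map lands in the \emph{positive} part of $U(\mathfrak{gl}_{2n+1})$, whose PBW basis and Serre ideal are classical and explicit. A minimal-degree kernel element not in $I$ would have top-degree coefficients killed precisely by type-$A$ Serre relations, and these match up with (\ref{eYB})--(\ref{fen}), letting one reduce degree and reach a contradiction. This is self-contained and uses nothing but the ordinary PBW theorem.

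By contrast, the step you flag as ``the main obstacle''---constructing a free $\mathcal{A}$-form of Letzter's coideal subalgebra over $\mathbb{C}[q,q^{-1}]_{(q-1)}$, proving its quantum PBW basis specializes, and verifying that the defining ideal specializes flatly so no new relations appear at $q=1$---is not done in \cite{Letzter97} (which works over $\mathbb{C}(q)$) and is in fact the entire content of the theorem. As written, your sketch asserts rather than proves this; without it, the argument does not close. If you do want to pursue the specialization route, you would essentially need to redo a PBW/filtration analysis at the $\mathcal{A}$-form level, which is at least as much work as the paper's direct argument and adds quantum bookkeeping (the $[2]_q$, $q\pm q^{-1}$ factors you mention) that the paper avoids entirely. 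The direct adaptation is therefore the cleaner path here.
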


It has been somewhat known 
that the algebra $U(\mathfrak{gl}_N^{\theta})$ and the two-block Levi subalgebra $U$ are isomorphic to each other, but the isomorphism is implicit. 
We further establish an explicit isomorphism between $U(\mathfrak{gl}_N^{\theta})$ and  $U$ in Lemma~\ref{phi} and subsequent comments. Here $e_i$, $f_i$, $h_i$ are the Chevalley generators of the first block $\mathfrak{gl}_{n+1}$, and $e_{\overline{i}}$, $f_{\overline{i}}$, $h_{\overline{i}}$ are the Chevalley generators of the second block $\mathfrak{gl}_{n}$ in $U$.

\begin{thrm}[Theorem~\ref{basisoffixedpoint}]
There is an explicit isomorphism $\phi: U(\mathfrak{gl}_N^{\theta}) \to U(\mathfrak{gl}_{n+1}\oplus \mathfrak{gl}_n)$ between the two algebras, given via
\begin{align*}
&\e_i \mapsto e_i+e_{\overline{i}}  \hspace{.3 in}\f_i \mapsto f_i+f_{\overline{i}}  \hspace{.3 in} (i<n),   \\
&\e_n \mapsto 2e_n  \hspace{.5 in} \f_n \mapsto f_n  \hspace{.3 in}
\D_i \mapsto h_i + h_{\overline{i}} \hspace{.1 in} (1\leq i \leq n) \hspace{.3 in} 
\D_{n+1} \mapsto 2h_{n+1}.
\end{align*}
\end{thrm}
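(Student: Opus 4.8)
The plan is to verify that the proposed assignment $\phi$ respects the presentation of $U(\mathfrak{gl}_N^\theta)$ established in Proposition~\ref{main1}, so that $\phi$ is a well-defined algebra homomorphism, and then to exhibit an inverse (or a counting/PBW argument) showing it is bijective. Concretely, one writes $U(\mathfrak{gl}_{n+1}\oplus\mathfrak{gl}_n)$ with its standard Chevalley--Serre presentation in the generators $e_i,f_i,h_i$ (block one) and $e_{\overline i},f_{\overline i},h_{\overline i}$ (block two), which pairwise commute across the two blocks. One then substitutes the images $\e_i\mapsto e_i+e_{\overline i}$, etc., into each relation $(\ref{d})$--$(\ref{fen})$ of Proposition~\ref{main1} and checks it holds. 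Because the two blocks commute, most checks split as a sum of the corresponding $\mathfrak{sl}_2$- or $\mathfrak{gl}$-relations in each block separately; the only delicate indices are $i=n-1$ and $i=n$, where the ``folding'' at the branch node of the type A diagram produces the factor $2$ in $\e_n\mapsto 2e_n$ and $\D_{n+1}\mapsto 2h_{n+1}$, and where the Serre-type relations acquire the inhomogeneous lower-order terms characteristic of a quasi-split symmetric pair.

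First I would dispose of the Cartan relations: $[\D_i,\D_j]\mapsto 0$ is immediate, and $[\D_i,\e_j]$, $[\D_i,\f_j]$ reduce to checking that $h_i+h_{\overline i}$ acts on $e_j+e_{\overline j}$ with the expected eigenvalue — this is just additivity of the type A Cartan matrix across the two blocks, with the coefficient $2$ at $j=n$ exactly matching the $2e_n$ in the image. Next I would check the $\mathfrak{sl}_2$ relations $[\e_i,\f_j]=\delta_{ij}(\cdots)$: for $i,j<n$ this is $[e_i+e_{\overline i},f_j+f_{\overline j}]=\delta_{ij}(h_i+h_{\overline i})=\delta_{ij}\D_i$ since cross terms vanish; for $i=j=n$ it is $[2e_n,f_n]=2h_n$, which must be matched against whatever relation $(\ref{fen})$ (or its neighbors) prescribes for $[\e_n,\f_n]$ — one expects it to come out as $\D_n$ or a rescaled version, and reconciling the normalization here is the first place the factor-$2$ bookkeeping has to be done carefully. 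The Serre relations among the $\e$'s (and among the $\f$'s) for nodes away from $n$ are inherited blockwise; the ones involving node $n$ are the substantive ones.

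\textbf{The main obstacle} will be the Serre-type relations $(\ref{een})$ and $(\ref{fen})$ at the branch node, i.e.\ the relations tying $\e_{n-1}$ and $\e_n$ (resp.\ $\f_{n-1}$ and $\f_n$) together. These are precisely the relations that, in Letzter's presentation of the $\imath$quantum group, are deformed away from the naive Serre relation and carry extra terms linear in a Cartan element; at $q=1$ they become the inhomogeneous ``classical'' Serre relations of the symmetric pair. Verifying that $\e_{n-1}\mapsto e_{n-1}+e_{\overline{n-1}}$ and $\e_n\mapsto 2e_n$ satisfy such a relation amounts to an explicit computation of iterated brackets like $[\e_{n-1},[\e_{n-1},\e_n]]$ in $\mathfrak{gl}_N$, using that $\mbf e_{n-1}=E_{n-1}+F_{\theta(n-1)}=E_{n-1}+F_{n+2}$ and $\mbf e_n=E_n+F_{n+1}$ with $\theta(n)=n+1$, and tracking how the two summands of $\mbf e_n$ interact with the two summands of $\mbf e_{n-1}$; this is where the factor $2$ and the lower-order correction are simultaneously pinned down. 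Once all relations are checked, surjectivity is clear (the listed images generate $U(\mathfrak{gl}_{n+1}\oplus\mathfrak{gl}_n)$: $e_n$ from $\tfrac12\e_n$, then $e_{\overline n}$ and each $e_i,e_{\overline i}$ are recovered inductively using the Cartan data, similarly for $f$'s and $h$'s), and injectivity follows either by constructing the inverse explicitly on generators and checking it against the Chevalley--Serre relations, or by a dimension/PBW count — both algebras have PBW bases indexed by the positive roots of $\mathfrak{gl}_{n+1}\oplus\mathfrak{gl}_n$ together with $n+1$ Cartan generators, and $\phi$ carries one spanning set onto the other. For $N$ even the argument is identical with the presentation (R1)--(R4) and the obvious modifications at the other end of the diagram.
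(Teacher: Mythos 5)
Your homomorphism check and the surjectivity sketch are sound (the former is the content of the paper's Lemma~\ref{phi}; the latter is a short inductive argument along the lines you describe). The gap is in the injectivity step, and it is precisely the difficulty the paper flags in the introduction: the obvious commutative subalgebra spanned by $\mathbf{d}_1,\dots,\mathbf{d}_{n+1}$ is \emph{not} a Cartan subalgebra of $\mathfrak{g}^\theta$, so no naive dimension or root count in the presentation generators is available.

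Your proposed ``dimension/PBW count'' is circular as written: you assert that ``both algebras have PBW bases indexed by the positive roots of $\mathfrak{gl}_{n+1}\oplus\mathfrak{gl}_n$ together with $n+1$ Cartan generators,'' but for $U(\mathfrak{g}^\theta)$ that indexation is exactly what the theorem establishes, not a known input. (The slip to ``$n+1$ Cartan generators'' is revealing — $\mathfrak{gl}_{n+1}\oplus\mathfrak{gl}_n$ has $2n+1$; you are implicitly identifying $\mathfrak{h}^\theta=\operatorname{span}\{\mathbf{d}_i\}$ with a Cartan of $\mathfrak{g}^\theta$, which fails, as the paper's remark after Lemma~\ref{hfixed} points out.) Your alternative, constructing the inverse on generators, would work, but the preimages $\phi^{-1}(h_i)=\mathbf{h}_i$ are the nontrivial recursively defined elements of (\ref{preimageofh}), and verifying those formulas requires the same analysis. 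The paper's actual route is: build a spanning set of $\mathfrak{g}^\theta$ out of iterated brackets of the simple generators (Theorem~\ref{span}, which needs Lemmas~\ref{admissible}, \ref{shuffle}, \ref{keyrewrite}, since the $\alpha$-weight spaces under $\operatorname{ad}\mathbf{d}_i$ can be $2$-dimensional and the $0$-weight space is $(2n+1)$-dimensional), then compute the $\phi$-images of these root vectors (Lemma~\ref{rootvectors}) and observe they land on a basis of $\mathfrak{gl}_{n+1}\oplus\mathfrak{gl}_n$; linear independence of the spanning set follows, giving bijectivity. A genuinely different but valid shortcut you did not propose would be to compute $\dim\mathfrak{g}^\theta$ directly from the $\pm1$-eigenspace decomposition of $\theta$ on the matrix units of $\mathfrak{gl}_{2n+1}$ and pair it with your surjectivity argument; that is not what your PBW count does.
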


While it is not hard to show that $\phi$ is a homomorphism. It is nontrivial to show that it is an isomorphism. The difficulty lies in the fact that 
the obvious commutative subalgebra generated by $\mbf d_i$ is not the Cartan subalgebra of $U(\mathfrak{gl}^{\theta}_N)$ via an easy dimension count. 

To describe the preimage of $\phi$ we employ the notion of root vectors in relation to a root space decomposition of $\mathfrak{gl}_N^{\theta}$ with respect to $\mbf d_i$'s. In particular, let $\epsilon_i=\mathbf{d}_i^*$, and for an element $\alpha$ in the span of $\epsilon_i$, the $\alpha$-root space is all elements $x$ in $\mathfrak{gl}_N^{\theta}$ such that $[\mathbf{d}_i,x]=\alpha(\mathbf{d}_i)x$. In Proposition~\ref{stalksofweights}, we gave a description of the root space decomposition of $\mathfrak{gl}_N^{\theta}$:
\begin{propo}[Proposition~\ref{stalksofweights}]
The root spaces of $\mathfrak{gl}_N^{\theta}$ is either $1$ or $2$-dimensional if $\alpha\neq 0$. The $0$-weight space is $N$-dimensional, spanned by $\mathbf{d}_1,\dots,\mathbf{d}_{n+1}$, as well as explicitly and recursively defined elements $\mathbf{h}_1,\dots,\mathbf{h}_n$ in (\ref{preimageofh}).
\end{propo}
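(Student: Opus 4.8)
The plan is to analyze the $\theta$-action directly on the standard basis $\{E_{ab}\}$ of $\mathfrak{gl}_N$ and to read off the $\mathbf{d}$-weight of each $\theta$-fixed combination. Recall that $\theta$ is the diagram involution, which on matrix units acts by $E_{ab}\mapsto E_{\theta(b),\theta(a)}$ where $\theta(a)=N+1-a$; hence $\mathfrak{gl}_N^\theta$ has a basis consisting of the symmetrized elements $E_{ab}+E_{\theta(b),\theta(a)}$ (and the single fixed element $E_{ab}$ when $E_{ab}=E_{\theta(b),\theta(a)}$, i.e. on the anti-diagonal). First I would compute $[\mathbf{d}_i, E_{ab}]$ for $\mathbf{d}_i=H_i-H_{\theta(i)}=E_{ii}-E_{i+1,i+1}-E_{\theta(i),\theta(i)}+E_{\theta(i)+1,\theta(i)+1}$, which is a routine diagonal bracket, and then do the same for the symmetrized element $E_{ab}+E_{\theta(b),\theta(a)}$. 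Because $\theta$ negates the $\mathbf{d}$-grading — i.e. if $x$ has weight $\alpha$ then $\theta(x)$ has weight $-\alpha$ in the obvious sense once one checks $[\mathbf{d}_i,\theta(y)]=-\theta([\mathbf{d}_i,y])$ is \emph{not} quite right and instead $\theta$ sends the $H_i$-weight $\beta$ to a $\theta$-twisted weight — one sees that $E_{ab}+E_{\theta(b),\theta(a)}$ is genuinely a $\mathbf{d}$-weight vector. The key combinatorial point: a nonzero weight $\alpha$ in terms of $\epsilon_i=\mathbf{d}_i^*$ is hit by at most two basis matrix units $E_{ab}$ (namely $E_{ab}$ and possibly one other pair related by reflecting one index), giving root spaces of dimension $1$ or $2$; I would make the ``$1$ vs.\ $2$'' dichotomy precise by tracking when the two contributing matrix units coincide or when one of them lies on the anti-diagonal.

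For the zero weight space, the plan is a dimension count combined with an explicit spanning set. The dimension of $\mathfrak{gl}_N^\theta$ is $\binom{N+1}{2}$ when $N=2n+1$ (the number of $\theta$-orbits on pairs $(a,b)$, counting anti-diagonal pairs once), and subtracting the total dimension of the nonzero root spaces — which I can enumerate from the computation above — leaves exactly $N=2n+1$ for the zero weight space. Then I would exhibit $2n+1$ linearly independent elements of weight zero: the $n+1$ generators $\mathbf{d}_1,\dots,\mathbf{d}_{n+1}$ together with the $n$ elements $\mathbf{h}_1,\dots,\mathbf{h}_n$ defined recursively in~(\ref{preimageofh}). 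The defining recursion for the $\mathbf{h}_j$ should be set up so that each $\mathbf{h}_j$ is a $\theta$-fixed diagonal-plus-antidiagonal element commuting with all $\mathbf{d}_i$; I would verify by induction on $j$ that $\mathbf{h}_j$ indeed has weight zero and that the whole collection is linearly independent, for instance by exhibiting a triangularity with respect to the matrix-unit basis or by pairing against a dual set.

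I expect the main obstacle to be the zero weight space, specifically proving linear independence of $\{\mathbf{d}_i\}\cup\{\mathbf{h}_j\}$ and confirming that they span rather than merely fit the dimension count — the subtlety flagged in the introduction is precisely that the $\mathbf{d}_i$ alone do \emph{not} span a Cartan, so the $\mathbf{h}_j$ are essential and their recursive definition must be handled carefully. The nonzero root space analysis, by contrast, should be a finite bookkeeping exercise: once the action $[\mathbf{d}_i, E_{ab}+E_{\theta(b),\theta(a)}]$ is written down, the dimension-$1$-or-$2$ statement follows by inspecting how many $\theta$-orbits of matrix units share a given weight vector $\alpha=\sum c_i\epsilon_i$, and this reduces to an elementary count on indices $a,b\in\{1,\dots,N\}$.
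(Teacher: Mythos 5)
Your overall strategy---working directly with matrix units, grouping them into $\theta$-orbits, and reading off the $\mathbf d$-weight of each orbit---is genuinely different from the paper's, which constructs root vectors $X_\alpha$ by iterated brackets of the generators, defines the map $s$ on positive roots of $\mathfrak{gl}_{2n+1}$, computes the fibers of $s$ by decomposing each root additively into simple roots, and then invokes the spanning Theorem~\ref{span} together with the isomorphism $\phi$ (Lemma~\ref{rootvectors}). If carried out correctly your route would be more elementary and self-contained. However, there is a concrete error that derails it: the formula $\theta(E_{ab})=E_{\sigma(b),\sigma(a)}$ with $\sigma(a)=N+1-a$ is \emph{not} the action of the paper's $\theta$ on matrix units. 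That map is ``reflect across the anti-diagonal,'' which is an \emph{anti}-automorphism of the associative algebra; without a compensating sign it is not a Lie algebra automorphism at all, and its $+1$-eigenspace is not closed under the bracket. Checking against $\theta(E_i)=F_{\tau(i)}$ and $\theta(F_i)=E_{\tau(i)}$ with $\tau(i)=N-i$ shows the correct formula is $\theta(E_{ab})=E_{\sigma(a),\sigma(b)}$, i.e.\ conjugation by the anti-diagonal permutation matrix: indeed $\theta(E_{i,i+1})=E_{\sigma(i),\sigma(i+1)}=E_{N+1-i,\,N-i}=F_{\tau(i)}$.

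Two consequences of this error are already visible in your write-up. First, the dimension count $\binom{N+1}{2}$ is wrong; with the correct $\theta$ the only individually fixed matrix unit is $E_{n+1,n+1}$, and $\dim\mathfrak{g}^{\theta}=\tfrac{N^2+1}{2}=2n^2+2n+1$, which matches $\dim(\mathfrak{gl}_{n+1}\oplus\mathfrak{gl}_n)$, whereas $\binom{N+1}{2}=2n^2+3n+1$ does not. Second, your hedged remark that $\theta$ ``negates the $\mathbf d$-grading'' is a symptom of the same problem: since each $\mathbf d_i$ is $\theta$-fixed, $\theta$ commutes with $\operatorname{ad}\mathbf d_i$ and therefore \emph{preserves} every weight---that is precisely why $E_{ab}+E_{\sigma(a),\sigma(b)}$ is a $\mathbf d$-weight vector. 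Once the formula is corrected, your bookkeeping goes through cleanly: writing $p(a)=\min(a,\sigma(a))\in\{1,\dots,n+1\}$, the $\mathbf d$-weight of $E_{ab}$ is $\epsilon_{p(a)}-\epsilon_{p(b)}$; a nonzero weight $\epsilon_i-\epsilon_j$ with both $i,j\le n$ collects the two $\theta$-orbits $\{E_{ij},E_{\sigma(i)\sigma(j)}\}$ and $\{E_{i\sigma(j)},E_{\sigma(i)j}\}$ (a two-dimensional root space), while if exactly one of $i,j$ equals $n+1$ these orbits coincide (one-dimensional); the zero-weight space consists of the $2n$ orbits $\{E_{aa},E_{\sigma(a)\sigma(a)}\}$ and $\{E_{a\sigma(a)},E_{\sigma(a)a}\}$ for $a\le n$ together with the fixed $E_{n+1,n+1}$, for a total of $2n+1=N$. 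Even then, note the statement asks you to identify the zero-weight space as the span of $\mathbf d_1,\dots,\mathbf d_{n+1}$ and the specific elements $\mathbf h_1,\dots,\mathbf h_n$ of~(\ref{preimageofh}); matching your matrix-unit basis to those recursively-defined elements is a further step, which the paper handles via the root-vector formalism and the isomorphism $\phi$.
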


This gives a precise description of the Cartan subalgebra, by giving additional elements which commute with the generators $\mathbf{d}_i$, as a basis of the $0$-weight space. When the $\alpha$-root space is $2$-dimensional, let $X_{\alpha}$ and $X_{\alpha}'$ be two explicit elements which span the $\alpha$-root space, and use $X_{\alpha}$ if the associated root space is $1$-dimensional. The roots $\alpha$ in $\mathfrak{gl}_N^{\theta}$ can be identified with roots in $\mathfrak{gl}_{n+1}$ and $\mathfrak{gl}_n$ using successive imbedding of Lie algebras. For proper indices $i$ and $j$, let $Y_{\epsilon_i-\epsilon_j}=E_{ij}$ be the matrix unit in $\mathfrak{gl}_{n+1}$, and $Z_{\epsilon_i-\epsilon_j}=E_{ij}$ in $\mathfrak{gl}_{n}$, where the notation distinction signals the specific block in $\mathfrak{gl}_N$. Then in Theorem~\ref{rootvectors} we gave explicit description of the preimage of $\phi$ in terms of root vectors, here $\mathbf{h}_{n+1}=\frac{1}{2}\mathbf{d}_{n+1}$:

\begin{thrm}[Theorem~\ref{rootvectors}]
The following is true
\begin{align*}
\phi(X_{\alpha})=Y_{\alpha} \hspace{.3 in}
\phi(X'_{\alpha})=Y_{\alpha}+Z_{\alpha} \hspace{.3 in}
\phi(\mathbf{h}_i)=h_i \hspace{.3 in}(1\leq i\leq n+1).
\end{align*}
\end{thrm}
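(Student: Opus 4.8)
The plan is to verify the three families of identities directly, in an order that lets each case feed the next, exploiting the fact that $\phi$ is already known to be an algebra homomorphism and that the root spaces of $\mathfrak{gl}_N^\theta$ are at most two-dimensional (Proposition~\ref{stalksofweights}). First I would settle the simple-root cases, where $X_\alpha$ and $X'_\alpha$ are built from the generators: for $\alpha=\epsilon_i-\epsilon_{i+1}$ with $i<n$ we have (by the definitions in the earlier sections) $X_\alpha$ and $X'_\alpha$ expressed in terms of $\mathbf e_i$ and the fixed-point generators, so $\phi(X_\alpha)$ and $\phi(X'_\alpha)$ can be computed from $\phi(\mathbf e_i)=e_i+e_{\overline i}$ and the commutation relations in $U(\mathfrak{gl}_{n+1}\oplus\mathfrak{gl}_n)$; one then matches the result against $Y_\alpha=E_{i,i+1}$ and $Y_\alpha+Z_\alpha$. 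The asymmetric normalization $\phi(\mathbf e_n)=2e_n$, $\phi(\mathbf f_n)=f_n$ is exactly what is needed here for the short "boundary" root, and I expect the factor of $2$ to be the reason $X_{\epsilon_n}$ (or whichever boundary root vector is relevant) has only a one-dimensional root space on the $\mathfrak{gl}_{n+1}$ side.

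Next I would handle the Cartan part, $\phi(\mathbf h_i)=h_i$ for $1\le i\le n+1$. Since the $\mathbf h_i$ are defined recursively in~(\ref{preimageofh}), this is an induction: $\mathbf h_{n+1}=\tfrac12\mathbf d_{n+1}$ maps to $h_{n+1}$ by the formula $\D_{n+1}\mapsto 2h_{n+1}$, giving the base case, and the recursion expressing $\mathbf h_i$ in terms of $\mathbf h_{i+1}$, $\mathbf d_i$, and brackets of root vectors $X_{\epsilon_i-\epsilon_{i+1}}$, $X'_{\epsilon_i-\epsilon_{i+1}}$ (and their negatives) transports under $\phi$ — once the simple-root case is in hand — to the corresponding identity $h_i = (\text{combination of } h_{i+1}, h_i+h_{\overline i}, \text{and brackets of the } Y,Z\text{'s})$ in $\mathfrak{gl}_{n+1}\oplus\mathfrak{gl}_n$, which is a routine matrix computation. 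The point worth emphasizing is that $\mathbf d_i$ maps to $h_i+h_{\overline i}$, not to $h_i$, so the $\mathbf h_i$ are genuinely new elements of the $0$-weight space, and this step is where the "difficulty" flagged in the text — that the $\mathbf d_i$ do not generate the Cartan — is actually resolved.

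Finally, for a general root $\alpha=\epsilon_i-\epsilon_j$ I would induct on the "length" $|i-j|$, using that $X_\alpha$, $X'_\alpha$ are obtained from the simple-root vectors by iterated brackets with the Cartan-adjacent generators $\mathbf e_k$ (or $\mathbf f_k$), together with the structure constants recorded in Theorem~\ref{rootvectors}'s supporting lemmas. Applying $\phi$, each bracket $[\,\cdot\,,\phi(\mathbf e_k)]=[\,\cdot\,,e_k+e_{\overline k}]$ acts diagonally on the $Y$- and $Z$-components, so $\phi(X_\alpha)$ stays of the form $Y_\alpha$ and $\phi(X'_\alpha)$ of the form $Y_\alpha+Z_\alpha$ provided the scalars are tracked correctly — this is precisely what the recursive definitions of $X_\alpha,X'_\alpha$ were engineered to guarantee. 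Once all three families of identities hold, the images $\{Y_\alpha\}\cup\{Y_\alpha+Z_\alpha\}\cup\{h_i\}$ span $U(\mathfrak{gl}_{n+1}\oplus\mathfrak{gl}_n)$ (they include all matrix units of both blocks), so $\phi$ is surjective, and comparing with the basis of $\mathfrak{gl}_N^\theta$ furnished by Propositions~\ref{stalksofweights} and~\ref{rootvectors} — which has the matching cardinality $N^2$ — forces $\phi$ to be an isomorphism, completing the proof of Theorem~\ref{basisoffixedpoint}.

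I expect the main obstacle to be bookkeeping the scalar factors in the general-root induction: the normalizations forced by $\mathbf e_n\mapsto 2e_n$ propagate into the structure constants of the iterated brackets, and one must check that the one- versus two-dimensional dichotomy of root spaces is respected at every step (equivalently, that $X_\alpha$ never acquires a spurious $Z_\alpha$-component). Verifying this compatibility — rather than any conceptual difficulty — is the real content of the argument.
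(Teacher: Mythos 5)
Your sketch follows the same inductive framework as the paper — base cases at generators, then inductions on the length of the root and on the index of $\mathbf{h}_i$, using that $\phi$ is a homomorphism — so the overall route is right. But the dependency structure you propose is backwards at one point, and this would surface as soon as you tried to flesh out the ``simple-root case.''

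You plan to settle $\phi(X_\alpha)=Y_\alpha$ for simple $\alpha=\epsilon_i-\epsilon_{i+1}$ first, and then to run the Cartan recursion ``once the simple-root case is in hand.'' But for $i<n$, $X_{\epsilon_i-\epsilon_{i+1}}=\tfrac12 X_{\mu_i-\mu_{2n+1-i}}$ is \emph{not} built from a single generator — it corresponds to a root of length $2n+1-2i$ in $\mathfrak{gl}_{2n+1}$. The paper computes its image by writing $X_{\mu_i-\mu_{2n+1-i}}=[\mathbf{e}_i, X_{\mu_{i+1}-\mu_{2n+1-i}}]$ and recognizing $X_{\mu_{i+1}-\mu_{2n+1-i}}=2(\mathbf{h}_{i+1}-\mathbf{h}_{i+2})$, so this step genuinely requires $\phi(\mathbf{h}_{i+1})$ and $\phi(\mathbf{h}_{i+2})$ to already be known. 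Conversely, the Cartan recursion does \emph{not} depend on $\phi(X_\alpha)$ for simple $\alpha$ as you suggest: expanding $\mathbf{h}_{i}-\mathbf{h}_{i+1}=\tfrac12X_{\mu_i-\mu_{2n+2-i}}=[\mathbf{e}_i,[\mathbf{h}_{i+1}-\mathbf{h}_{i+2},\mathbf{f}_i]]$ (using Corollary~\ref{independentofbrackets} to re-associate the brackets) shows it is a self-contained induction in $\mathbf{e}_i$, $\mathbf{f}_i$, and earlier $\mathbf{h}$'s. So you must place the Cartan induction \emph{before}, not after, the general $\phi(X_\alpha)$ computation; the paper's order is: $\phi(X'_\alpha)$ for all $\alpha$, then $\phi(X_\alpha)$ for the boundary roots $\epsilon_i-\epsilon_{n+1}$, then $\phi(\mathbf{h}_i)$, and only then $\phi(X_\alpha)$ for arbitrary $\alpha$. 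Once this circularity is repaired, the remainder of your sketch — including the scalar bookkeeping at the boundary root $\mathbf{e}_n\mapsto 2e_n$ and the concluding dimension count for Theorem~\ref{basisoffixedpoint} — is essentially the paper's argument.
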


Therefore, the preimage of the Cartan generators of $U(\mathfrak{gl}_N^{\theta})$ can be explicit: let $\mathbf{h}_{\overline{i}}=\mathbf{d}_i-\mathbf{h}_i$, then $\phi(\mathbf{h}_{\overline{i}})=h_{\overline{i}}$. 
Theorem~\ref{basisoffixedpoint} follows from this observation.
To this end, in Proposition~\ref{intertwinesweyl} and Proposition~\ref{intertwinesenveloping} we have the following diagram
\[
\begin{CD}
U(\mathfrak{gl}_N^{\theta}) @>>> S^j  @. \curvearrowright  @.  V^{\otimes d} @. \curvearrowleft^G  @. S^B_d\\
@V\phi VV @V\phi VV  @.  @VV\cong V @. @VV= V \\
U @>>> S @. \curvearrowright  @.  V^{\otimes d} @. \curvearrowleft^{SS}  @. S^B_d\\
\end{CD}
\]
where $S^B_d$ is the type B Weyl group, and the first row is Green's approach while the second row is via Shoji-Sakamoto. 

Unpublished work of Kujawa-Zhu established a presentation for the centralizer, say $S $, in Shoji-Sakamoto's approach: this is a quotient of the algebra $U(\mathfrak{gl}_{n+1}\oplus\mathfrak{gl}_n)$ subject to further relations on the Cartan generators (\ref{hrelations1}) - (\ref{hrelations3}). 
Via the isomorphism $\phi$, we therefore write down their counterparts, say $S^j$, in Green's approach as relations in $S$ (see Proposition~\ref{newpresentation}) as follows.

\begin{propo}[Proposition~\ref{newpresentation}]\label{main-1}
The centralizer algebra $S^j$ in Green's approach is isomorphism to the quotient algebra of $U(\mathfrak{gl}_N^{\theta})$ by the ideal generated by the following relations.
\begin{align*}
\mathbf{d}_1+\cdots+\mathbf{d}_n+\mathbf{h}_{n+1}=&d,  \\
\h_i(\h_i-1)\cdots(\h_i-d)=&0 , \hspace{.5 in} 1\leq i\leq n+1,\\
\h_{\overline{i}}(\h_{\overline{i}}-1)\cdots(\h_{\overline{i}}-d)=&0,  \hspace{.5 in} 1\leq i\leq n. 
\end{align*}
\end{propo}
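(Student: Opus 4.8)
The plan is to transfer the defining relations of $S$ (the Kujawa--Zhu presentation in Shoji--Sakamoto's approach) across the explicit isomorphism $\phi$ of Theorem~\ref{basisoffixedpoint} and Theorem~\ref{rootvectors}. Since $\phi\colon U(\mathfrak{gl}_N^{\theta})\to U(\mathfrak{gl}_{n+1}\oplus\mathfrak{gl}_n)$ is an isomorphism, it descends to an isomorphism on any quotient once we match up the relevant ideals. So the proof reduces to the identity $\phi(I)=J$, where $J$ is the ideal defining $S$ as a quotient of $U(\mathfrak{gl}_{n+1}\oplus\mathfrak{gl}_n)$ via relations (\ref{hrelations1})--(\ref{hrelations3}), and $I$ is the ideal generated by the three displayed families of relations in the statement.

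The key steps, in order: (i) Recall from the unpublished Kujawa--Zhu presentation the precise list of additional Cartan relations cutting out $S$ inside $U(\mathfrak{gl}_{n+1}\oplus\mathfrak{gl}_n)$ — these should read $h_1+\cdots+h_{n+1}=d$, together with $h_i(h_i-1)\cdots(h_i-d)=0$ for $1\le i\le n+1$ and $h_{\overline i}(h_{\overline i}-1)\cdots(h_{\overline i}-d)=0$ for $1\le i\le n$ (the eigenvalue-truncation relations reflecting that the Cartan generators act with nonnegative integer eigenvalues bounded by $d$ on $V^{\otimes d}$). (ii) Apply $\phi^{-1}$, or rather use the explicit formulas: by Theorem~\ref{rootvectors} and the comment that $\h_{\overline i}=\mathbf{d}_i-\h_i$, we have $\phi(\h_i)=h_i$ and $\phi(\h_{\overline i})=h_{\overline i}$ for $1\le i\le n$, and $\phi(\h_{n+1})=\phi(\tfrac12\mathbf{d}_{n+1})=h_{n+1}$. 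Substituting these into the Kujawa--Zhu relations yields exactly the three families in the proposition: the linear relation becomes $\mathbf{d}_1+\cdots+\mathbf{d}_n+\h_{n+1}=d$ because $\phi(\mathbf{d}_i)=h_i+h_{\overline i}$ for $i\le n$ is \emph{not} what is wanted — rather one must note $\phi(\h_1+\cdots+\h_{n+1})=h_1+\cdots+h_{n+1}$ and $\h_1+\cdots+\h_n+\h_{n+1}=\mathbf{d}_1+\cdots+\mathbf{d}_n+\h_{n+1}$ is false in general, so instead we read the relation directly as $\phi(\h_1+\cdots+\h_n+\h_{n+1})=d$, and since $\h_{n+1}=\tfrac12\mathbf{d}_{n+1}$ while the $\h_i$ for $i\le n$ are the recursively defined $0$-weight elements of (\ref{preimageofh}), we rewrite $\h_1+\cdots+\h_n$ using the relation $\mathbf{d}_i=\h_i+\h_{\overline i}$ only if the Kujawa--Zhu side is symmetric; in the form stated, the cleanest route is to keep $\h_i$ on the left and verify the displayed first relation is the $\phi$-preimage of $h_1+\cdots+h_{n+1}=d$ after the substitution $\sum_{i\le n}\h_i = \sum_{i\le n}\mathbf d_i - \sum_{i\le n}\h_{\overline i}$ together with the second block's total $\sum_{i\le n}h_{\overline i}$ being pinned down — I will check the exact bookkeeping against (\ref{preimageofh}). (iii) Conclude $\phi(I)=J$, hence $\phi$ induces $U(\mathfrak{gl}_N^{\theta})/I \xrightarrow{\ \sim\ } U(\mathfrak{gl}_{n+1}\oplus\mathfrak{gl}_n)/J = S$, and combine with the identification $S^j\cong U(\mathfrak{gl}_N^\theta)/I$ coming from the commuting square in Proposition~\ref{intertwinesweyl} and Proposition~\ref{intertwinesenveloping}, which tells us that $S^j$ is the image of $U(\mathfrak{gl}_N^\theta)$ acting on $V^{\otimes d}$ and that this image is identified under $\phi$ with $S$.

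The main obstacle I anticipate is step (ii): the generators $\h_i$ for $1\le i\le n$ are \emph{not} the naive Cartan generators $\mathbf{d}_i$ but the recursively defined elements of (\ref{preimageofh}) spanning the $0$-weight space, so one must be careful that the truncation polynomial relations $\h_i(\h_i-1)\cdots(\h_i-d)=0$ genuinely hold in $S^j$ — i.e. that $\h_i$ acts on $V^{\otimes d}$ with eigenvalues in $\{0,1,\dots,d\}$. This requires knowing the eigenvalue spectrum of the image of $\h_i$ under the representation $U(\mathfrak{gl}_N^\theta)\to S^j$, which in turn follows from $\phi(\h_i)=h_i$ and the fact that $h_i$, a genuine $\mathfrak{gl}_{n+1}$-Cartan generator, acts on $V^{\otimes d}$ through weight-counting with eigenvalues in $\{0,\dots,d\}$. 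So once Theorem~\ref{rootvectors} is invoked the spectral claim is automatic; the only real work is transcribing the Kujawa--Zhu relations (\ref{hrelations1})--(\ref{hrelations3}) and matching indices, plus confirming that the linear relation $\mathbf{d}_1+\cdots+\mathbf{d}_n+\h_{n+1}=d$ is the correct $\phi$-preimage — for which I would expand $\phi$ of the left side using $\phi(\mathbf{d}_i)=h_i+h_{\overline i}$ ($i\le n$) and $\phi(\h_{n+1})=h_{n+1}$, obtaining $\sum_{i\le n}(h_i+h_{\overline i})+h_{n+1} = \sum_{i=1}^{n+1}h_i + \sum_{i=1}^{n}h_{\overline i}$, which is exactly the total degree operator on $V^{\otimes d}$ and hence equals $d$, matching the Kujawa--Zhu linear relation. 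This last computation also serves as a consistency check that no factor of $2$ has been lost in the $\e_n\mapsto 2e_n$, $\D_{n+1}\mapsto 2h_{n+1}$ normalization.
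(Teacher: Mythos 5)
Your overall strategy---transport the Kujawa--Zhu relations (\ref{hrelations1})--(\ref{hrelations3}) across the isomorphism $\phi$ using $\phi(\h_i)=h_i$, $\phi(\h_{\overline i})=h_{\overline i}$, $\phi(\mathbf d_i)=h_i+h_{\overline i}$---is exactly the paper's (implicit) argument, and your final computation in the last paragraph, $\phi(\mathbf d_1+\cdots+\mathbf d_n+\h_{n+1})=\sum_{i=1}^{n+1}h_i+\sum_{i=1}^n h_{\overline i}$, is the correct verification that the linear relations correspond.

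However, step (i) misstates the Kujawa--Zhu linear relation: you write it as $h_1+\cdots+h_{n+1}=d$, whereas (\ref{hrelations1}) reads $h_1+\cdots+h_{n+1}+h_{\overline 1}+\cdots+h_{\overline n}=d$; the sum runs over the Cartan generators of \emph{both} blocks of $\mathfrak{gl}_{n+1}\oplus\mathfrak{gl}_n$. This slip is what causes the confused detour in step (ii) about $\h_1+\cdots+\h_{n+1}$ versus $\mathbf d_1+\cdots+\mathbf d_n+\h_{n+1}$ and the claim that certain identities ``are false in general'': with the correct form of (\ref{hrelations1}), there is nothing to reconcile, since $\phi(\mathbf d_1+\cdots+\mathbf d_n+\h_{n+1})$ lands directly on the left side of (\ref{hrelations1}). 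You eventually rediscover this in the concluding paragraph but never notice the contradiction with your stated form of the relation, so as written the argument is internally inconsistent; fixing the statement in (i) removes the entire detour and the proof becomes the clean three lines: $\phi$ is an algebra isomorphism (Theorem~\ref{basisoffixedpoint}), it carries each displayed generator of the ideal defining $S^j$ to the corresponding generator of the ideal defining $S$, hence $\phi(I)=J$ and $\phi$ descends to an isomorphism of quotients. The identification of $S^j$ (as defined by the quotient presentation) with Green's centralizer is a separate remark in the paper, not part of the proof of Proposition~\ref{newpresentation}; you can cite it but it is not needed for the $\phi(I)=J$ step.
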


Note it is widely believed that the standard quantum algebra $U_q(\mathfrak{gl}_N^{\theta})$ and Letzter's nonstandard quantization $U^i_q$ (or $U^j_q$)~\cite{Letzter97} are not isomorphic to each other.
Surprisingly, a recent work of Lai-Nakano-Xiang \cite{LNX19} shows that the quantum deformations of the centralizers $S$ and $S^j$ in  both constructions are indeed isomorphic to each other.  
It is not  immediately clear how our approach is related to~\cite{LNX19}.

\subsection{Geometric implications}

Our main results are secretly motivated by a geometric question in~\cite{BKLW18} as we explained as follows.

The quantum version of the Schur algebra has a geometric construction by Beilinson-Lusztig-MacPherson \cite{BLM}. If $\mathcal{Y}$ is the variety of complete flags in a $d$-dimensional vector space over a finite field $\mathbb{F}_q$, and $\mathcal{X}$ is the variety of $N$-step flags in $\mathbb{F}_q^d$, then the Schur algebra can be realized geometrically: the Hecke algebra of type A, as a deformation of the group algebra of $S_d$, can be now realized as $GL_d$-invariant functions on the double flag variety $\mathcal{Y}\times \mathcal{Y}$. The $q$-Schur algebra of type A now consists of $GL_d$-invariant functions on the variety $\mathcal{X}\times \mathcal{X}$. The centralizing action of the $q$-Schur algebra and Hecke algebra on the $q$-tensor space, can be realized as a convolution between functions on $\mathcal{X}\times \mathcal{X}$, $\mathcal{X}\times \mathcal{Y}$ and $\mathcal{Y}\times \mathcal{Y}$.

Meanwhile, Bao-Kujawa-Li-Wang \cite{BKLW18} gave a geometric construction of a quantization of the type B q-Schur algebra $S^j_q$. In particular, the isotropic flags $\mathcal{X}_B$ and $\mathcal{Y}_B$ yield the type B Hecke algebra, similar to type A. 
In the quantum case, the quantum algebra behind the algebra of $O_N$-invariant functions on $\mathcal{X}_B\times \mathcal{X}_B$, denoted $U^j_q$ or $U^i_q$ depending on a parity, has a presentation using Chevalley generators, and can be imbedded inside the type A quantum group. Similar to type A, this algebra surjects onto the type B q-Schur algebra $S^j_q$. This algebra has been studied intensively in \cite{BW18,LW18}, and it admits a canonical basis.
It is an open problem to provide a presentation of $S^j_q$, see~\cite[Section~3.5]{BKLW18}.
It is know that $S^j$ is the specialization of $S^j_q$ at $q=1$.

\begin{propo}
Proposition~\ref{main-1} answers affirmatively the presentation problem in~\cite[Section~3.5]{BKLW18} when $q$ is specialized at $1$ and further disproves the naive expectation in {\it loc. cit.}
\end{propo}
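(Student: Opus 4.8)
The plan is to unwind the chain of identifications already built in the paper and observe that the final statement is essentially a bookkeeping corollary rather than a new computation. First I would recall the precise formulation of the presentation problem posed in \cite[Section~3.5]{BKLW18}: that reference asks for a presentation of the type~B $q$-Schur algebra $S^j_q$ by generators and relations, and offers a ``naive expectation'' that such a presentation should be obtainable simply by adjoining to the Chevalley presentation of $U^j_q$ the expected Cartan-type relations (the analogues of the classical idempotent/divided-power relations that cut $U(\mathfrak{gl}_N)$ down to its type~A Schur algebra). So the task splits into two halves: (a) exhibit an affirmative presentation at $q=1$, and (b) show that this presentation is \emph{not} the one the naive expectation predicts.

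For part (a), I would invoke the fact stated in the excerpt that $S^j$ is the $q=1$ specialization of $S^j_q$, together with Proposition~\ref{main-1}, which already gives $S^j$ as the quotient of $U(\mathfrak{gl}_N^\theta)$ by the explicit ideal generated by $\mathbf{d}_1+\cdots+\mathbf{d}_n+\mathbf{h}_{n+1}=d$ and the two families of polynomial identities $\prod_{k=0}^d(\mathbf{h}_i-k)=0$, $\prod_{k=0}^d(\mathbf{h}_{\bar i}-k)=0$. Since Proposition~\ref{main1} (= Proposition~\ref{presentation}, \ref{presentation2}) supplies a presentation of $U(\mathfrak{gl}_N^\theta)$ itself by generators $\mathbf{e}_i,\mathbf{f}_i,\mathbf{d}_i$ and relations (\ref{d})--(\ref{fen}), composing the two gives an honest presentation of $S^j$ by generators and relations, answering the problem at $q=1$. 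The only subtlety is that the extra relations are phrased in terms of the elements $\mathbf{h}_i$, $\mathbf{h}_{\bar i}$, which are not generators but recursively defined elements of the $0$-weight space (by Proposition~\ref{stalksofweights} and (\ref{preimageofh})); I would point out that substituting their defining expressions in the generators $\mathbf{d}_i$ (and lower-rank $\mathbf{e},\mathbf{f}$ brackets) turns every such relation into a genuine relation among the $\mathbf{e}_i,\mathbf{f}_i,\mathbf{d}_i$, so the presentation is self-contained.

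For part (b), the disproof of the naive expectation, I would argue structurally rather than by a long computation. The naive expectation of \cite[Section~3.5]{BKLW18} is modeled on type~A, where the Schur algebra relations only constrain the Cartan generators through polynomial identities in the \emph{obvious} Cartan subalgebra (the one spanned by the Chevalley $H_i$'s, or their $q$-analogues). Here the key phenomenon, already emphasized in the discussion preceding Proposition~\ref{stalksofweights}, is that the commutative subalgebra generated by the $\mathbf{d}_i$ is \emph{strictly smaller} than the Cartan subalgebra of $U(\mathfrak{gl}_N^\theta)$: the $0$-weight space has dimension $N=2n+1$ but only $n+1$ of the $\mathbf{d}_i$'s lie in the span, the remaining $n$ dimensions being the genuinely new elements $\mathbf{h}_1,\dots,\mathbf{h}_n$. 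Consequently the Schur-algebra relations in Proposition~\ref{main-1} involve the $\mathbf{h}_i$ and $\mathbf{h}_{\bar i}=\mathbf{d}_i-\mathbf{h}_i$, which are \emph{not} polynomials in the $\mathbf{d}_i$ alone, so no collection of polynomial relations in the naive Cartan generators $\mathbf{d}_i$ can cut out $S^j$. To make this rigorous I would verify, using the explicit isomorphism $\phi$ of Theorem~\ref{basisoffixedpoint} and Theorem~\ref{rootvectors} (under which $\mathbf{h}_i\mapsto h_i$ and $\mathbf{h}_{\bar i}\mapsto h_{\bar i}$, the genuine Chevalley Cartan elements of the two blocks $\mathfrak{gl}_{n+1}$ and $\mathfrak{gl}_n$), that the ideal of Proposition~\ref{main-1} is \emph{not} generated by any set of elements lying in the subalgebra $\mathbb{C}[\mathbf{d}_1,\dots,\mathbf{d}_{n+1}]$ together with the Serre-type relations — e.g.\ by exhibiting a one-dimensional representation or a weight-space dimension count in $S^j$ that is incompatible with the naive quotient. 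This comparison — pinning down exactly what the ``naive expectation'' asserts and producing a concrete invariant separating it from the true answer — is the main obstacle; the rest is assembling results already proved.
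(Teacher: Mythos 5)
Your reconstruction matches what the paper intends: in the source this proposition is stated as an observation in the introduction with no written proof, and the implicit reasoning is exactly the two halves you describe. For (a) the chain is Proposition~\ref{presentation}/\ref{presentation2} (presentation of $U(\mathfrak{gl}_N^{\theta})$) $\Rightarrow$ Proposition~\ref{newpresentation} (quotient by the explicit ideal) $\Rightarrow$ presentation of $S^j = S^j_q|_{q=1}$, and your remark that the $\mathbf h_i,\mathbf h_{\bar i}$ can be unraveled back into $\mathbf e_i,\mathbf f_i,\mathbf d_i$ is precisely the ``Note'' the paper places after (\ref{hnobar}). For (b) the structural point you single out — that $\operatorname{span}\{\mathbf d_i\}$ is strictly smaller than the Cartan of $\mathfrak g^{\theta}$, which is what forces the new elements $\mathbf h_i$ into the presentation — is the same point the paper flags in the introduction.

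You are right, though, to call out the residual gap in (b): saying ``the displayed relations involve $\mathbf h_i$'' does not by itself rule out an alternative generating set for the same ideal that lives in $\mathbb C[\mathbf d_1,\dots,\mathbf d_{n+1}]$. The clean way to close this is the dimension/weight-space comparison you allude to. Under $\phi$ the tensor space $V^{\otimes d}$ decomposes with weights recorded by the full Cartan $(h_1,\dots,h_{n+1},h_{\bar 1},\dots,h_{\bar n})$, i.e.\ by a pair of compositions of lengths $n+1$ and $n$, while the $\mathbf d_i$-eigenvalues only see the $n+1$ sums $h_i+h_{\bar i}$ (and $2h_{n+1}$). Two distinct weight spaces of $V^{\otimes d}$ with the same $\mathbf d_i$-eigenvalues are therefore not separated by polynomials in the $\mathbf d_i$, so any quotient by such polynomials has a larger image of the commutative subalgebra than $S^j$ allows; hence the naive quotient properly surjects onto $S^j$. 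Spelling this out (even in a rank-one example, as in the $n=1$ example following Proposition~\ref{newpresentation}) would finish (b); the rest of your proposal is correct and, as you say, mostly bookkeeping of results already proved in Sections~2 and~3.
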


In addition, the presentation in Proposition~\ref{main-1} provides insights into a potential full list of defining relations for $S^j_q$, which has yet to be explored; see~\cite{Li19} where the rank one case is settled.

\subsection*{Acknowledgment} Y. Li was partially supported by the NSF grant DMS-1801915. We thank the referee for their insightful comments towards several improvements of this article.

\tableofcontents

\section{The algebra $U^j$}
In this section, we aim to define the fixed point subalgebra of the general linear Lie algebra $\mathfrak{gl}_N$, where $N=2n+1$ is a positive odd integer. We will give a presentation of its universal enveloping algebra $U^j$ in nonstandard generators, and construct an explicit homomorphism from $U^j$ to the two-block Levi Lie subalgebra of $U(\mathfrak{gl}_N)$, the universal enveloping algebra of $\mathfrak{gl}_N$. This homomorphism turns out to be in fact an isomorphism, with an explicit description of the inverse image of the each Chevalley generator. In the end, we will describe the associated type B Schur algebra and give a presentation in terms of generators and relations.
\subsection{The fixed point subalgebra} \label{S2.1}
In this section we introduce the fixed point subalgebra of $\mathfrak{gl}_{2n+1}(\mathbb{C})$ with respect to an involution on its Dynkin diagram. We will further give a presentation of this fixed point subalgebra. Let $\Gamma$ be the Dynkin diagram  associated to the Lie algebra $\mathfrak{g}=\mathfrak{gl}_{2n+1}(\mathbb{C})$. It has $2n$ vertices $1,2,\dots,2n$ and a graph automorphism $\tau \in \operatorname{Aut}(\Gamma)$ such that $\tau(i)=2n+1-i$. Let $E_1,\dots,E_{2n},F_{1},\dots,F_{2n},H_1,\dots,H_{2n+1}$ be the Chevalley generators of $\mathfrak{g}$ and $\mathfrak{h}\subset \mathfrak{g}$ be the Cartan subalgebra spanned by $H_i (1\leq i\leq 2n+1)$. In other words, $E_i=E_{i,i+1}$, $F_i=F_{i+1,i}$, $H_i=E_{i,i}$, where $E_{i,j}$ is the matrix unit with a single nonzero entry $1$ in the $(i,j)$-position. The automorphism $\tau$ induces an automorphism of the Lie algebra $\mathfrak{gl}_{2n+1}(\mathbb{C})$, which can be extended to an automorphism $\theta$ of $\mathfrak{g}=\mathfrak{gl}_{2n+1}(\mathbb{C})$ via 
\begin{align*}
\theta(E_{i})=F_{\tau(i)}, \hspace{.2 in}\theta(F_{i})=E_{\tau(i)},\hspace{.2 in}\theta(H_{i})= H_{\tau(i)+1}.
\end{align*}
In fact, for any subset $X$ of the vertices in $\Gamma$, and any graph automorphism $\tau$ on $\Gamma$ which leaves $X$ invariant, Kolb \cite[Theorem 2.5]{Kolb14} defined an automorphism $\theta(X,\tau)$ of the associated Lie algebra. When $X=\emptyset$ and $\tau$ is taken as above, the automorphism $\theta(X,\tau)$ specializes to the automorphism mentioned above.

Let
\begin{align*}
\mathfrak{g}^{\theta}=\{x\in \mathfrak{g}\hspace{.1 in} | \hspace{.1 in}x=\theta(x)\}, \hspace{.2 in}
\mathfrak{h}^{\theta}= \{h\in \mathfrak{h} \hspace{.1 in} | \hspace{.1 in} h=\theta(h)\}.
\end{align*}
The result in \cite[Lemma 2.8]{Kolb14} specializes to the following
\begin{lemma}\cite[Lemma 2.8]{Kolb14}\label{Kolb}
The subalgebra $\mathfrak{g}^{\theta}$ is generated by the following elements in $\mathfrak{h}^{\theta}$ and 
\begin{align*}
\{F_i+\theta(F_i) \hspace{.1 in} | \hspace{.1 in}1\leq i\leq 2n\}. 
\end{align*}
\end{lemma}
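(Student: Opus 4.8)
The plan is to deduce Lemma~\ref{Kolb} directly from Kolb's general result \cite[Lemma 2.8]{Kolb14} by unwinding what the general statement says in the special case $X=\emptyset$ and $\tau(i)=2n+1-i$. First I would recall the setup in \cite{Kolb14}: for a diagram automorphism $\tau$ preserving a subset $X$, the subalgebra $\mathfrak{g}^{\theta(X,\tau)}$ is generated by $\mathfrak{m}_X$ (the Levi-type piece attached to $X$), together with the elements $F_i + \theta(F_i)$ for $i$ ranging over the vertices \emph{not} in $X$, and a distinguished part of $\mathfrak{h}$ fixed by $\theta$. Specializing to $X=\emptyset$, the Levi piece $\mathfrak{m}_X$ collapses to (a subalgebra of) the fixed Cartan $\mathfrak{h}^{\theta}$, and the index set for the $F_i+\theta(F_i)$ becomes all of $\{1,\dots,2n\}$; this is exactly the asserted generating set. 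So the core of the argument is simply a careful translation of the indexing conventions, checking that our $\theta$ agrees with Kolb's $\theta(\emptyset,\tau)$ under the identification of the Chevalley generators of $\mathfrak{gl}_{2n+1}(\mathbb{C})$.

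The one genuine point to verify is the compatibility of the conventions for the action of $\theta$ on the Cartan. In the excerpt we have defined $\theta(H_i) = H_{\tau(i)+1}$, whereas a naive reading of "the automorphism induced by the diagram automorphism" might suggest $\theta(H_i) = H_{\tau(i)}$; the shift by $1$ reflects the passage from $\mathfrak{sl}$-type indexing of simple coroots to the matrix-unit indexing $H_i = E_{ii}$ in $\mathfrak{gl}_{2n+1}$, together with the fact that $\tau$ acts on the $2n$ \emph{edges} of the diagram while the $H_i$ are indexed by the $2n+1$ \emph{nodes} of the $A$-type configuration of matrix units. I would spell out that $E_i - E_{i+1,i+1} = H_i - H_{i+1}$ maps under $\theta$ to $H_{\tau(i)+1} - H_{\tau(i+1)+1} = H_{\tau(i)+1} - H_{\tau(i)}$, matching $-\alpha_{\tau(i)}$ as required for a diagram automorphism, and note that the central element $\sum_i H_i$ is fixed; this confirms that $\theta$ is the correct lift and that Kolb's hypotheses apply.

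Finally I would observe that the two explicit generating families in the statement are genuinely of the advertised shape: $\mathfrak{h}^{\theta}$ consists of the diagonal matrices fixed by the order-two permutation $i \mapsto 2n+2-i$ on the coordinates of $\mathfrak{gl}_{2n+1}$ (this is the permutation induced on matrix units by $\theta$, consistent with the Cartan formula above), and $F_i + \theta(F_i) = F_{i+1,i} + E_{\tau(i)+1,\tau(i)}$ is the simplest $\theta$-fixed combination attached to the $i$-th simple root. The main — and really only — obstacle is bookkeeping: making sure that "$X=\emptyset$" is the right specialization, that no extra fixed Cartan generators beyond $\mathfrak{h}^{\theta}$ are needed (they are not, since with $X=\emptyset$ Kolb's $\mathfrak{m}_X \cap \mathfrak{h}$ exhausts $\mathfrak{h}^\theta$), and that the index set of the $F_i+\theta(F_i)$ is all of $\{1,\dots,2n\}$ rather than a proper subset. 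Once the dictionary between the two notational systems is fixed, the lemma is immediate from \cite[Lemma 2.8]{Kolb14}.
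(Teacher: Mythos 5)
Your proposal is correct and takes exactly the same route as the paper: the paper provides no proof of this lemma beyond the citation to Kolb's Lemma 2.8 and the preceding remark that $\theta$ is $\theta(\emptyset,\tau)$ specialized, so the entire content is the dictionary you spell out. Your additional bookkeeping on the Cartan action (the shift $\theta(H_i)=H_{\tau(i)+1}$ coming from the $\mathfrak{gl}$ versus $\mathfrak{sl}$ indexing) is exactly the point one should check, and it checks out.
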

We now give a description of generators for $\mathfrak{h}^{\theta}$.
\begin{lemma}\label{hfixed}
The subalgebra $\mathfrak{h}^{\theta}$ is spanned by $H_i+H_{2n+2-i}$ for $1\leq i\leq n+1$.
\end{lemma}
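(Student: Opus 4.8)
The plan is to work inside the Cartan subalgebra $\mathfrak{h}$ of diagonal matrices, on which the involution $\theta$ acts explicitly by the given formula $\theta(H_i) = H_{\tau(i)+1}$ with $\tau(i) = 2n+1-i$. First I would rewrite this action purely in terms of the matrix units $H_i = E_{i,i}$: since $\tau(i)+1 = 2n+2-i$, we have $\theta(E_{i,i}) = E_{2n+2-i,\,2n+2-i}$ for $1 \le i \le 2n+1$. Thus $\theta$ simply permutes the coordinate functionals on $\mathfrak{h}$ by the order-two permutation $\sigma: i \mapsto 2n+2-i$ of $\{1,\dots,2n+1\}$, which has the single fixed point $i = n+1$ and pairs up $\{i, 2n+2-i\}$ for $i \ne n+1$.

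Next I would observe that $\mathfrak{h}^\theta$ is exactly the subspace of $\mathrm{span}\{H_1,\dots,H_{2n+1}\}$ fixed by this coordinate permutation. A vector $\sum_i c_i H_i$ is fixed iff $c_i = c_{\sigma(i)}$ for all $i$, i.e. iff the coefficients are constant on the $\sigma$-orbits. The orbits are the $n$ two-element sets $\{i, 2n+2-i\}$ for $1 \le i \le n$, together with the singleton $\{n+1\}$. A basis for the fixed space indexed by orbits is therefore $\{H_i + H_{2n+2-i} : 1 \le i \le n\}$ together with $H_{n+1}$; and $H_{n+1} = H_{n+1} + H_{2n+2-(n+1)}$ read with the convention that the singleton orbit contributes $2H_{n+1}$, or more cleanly, $H_{n+1}$ is the $i=n+1$ case of $\tfrac12(H_i + H_{2n+2-i})$. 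To match the statement literally I would simply note that for $i = n+1$ the element $H_i + H_{2n+2-i} = 2H_{n+1}$, which spans the same line as $H_{n+1}$, so the collection $\{H_i + H_{2n+2-i} : 1 \le i \le n+1\}$ spans $\mathfrak{h}^\theta$ (with the $i=n+1$ term being redundant up to scalar but still a legitimate spanning element).

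Finally I would verify that these $n+1$ elements are linearly independent — immediate, since for distinct $i$ in $\{1,\dots,n+1\}$ the index sets $\{i, 2n+2-i\}$ are pairwise disjoint, so the corresponding $H_i + H_{2n+2-i}$ involve disjoint matrix units — which together with the spanning statement gives that they form a basis and in particular $\dim \mathfrak{h}^\theta = n+1$. There is essentially no obstacle here: the only thing to be careful about is the boundary index $i = n+1$, where $2n+2-i = n+1$ coincides with $i$, so that the "pair" degenerates to a single coordinate; handling that case correctly (and consistently with the indexing in the rest of the paper, where $\mathbf{d}_i = H_i - H_{\theta(i)}$ and $\mathbf{h}_{n+1} = \tfrac12 \mathbf{d}_{n+1}$ later appear) is the one place where a sign or factor of $2$ could slip in.
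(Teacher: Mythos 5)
Your argument is correct and is essentially the same as the paper's: both work directly inside $\mathfrak{h}$, expand a general $h=\sum_i c_i H_i$, use that $\theta$ permutes the $H_i$ by $i\mapsto 2n+2-i$, and deduce $c_i=c_{2n+2-i}$ so that $h$ lies in the span of the $H_i+H_{2n+2-i}$. Your treatment is a bit more careful at the boundary index $i=n+1$ (where the ``pair'' degenerates to $2H_{n+1}$) and adds the linear-independence check, but the underlying idea is identical.
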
 
\begin{proof}
Let $h=c_1H_1+\cdots +c_{2n+1}H_{2n+1}$ be fixed by $\theta$, then by the definition of $\theta$
\begin{align*}
c_1H_1+\cdots +c_{2n+1}H_{2n+1}=c_{2n+1}H_1+\cdots+ c_{1}H_{2n+1}
\end{align*}
and $c_i=c_{2n+1-i}$, hence $h$ is a linear combination of $H_i+H_{2n+2-i}$.
\end{proof}
\begin{rem} As we will show later in Theorem~\ref{basisoffixedpoint}, there is a larger algebra containing $\mathfrak{h}^{\theta}$ which is abelian. Therefore $\mathfrak{h}^{\theta}$  is not a Cartan subalgebra of $\mathfrak{g}^{\theta}$.
\end{rem} 
 
 Let $U^j=U(\mathfrak{g}^{\theta})$ be the universal enveloping algebra of $\mathfrak{g}^{\theta}$. By the two lemmas above, $U^j$ is generated by elements
\begin{align}
\e_i & = E_i+F_{2n+1-i} ,\hspace{.5 in}1\leq i\leq n, \label{gen1}\\
\f_i & = F_i+E_{2n+1-i}, \hspace{.5 in}1\leq i\leq n,\label{gen2}\\
\D_i & = H_i+H_{2n+2-i}, \hspace{.5 in}1\leq i\leq n, \label{gen3}\\
\D_{n+1} & = 2 H_{n+1}. \label{gen4}
\end{align} 
 
We aim to give a presentation of $U^j$. The following relations are known to be true for the quantum case in \cite[Lemma 2.2]{Letzter97}.
\begin{prop}\label{relations}\label{presentation}
The algebra $U^j$ is generated by $\e_1,\dots,\e_n$, $\f_1,\dots,\f_n$, $\D_1,\dots,\D_{n+1}$ subject to the following relations
\begin{align}
\D_i\D_j&=\D_j\D_i, \label{d}\\ 
[\D_a,\e_j]&=(-\delta_{a,j+1}-\delta_{2n+2-a,j+1}+\delta_{a,j})\e_j, \label{de}\\
[\D_a,\f_j]&=(-\delta_{a,j}+\delta_{a,j+1}+\delta_{2n+2-a,j+1})\f_j, \label{df}\\
\e_i\f_j-\f_j\e_i&=\delta_{ij}(\D_i-\D_{i+1}) \hspace{.5 in}i<n, \label{ef}\\
\e_i^2\e_j-2\e_i\e_j\e_i+\e_j\e_i^2&=0\hspace{.5 in} |i-j|=1, \label{eYB}\\ 
\f_i^2\f_j-2\f_i\f_j\f_i+\f_j\f_i^2&=0 \hspace{.5 in} |i-j|=1, \label{fYB}\\
\e_i\e_j&=\e_j\e_i\hspace{.5 in} |i-j|>1, \label{ee}\\
\f_i\f_j&=\f_j\e_i\hspace{.5 in} |i-j|>1, \label{ff}\\
\e_n^2\f_n-2\e_n\f_n\e_n+\f_n\e_n^2&=-4\e_n, \label{efn}\\
\f_n^2\e_n-2\f_n\e_n\f_n+\e_n\f_n^2&=-4\f_n. \label{fen}
\end{align}
\end{prop}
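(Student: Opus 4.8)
The plan is to prove Proposition~\ref{presentation} in two stages: first verify that all the listed relations (\ref{d})--(\ref{fen}) hold in $U^j$, and then show that any algebra $\tilde U$ generated by abstract symbols $\e_i,\f_i,\D_i$ subject to exactly these relations surjects onto $U^j$ with the surjection being an isomorphism. The first stage is a direct computation inside $U(\mathfrak{gl}_{2n+1})$: substitute the definitions (\ref{gen1})--(\ref{gen4}) and use the standard $\mathfrak{gl}_{2n+1}$ commutation rules $[E_{ij},E_{k\ell}]=\delta_{jk}E_{i\ell}-\delta_{\ell i}E_{kj}$. For instance, (\ref{ef}) with $i=j<n$ unpacks to $[E_i+F_{2n+1-i},\,F_i+E_{2n+1-i}]$; since $i\ne 2n+1-i$ and the indices $i,2n+1-i$ are non-adjacent in the Dynkin diagram when $i<n$, the cross terms vanish and one is left with $[E_i,F_i]+[F_{2n+1-i},E_{2n+1-i}]=H_i-H_{i+1}+H_{2n+1-i}-H_{2n+2-i}=\D_i-\D_{i+1}$. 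The one genuinely different computation is at the ``folded'' node $i=n$: here $E_n$ and $F_{2n+1-n}=F_{n+1}$ are adjacent, so $\e_n=E_n+F_{n+1}$ has self-interacting summands, which is exactly why (\ref{efn}) and (\ref{fen}) acquire the inhomogeneous right-hand side $-4\e_n$ (resp.\ $-4\f_n$) rather than being homogeneous Serre relations; I would carry out this bracket explicitly to pin down the constant $-4$. The Serre-type relations (\ref{eYB})--(\ref{ff}) at non-folded nodes follow because the relevant summands there generate a copy of $\mathfrak{sl}_3$ or commute outright.

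For the second stage, let $\tilde U$ be the abstract algebra on the given generators and relations, and let $\pi:\tilde U\twoheadrightarrow U^j$ be the canonical surjection (well-defined by stage one). To prove injectivity I would exhibit a spanning set of $\tilde U$ of the correct size, or rather compare with the known PBW basis of $U^j=U(\mathfrak{g}^\theta)$: since $\dim\mathfrak{g}^\theta$ is computed from the $(-1)$-eigenspace decomposition of $\theta$ on $\mathfrak{gl}_{2n+1}$, and $U(\mathfrak{g}^\theta)$ has a PBW basis indexed by monomials in $\dim\mathfrak{g}^\theta$ ordered root-and-Cartan vectors, it suffices to produce in $\tilde U$ a spanning set indexed by the same monomials. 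Concretely I would (a) use the $\D$-relations to move all $\D_i$ to the right, (b) use (\ref{ee})--(\ref{ff}) and the Serre relations (\ref{eYB})--(\ref{fYB}) together with the ``cross'' commutators to straighten products of $\e$'s and $\f$'s into a fixed normal order, generating along the way the non-simple root vectors as iterated brackets $[\e_i,\e_{i+1}]$ etc., and (c) handle the folded node by noting that the relations (\ref{efn})--(\ref{fen}) are precisely enough to straighten monomials involving $\e_n,\f_n$ past each other, producing the extra Cartan-like element that (as the Remark after Lemma~\ref{hfixed} flags) enlarges the abelian subalgebra beyond $\mathfrak{h}^\theta$. This mirrors Letzter's argument in \cite{Letzter97} for the quantum case, specialized at $q=1$.

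The main obstacle is stage two, and within it the folded node $i=n$: unlike the ordinary Serre presentation, where the relations visibly give a triangular straightening algorithm, here the inhomogeneous relations (\ref{efn})--(\ref{fen}) must be shown to suffice for straightening \emph{all} words in $\e_n,\f_n$ (and their interaction with $\e_{n-1},\f_{n-1}$) without producing new, unaccounted-for elements. Equivalently, one must verify that the obvious filtration on $\tilde U$ has associated graded equal to (a quotient of) a polynomial-type algebra of the expected dimension, so that the Diamond Lemma / PBW-straightening closes up. I would set this up by choosing a monomial order, listing the overlap ambiguities coming from (\ref{eYB})--(\ref{fYB}) and (\ref{efn})--(\ref{fen}), and checking they all resolve — the nontrivial ambiguities being the triple overlaps at the folded node. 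A secondary, more bookkeeping-type obstacle is simply confirming that the recursively-defined Cartan elements $\h_i$ (cf.\ (\ref{preimageofh}), used later) together with $\D_1,\dots,\D_{n+1}$ account exactly for the $N$-dimensional zero-weight space, so that no dimension is lost or gained; this is where stage two connects to Proposition~\ref{stalksofweights}.
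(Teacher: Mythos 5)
Your first stage (verifying the relations by direct computation in $U(\mathfrak{gl}_{2n+1})$) agrees with the paper. Your second stage takes a genuinely different route, and it is worth contrasting the two. The paper sidesteps the Diamond Lemma entirely by exploiting the embedding $U^j\subset U(\mathfrak{gl}_{2n+1})$: after relabelling $\f_i=\e_{2n+1-i}$, it places the \emph{asymmetric} filtration $\deg E_i=1$, $\deg F_i=0$, $\deg H_i=0$ on $U(\mathfrak{gl}_{2n+1})$. Under this filtration the leading term of every $\e_i$ and every $\f_i$ is a single positive Chevalley generator $E$, so a monomial $\e_{i_1}\cdots\e_{i_k}\D^{\mathbf s}$ maps to $E_{i_1}\cdots E_{i_k}\phi(\D^{\mathbf s})$ plus lower-degree terms, and the leading terms of the relations (\ref{ef})--(\ref{fen}) are \emph{exactly} the Serre relations for $\mathfrak n^+\subset\mathfrak{gl}_{2n+1}$. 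A minimal-degree element of $\ker\phi\smallsetminus I$ then has its top part expressible via these Serre relations, hence via (\ref{eYB})--(\ref{fen}), producing a lower-degree counterexample — contradiction. What this buys over your plan is that the straightening/ambiguity-resolution step you correctly flag as the crux (especially the triple overlaps at the folded node $n$) is replaced by an appeal to the known PBW theorem for $\mathfrak{gl}_{2n+1}$, where all overlaps are already resolved. Your Diamond-Lemma / spanning-set approach is not wrong, but you leave the hard part as a to-do; and it is also more entangled with the later root-vector machinery (Theorems~\ref{span} and~\ref{rootvectors}), which the paper's presentation proof does not need. Two small corrections: $\mathfrak g^\theta$ is the $(+1)$-eigenspace of $\theta$, not $(-1)$; and the recursive Cartan elements $\h_i$ and the zero-weight-space count play no role in the proof of this proposition (they are used later for Theorem~\ref{basisoffixedpoint} and Proposition~\ref{newpresentation}).
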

\begin{proof}
These relations hold by a straightforward calculation. 

Let $B$ be the algebra generated by ${\e_1},\dots,\e_n$, $\f_1,\dots,\f_n$, $\D_1,\dots,\D_{n+1}$ subject to Relations (\ref{d}), (\ref{de}) and (\ref{df}), then $B$ has a basis  $\{w\D^{\mathbf{s}}\}$ where $w$ is a word in $\e_1,\dots,\e_n$, $\f_1,\dots,\f_n$, and $\mathbf{s}=(s_1,\dots,s_{n+1})\in \mathbb{Z}_{\geq 0}^{n+1}$ with $\D^{\mathbf{s}}=\D_1^{s_1}\cdots \D_{n+1}^{s_{n+1}}$. Define a filtration on $B$ by declaring $\operatorname{deg} \e_i = \operatorname{deg} \f_i=1$ and  $\operatorname{deg} \D_i=0$, then any element in $B$ can be written as a sum of homogeneous parts, and declare its degree to be the highest among its homogeneous parts. On the other hand, the algebra $U(\mathfrak{gl}_{2n+1}(\mathbb{C}))$ has a PBW-basis and admits a filtration. The degree of each monomials is defined via $\operatorname{deg}E_i=1$, $\operatorname{deg}F_i=0$, $\operatorname{deg}H_i=0$. As a subalgebra of $U(\mathfrak{gl}_{2n+1}(\mathbb{C}))$, $U^j$ inherits a filtration where degrees on monomials are defined similarly.

Let $I$ be the ideal in $B$ generated by Relations (\ref{ef})--(\ref{fen}). The previous lemma implies that there is a well defined surjection $\phi: B\to U^j$ with $I$ lying in the kernel. Now we claim that the kernel is exactly $I$. Assume by contradiction that 
\begin{align*}
x=\displaystyle\sum_{w,\textbf{s}}a_{w,\textbf{s}}w\D^{\textbf{s}}
\end{align*}
 is in the kernel of $\phi$, $x\not\in I$, $a_{w,\textbf{s}}\in \mathbb{C}$ and $x$ is of smallest degree in $B$. For notation purposes rewrite $\f_i=\e_{2n+1-i}$ for $1\leq i\leq n$. By the definition of $\phi$, if $w=\e_{i_1}\e_{i_2}\dots \e_{i_k}$, then
\begin{align*}
\phi(a_{w,\textbf{s}}w\D^{{\textbf{s}}})=a_{\textbf{s}} E_{i_1}E_{i_2}\cdots E_{i_k}\phi(\D^{{\textbf{s}}}) + \text{ lower terms in }U^j.
\end{align*}
In the above case, let $E^w=E_{i_1}E_{i_2}\cdots E_{i_k}$ and $I_0$ be the index set of $(w,\mathbf{s})$ for the top degree terms in $x$, then by degree considerations, 
\begin{align*}
\sum_{\textbf{s}\in I_0}a_{w,\textbf{s}} E^{w}\phi(\D^{{\textbf{s}}}) =0 \in U(\mathfrak{gl}_{2n+1}(\mathbb{C})).
\end{align*}
Moreover, based on the PBW basis of $U(\mathfrak{gl}_{2n+1}(\mathbb{C}))$ and the fact that $\phi(\D^{\mathbf{s}_1})=\phi(\D^{\mathbf{s}_2})$ if and only if $\mathbf{s}_1=\mathbf{s}_2$, for each fixed $\mathbf{s}$, we have
\begin{align*}
\sum_{w:a_{w,\mathbf{s}}\neq 0}a_{w,\textbf{s}} E^{w} =0 \in U(\mathfrak{gl}_{2n+1}(\mathbb{C}))
\end{align*}
and the above element is in the ideal generated by the Serre relations for the positive part of $U(\mathfrak{gl}_{2n+1})$. Comparing these relations and Relations (\ref{eYB})--(\ref{fen}), one can use the corresponding relations to rewrite 
\begin{align*}
\displaystyle\sum_{w:a_{w,\mathbf{s}}\neq 0}a_{w,\textbf{s}} w
\end{align*}
 as an element of strictly less degree, resulting an element $x'$ in the kernel of $\phi$, $x'\not\in I$, whose degree is less than $x$, contradicting the choice of $x$.
\end{proof}

\begin{rem}
For more details, see the proof of Lemma~\ref{relations2} in the next section.
\end{rem}

\subsection{A homomorphism between $U^j$ and $U(\mathfrak{gl}_{n+1}\oplus\mathfrak{gl}_n)$}
\begin{lemma}\label{phi}
There is a well-defined algebra homomorphism $\psi : U^j \to U$ given by 
\begin{align}
\psi(\e_i) &= f_i+f_{\overline{i}}  \hspace{.3 in}\psi(\f_i)= e_i+e_{\overline{i}}  \hspace{.3 in} (i<n),  \label{hombegin} \\
\psi(\e_n) &= 2f_n  \hspace{.5 in}\psi(\f_n)=e_n,  \\ 
\psi(\D_i) &=  -h_i - h_{\overline{i}} \hspace{.1 in}  (1\leq i\leq n) \hspace{.3 in} 
\psi(\D_{n+1})= -2h_{n+1}.\label{homend}
\end{align}
\end{lemma}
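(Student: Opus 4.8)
The plan is to verify that $\psi$ respects each of the defining relations (\ref{d})--(\ref{fen}) of $U^j$ established in Proposition~\ref{presentation}; since $U^j$ is presented by those generators and relations, a map defined on generators extends to an algebra homomorphism precisely when the images satisfy all the relations. So it suffices to substitute the proposed images into each relation and check the identity inside $U = U(\mathfrak{gl}_{n+1} \oplus \mathfrak{gl}_n)$. I would organize the check by the structure of $U$ as a direct sum: the barred and unbarred Chevalley generators commute with one another, so for $i, j < n$ a relation involving only $\e_i, \f_i, \D_i$ with indices $< n$ splits into the corresponding relation in $\mathfrak{gl}_{n+1}$ (for the unbarred part) plus the same relation in $\mathfrak{gl}_n$ (for the barred part). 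Thus (\ref{d}), the Serre relations (\ref{eYB})--(\ref{ff}), and the Cartan commutators (\ref{de})--(\ref{df}) and bracket (\ref{ef}) for $i < n$ all reduce to the standard $\mathfrak{gl}$-relations in each block, with the one bookkeeping point that $\psi$ sends $\e_i \mapsto f_i + f_{\bar i}$ and $\f_i \mapsto e_i + e_{\bar i}$, i.e. it swaps the roles of raising and lowering operators and introduces a sign on the Cartan elements — so one must confirm the signs in (\ref{de}), (\ref{df}) come out right after this swap (they do, because $[-h_i - h_{\bar i}, f_j + f_{\bar j}]$ picks up the negative of the usual weight).

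The genuinely rank-one part is the node $n$. Here $\psi(\e_n) = 2f_n$, $\psi(\f_n) = e_n$, $\psi(\D_{n+1}) = -2h_{n+1}$, and the relevant relations are (\ref{efn}) and (\ref{fen}). For (\ref{efn}) I would compute $\psi(\e_n)^2 \psi(\f_n) - 2\psi(\e_n)\psi(\f_n)\psi(\e_n) + \psi(\f_n)\psi(\e_n)^2 = 4f_n^2 e_n - 8 f_n e_n f_n + 4 e_n f_n^2$ inside $U(\mathfrak{sl}_2)$-type subalgebra generated by $e_n, f_n, h_n$, using $[e_n, f_n] = h_n$ and $[h_n, e_n] = 2e_n$, $[h_n, f_n] = -2f_n$; a short normal-ordering calculation gives $4(f_n^2 e_n - 2 f_n e_n f_n + e_n f_n^2) = 4 \cdot (-2 f_n) = -8 f_n$, which should match $\psi(-4\e_n) = -8 f_n$. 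Similarly (\ref{fen}) becomes $e_n^2 f_n - 2 e_n f_n e_n + f_n e_n^2 = -2 e_n = \psi(-4\f_n) = -4 e_n$ — wait, here one must be careful: $\psi(-4\f_n) = -4 e_n$ while the $\mathfrak{sl}_2$ identity yields $e_n^2 f_n - 2e_n f_n e_n + f_n e_n^2 = 2 e_n$, so the matching forces checking that the coefficient conventions in (\ref{efn})--(\ref{fen}) are consistent with the factor of $2$ asymmetry between $\psi(\e_n)$ and $\psi(\f_n)$; this small asymmetric-rescaling verification at node $n$, together with the compatibility of the $\D_n, \D_{n+1}$ commutators with $\e_n, \f_n$ (relations (\ref{de}), (\ref{df}) at $a = n, n+1$, $j = n$, where the doubling in $\psi(\D_{n+1})$ interacts with the doubling in $\psi(\e_n)$), is the one place where a sign or factor could go wrong.

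I do not expect a serious obstacle: every relation reduces to either a block-diagonal copy of a known $\mathfrak{gl}$-relation or a one-line $\mathfrak{sl}_2$ computation at node $n$. The only real care needed is bookkeeping — tracking the raising/lowering swap, the overall sign on the $\D_i$, and the factor-$2$ rescaling at the special node — and confirming these are globally consistent across relations (\ref{de})--(\ref{df}), (\ref{ef}), (\ref{efn}), (\ref{fen}). Once all relations are checked, the universal property of the presentation in Proposition~\ref{presentation} gives the well-defined homomorphism $\psi$, and no further argument is required for this lemma; the harder claim that $\psi$ is an isomorphism is deferred to the later sections via the root-vector analysis.
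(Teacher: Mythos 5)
Your plan — verify that $\psi$ respects each defining relation of $U^j$ from Proposition~\ref{presentation}, organized by block structure for $i<n$ and by a rank-one $\mathfrak{sl}_2$-type computation at node $n$ — is exactly the approach the paper takes, and your remarks about the raising/lowering swap and the sign on the Cartan images are the same points the paper's proof checks. However, your working at node $n$ is left in an unresolved state: in checking~(\ref{fen}) you wrote that it becomes $e_n^2 f_n - 2 e_n f_n e_n + f_n e_n^2$ and then expressed uncertainty about whether this equals $2e_n$ or $-2e_n$, and whether it matches $\psi(-4\f_n)=-4e_n$. The issue is a dropped factor of $2$: the single occurrence of $\psi(\e_n)=2f_n$ in relation~(\ref{fen}) gives
\[
\psi(\f_n)^2\psi(\e_n)-2\psi(\f_n)\psi(\e_n)\psi(\f_n)+\psi(\e_n)\psi(\f_n)^2
\;=\;2\bigl(e_n^2 f_n - 2 e_n f_n e_n + f_n e_n^2\bigr)
\;=\;2\,[e_n,[e_n,f_n]],
\]
and since $[e_n,f_n]=h_n-h_{n+1}$ in $\mathfrak{gl}_{n+1}$ with $[e_n,h_n-h_{n+1}]=-2e_n$, this is $2\cdot(-2e_n)=-4e_n=\psi(-4\f_n)$, as required. (Your check of~(\ref{efn}) already carries the analogous factor of $4$ correctly.) Once this factor of $2$ is reinstated and the sign ambivalence in your note is resolved, the verification at node $n$ closes and the proposal coincides with the paper's proof.
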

\begin{proof}
The relations \ref{d}, \ref{eYB}, \ref{fYB}, \ref{ee}, \ref{ff} are trivial to check. For \ref{de}, it boils down to 
 \begin{align*}
[\D_a,\e_j]&=(-\delta_{a,j+1}+\delta_{a,j})\e_j
\end{align*}
unless $a=n+1$ and $j=n$, which will be checked separately ($2n+2-a=j+1$ is equivalent to $a+j=2n+1$, and since $j \leq n$ and $a\leq n+1$, this only holds when $j=n$ and $a=n+1$.) The calculation is as follows:
\begin{align*}
[\psi(\D_a),\psi(\e_j)]=&[-h_a-h_{\overline{a}}, f_j+f_{\overline{j}}]\\
=&(-\delta_{a,j+1}+\delta_{a,j})f_j+f_{\overline{j}} =(-\delta_{a,j+1}+\delta_{a,j}) (\psi(\e_j)).
\end{align*}
For the case $a=n+1$ and $j=n$
\begin{align*}
[\psi(\D_{n+1}),\psi(\e_n)]=[-2h_{n+1},2f_{n}]=-4f_{n} =-2\psi(\e_n)
\end{align*}
which is the value of $(-\delta_{a,j+1}-\delta_{2n+2-a,j+1}+\delta_{a,j})\psi(\e_j)$. \\

Relation \ref{df} can be checked similarly. For relation \ref{ef}, the relation holds when $i\neq j$. When $i=j<n$,
\begin{align*}
[\psi(\e_i),\psi(\f_i)]=&[f_i+f_{\overline{i}},e_i+e_{\overline{i}}]=[f_i,e_i]+[f_{\overline{i}},e_{\overline{i}}]\\
=&-h_i+h_{i+1}-h_{\overline{i}}+h_{\overline{i+1}}=\psi(\D_i-\D_{i+1}).
\end{align*}

Relations \ref{efn} is checked as follows (\ref{fen} can be checked similarly.)
\begin{align*}
\psi(\e_n^2\f_n-2\e_n\f_n\e_n+\f_n\e_n^2)=&\psi(\e_n)([\psi(\e_n),\psi(\f_n)])+[\psi(\f_n),\psi(\e_n)]\psi(\e_n)\\
=&4(f_n[f_n,e_n]+[e_n,f_n]f_n)=4(f_n(-h_n+h_{n+1})+(h_n-h_{n+1})f_n)\\
=&4([h_n,f_n]-[h_{n+1},f_n])=4(-f_n-f_n)=-8f_n=-4\psi(\e_n).
\end{align*}
Lemma is proved.
\end{proof}

\begin{rem}
As we will show later in Theorem~\ref{basisoffixedpoint}, $\psi$ is in fact an isomorphism.
\end{rem}


Let $\mathbf{i}$ be the involution on $U(\mathfrak{gl}_{n+1}(\mathbb{C}))\otimes U(\mathfrak{gl}_{n}(\mathbb{C}))$ induced by the following involution on each factor: $U(\mathfrak{gl}_{n+1}(\mathbb{C}))\simeq U(\mathfrak{gl}_{n+1}(\mathbb{C}))$ via $e_i \mapsto f_i$ ($1\leq i\leq n$), $f_i \mapsto e_i$ ($1\leq i\leq n$), $h_i \mapsto -h_i$ ($1\leq i\leq n+1$), and $U(\mathfrak{gl}_{n}(\mathbb{C}))\simeq U(\mathfrak{gl}_{n}(\mathbb{C}))$ similarly. Let $\phi=\mathbf{i}\circ \psi$, then $\phi$ is given by 
\begin{align}
\label{phi}
\begin{split}
\phi(\e_i) &= e_i+e_{\overline{i}}  \hspace{.3 in}\phi(\f_i)= f_i+f_{\overline{i}}  \hspace{.3 in} (i<n)   ,\\
\phi(\e_n) &= 2e_n  \hspace{.5 in} \phi(\f_n)= f_n  ,\\ 
\phi(\D_i) &=  h_i + h_{\overline{i}} \hspace{.1 in} (1\leq i \leq n) \hspace{.3 in} 
\phi(\D_{n+1}) = 2h_{n+1}.
\end{split}
\end{align}

\begin{rem}
Notice that the relations in $U^j$ are preserved once $\mathbf{d}_{n+1}$ is shifted by a constant. In other words, for any $c\in \mathbb{C}$, the map which fixes all other generators and $\mathbf{d}_{n+1}\mapsto \mathbf{d}_{n+1}+c$, defines an algebra automorphism on $U^j$. Therefore the map $\phi$ can be alternatively defined via a constant shift on $\mathbf{d}_{n+1}$.
\end{rem}

\begin{cor}\label{homo}
After restricting to $\mathfrak{g}$, the above map induces a well-defined Lie algebra homomorphism $\phi|_{\mathfrak{g^{\theta}}}:\mathfrak{g^{\theta}}\to \mathfrak{gl}_{n+1}(\mathbb{C})\oplus \mathfrak{gl}_n(\mathbb{C})$.
\end{cor}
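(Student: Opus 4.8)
The plan is to obtain Corollary~\ref{homo} as an immediate consequence of Lemma~\ref{phi} (equivalently, of the map $\phi$ in~\eqref{phi}, which is $\mathbf{i}\circ\psi$). Since $\phi: U^j\to U=U(\mathfrak{gl}_{n+1}\oplus\mathfrak{gl}_n)$ is already established as an associative algebra homomorphism, the only thing to check is that it sends the Lie subalgebra $\mathfrak{g}^\theta\subset U^j$ into the Lie subalgebra $\mathfrak{gl}_{n+1}(\mathbb{C})\oplus\mathfrak{gl}_n(\mathbb{C})\subset U$, and that the restriction is a Lie algebra map. The latter is automatic: any associative-algebra homomorphism restricts to a Lie homomorphism on the commutator Lie algebras of the source and target, so $\phi([x,y])=[\phi(x),\phi(y)]$ for all $x,y$. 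Thus the content of the corollary is purely the image statement: $\phi(\mathfrak{g}^\theta)\subseteq \mathfrak{gl}_{n+1}(\mathbb{C})\oplus\mathfrak{gl}_n(\mathbb{C})$.

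First I would recall from Lemma~\ref{Kolb} and Lemma~\ref{hfixed} that $\mathfrak{g}^\theta$ is spanned, as a Lie algebra, by the degree-one elements $\e_i,\f_i$ ($1\le i\le n$) together with $\D_i$ ($1\le i\le n+1$); more precisely $\mathfrak{g}^\theta$ is the Lie subalgebra of $U^j$ generated by these elements, and in fact every element of $\mathfrak{g}^\theta$ is a linear combination of iterated brackets of the generators. Next I would observe that under $\phi$ each generator lands in $V:=\mathfrak{gl}_{n+1}(\mathbb{C})\oplus\mathfrak{gl}_n(\mathbb{C})$: indeed $\phi(\e_i)=e_i+e_{\overline i}$, $\phi(\f_i)=f_i+f_{\overline i}$, $\phi(\e_n)=2e_n$, $\phi(\f_n)=f_n$, $\phi(\D_i)=h_i+h_{\overline i}$, $\phi(\D_{n+1})=2h_{n+1}$ are all elements of $V$, this being clear directly from~\eqref{phi}. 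Finally, since $V$ is a Lie subalgebra of $U$ (closed under the commutator bracket), and $\phi$ intertwines brackets, the image of any iterated bracket of the generators lies in $V$; hence $\phi(\mathfrak{g}^\theta)\subseteq V$. Restricting the codomain gives the desired Lie algebra homomorphism $\phi|_{\mathfrak{g}^\theta}:\mathfrak{g}^\theta\to \mathfrak{gl}_{n+1}(\mathbb{C})\oplus\mathfrak{gl}_n(\mathbb{C})$, and well-definedness is inherited from that of $\phi$.

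There is essentially no obstacle here; the statement is a bookkeeping corollary whose only subtlety is making sure that ``restricting to $\mathfrak{g}$'' makes sense, i.e. that $\mathfrak{g}^\theta$ really is a Lie subalgebra sitting inside the associative algebra $U^j$ on which $\phi$ is defined, and that its image genuinely lands in the Lie algebra $V$ rather than merely in the associative algebra generated by $V$. If one wanted to be completely explicit, one could note that since $\e_i,\f_i,\D_k$ span $\mathfrak{g}^\theta$ and their images span $V$ (the map on these spanning sets being given by the linear formulas in~\eqref{phi}), the linear map $\mathfrak{g}^\theta\to V$ defined on this spanning set is well-defined precisely because $\phi$ itself is, and it is a Lie map because $\phi$ is an associative-algebra map. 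I would therefore present this as a two- or three-line argument invoking Lemma~\ref{phi}, the generation statements (Lemmas~\ref{Kolb} and~\ref{hfixed}), and the elementary fact that an associative homomorphism restricts to a Lie homomorphism on subalgebras closed under the bracket.
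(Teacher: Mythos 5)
Your main argument is correct and is the natural one: since $\phi$ is an associative algebra homomorphism (Lemma~\ref{phi}), it automatically intertwines commutator brackets; since $\mathfrak{g}^\theta$ is generated as a Lie algebra by $\mathbf e_i,\mathbf f_i,\mathbf d_k$ (Lemmas~\ref{Kolb}, \ref{hfixed}), and $\phi$ sends each of these generators into the Lie subalgebra $\mathfrak{gl}_{n+1}\oplus\mathfrak{gl}_n$, which is closed under bracket, the image of any iterated bracket lies in $\mathfrak{gl}_{n+1}\oplus\mathfrak{gl}_n$. The paper gives no proof of the corollary, and this is exactly the argument being implicitly invoked. One caution on your final ``completely explicit'' aside: there you say $\mathbf e_i,\mathbf f_i,\mathbf d_k$ ``span'' $\mathfrak{g}^\theta$ and that their images ``span'' $V$ --- neither is true (these sets only generate the respective Lie algebras, as your main paragraph correctly states), so a linear map cannot be ``defined on this spanning set'' from the formulas in~(\ref{phi}) alone. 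That aside should be dropped; the generation-plus-bracket-closure argument in your second paragraph already completes the proof.
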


\subsection{Root vectors in $U^j$}
Next we aim to give a weight space decomposition of $\mathfrak{g}^{\theta}$. First let $\mathbf{d}_{n+1}'=\frac{1}{2}\mathbf{d}_{n+1}$. Let $\epsilon_i=\D_i^*$ be the element in $(\mathfrak{h}^{\theta})^*$ dual to $\D_i$ for $1\leq i\leq n$, and $\epsilon_{n+1}=(\D_{n+1}')^*$. 

We shall define root vectors in $\mathfrak{g}^{\theta}$ similar to the well-known root vectors in type A. Specifically, recall $h_1,\dots,h_{n+1}$ is a basis of the Cartan subalgebra of $\mathfrak{gl}_{n+1}(\mathbb{C})$, and $h_{\overline{1}},\dots,h_{\overline{n}}$ is a basis of the Cartan subalgebra of $\mathfrak{gl}_{n}(\mathbb{C})$. When context is clear and index makes sense, we identify $\epsilon_i$ with $h_i^* $ or $h_{\overline{i}}^*$. Let 
\begin{align}
\Phi_{n+1}=&\{\epsilon_i-\epsilon_j \hspace{.1 in}|\hspace{.1 in} 1\leq i,j\leq n+1, i\neq j\} \label{rootsing} \\
 \Pi_{n+1}=&\{\epsilon_i-\epsilon_j \hspace{.1 in}|\hspace{.1 in} 1\leq i<j\leq n+1, j-i=1\}, \notag
\end{align}
be the set of roots and simple roots in $\mathfrak{gl}_{n+1}(\mathbb{C})$. For any $\alpha\in \Phi_{n+1}$, the root vectors  $Y_{\alpha}\in\mathfrak{gl}_{n+1}(\mathbb{C})$ are defined recursively as follows. Recall that $e_1,\dots,e_n$ are the standard upper-triangular Chevalley generators in $\mathfrak{gl}_{n+1}(\mathbb{C})$. On a simple root $\alpha_i=\epsilon_i-\epsilon_{i+1}$, set $Y_{\alpha_i}=e_i$ for $1\leq i\leq n$. In general, let $\alpha=\epsilon_i-\epsilon_{j}$ where $i<j$, define $Y_{\alpha}$ recursively via
\begin{align}
Y_{\alpha}=Y_{(\epsilon_i-\epsilon_{i+1}
)+(\epsilon_{i+1}-\epsilon_j)}=[Y_{\epsilon_i-\epsilon_{i+1}},Y_{\epsilon_{i+1}-\epsilon_j}]=[e_i,Y_{\epsilon_{i+1}-\epsilon_j}]. \label{rootvectoringln}
\end{align}
Similarly, for $\alpha_i=\epsilon_i-\epsilon_{i+1}$, set $Y_{-\alpha_i}=f_i$, where $f_1,\dots,f_n$ are the standard lower-triangular Chevalley generators in $\mathfrak{gl}_{n+1}(\mathbb{C})$, and define $Y_{-\alpha}$ for negative roots recursively as follows
\begin{align}
Y_{-\alpha}=Y_{-(\epsilon_i-\epsilon_{i+1})-(\epsilon_{i+1}-\epsilon_j)}=[Y_{-(\epsilon_i-\epsilon_{i+1})},Y_{-(\epsilon_{i+1}-\epsilon_j)}]=[f_i,Y_{-\epsilon_{i+1}+\epsilon_j}] .\label{negativerootvectorgln}
\end{align}
Similarly, let 
\begin{align}
\Phi_{n}=&\{\epsilon_i-\epsilon_j \hspace{.1 in}|\hspace{.1 in} 1\leq i,j\leq n, i\neq j\} \hspace{.2 in} \label{rootsing2} \\
\Pi_{n}=&\{\epsilon_i-\epsilon_j \hspace{.1 in}|\hspace{.1 in} 1\leq i<j\leq n, j-i=1\} \notag
\end{align}
be the set of roots and simple roots in $\mathfrak{gl}_{n}(\mathbb{C})$. For any $\alpha\in \Phi_{n}$, the root vectors $Z_{\alpha}\in \mathfrak{gl}_{n}(\mathbb{C})$ are defined similarly.

It is well known that $\{Y_{\alpha}\}_{\alpha\in \Phi_{n+1}}\cup \{h_i\}_{i=1}^{n+1}$ form a basis for $\mathfrak{gl}_{n+1}(\mathbb{C})$, and $[h_i,Y_{\alpha}]=\alpha(h_i)Y_{\alpha}$. Similar results also hold for $Z_{\alpha}$.

In the same fashion, we construct root vectors in $\mathfrak{g}^{\theta}$. Recall that $\{H_1,\dots,H_{2n+1}\}$ is a basis of the Cartan subalgebra in $\mathfrak{gl}_{2n+1}(\mathbb{C})$. Let $\mu_i=H_i^*$ be the element in $\mathfrak{h}^*$ dual to  $H_i$ for $1\leq i \leq 2n+1$, and let
\begin{align}
\Phi_{2n+1}^+=&\{\mu_i-\mu_j \hspace{.1 in}| \hspace{.1 in} 1\leq i<j\leq 2n+1\}.\label{twosetsofroots}
\end{align}
We call $\Phi_{2n+1}^+$ the set of positive roots in $\mathfrak{gl}_{2n+1}$. For every positive root $\alpha\in \Phi^{+}_{2n+1}$, we construct $X_{\alpha}\in \mathfrak{g}^{\theta}$ recursively as follows.  Recall that $\e_i,\f_i,\D_i$ are elements in $\mathfrak{g}^{\theta}$ defined in $(\ref{gen1})-(\ref{gen4})$. On a simple root, let 
\begin{align*}
X_{\mu_i-\mu_{i+1}}=&\e_i \hspace{.2 in} (1\leq i\leq n)\\
X_{\mu_i-\mu_{i+1}}=&\f_{2n+1-i}\hspace{.2 in} (n+1\leq i\leq 2n).
\end{align*}
In general, for $\alpha=\mu_i-\mu_j$, define $X_{\alpha}$ similar to (\ref{rootvectoringln}):
\begin{align}
X_{\mu_i-\mu_j}=X_{(\mu_i-\mu_{i+1})-(\mu_{i+1}-\mu_j)}=[X_{\mu_i-\mu_{i+1}},X_{\mu_{i+1}-\mu_j}] .\label{rootvectorsinfixed}
\end{align}

Even though the definition uses a fixed order of splitting a positive root into a sum of simple roots, we argue that the result is independent of the order of the sum and its associated order of taking Lie brackets. Specifically, it is helpful to give an alternative formulation of the root vectors:
\begin{cor}\label{independentofbrackets}
The following holds for any  $1\leq i <j  \leq 2n+1$: if $\alpha=\mu_i-\mu_j$, then
\begin{align}
X_{\alpha}=X_{(\mu_{i}-\mu_{j-1})+(\mu_{j-1}-\mu_j)}=[X_{\mu_{i}-\mu_{j-1}},X_{\mu_{j-1}-\mu_j}]. \label{reversingbrackets}
\end{align}
\end{cor}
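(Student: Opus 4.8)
The plan is to prove Corollary~\ref{independentofbrackets} by induction on $j-i$, showing that the ``standard'' left-nested definition $X_{\mu_i-\mu_j}=[X_{\mu_i-\mu_{i+1}},X_{\mu_{i+1}-\mu_j}]$ agrees with the right-nested expression $[X_{\mu_i-\mu_{j-1}},X_{\mu_{j-1}-\mu_j}]$. The base case $j-i=1$ is trivial and $j-i=2$ is immediate since both sides literally coincide with $[X_{\mu_i-\mu_{i+1}},X_{\mu_{i+1}-\mu_{i+2}}]$. For the inductive step, I would expand $X_{\mu_i-\mu_{i+1}}$ and $X_{\mu_{i+1}-\mu_j}$ (the latter via the inductive hypothesis as $[X_{\mu_{i+1}-\mu_{j-1}},X_{\mu_{j-1}-\mu_j}]$) and apply the Jacobi identity:
\begin{align*}
[X_{\mu_i-\mu_{i+1}},[X_{\mu_{i+1}-\mu_{j-1}},X_{\mu_{j-1}-\mu_j}]]
&=[[X_{\mu_i-\mu_{i+1}},X_{\mu_{i+1}-\mu_{j-1}}],X_{\mu_{j-1}-\mu_j}]\\
&\quad+[X_{\mu_{i+1}-\mu_{j-1}},[X_{\mu_i-\mu_{i+1}},X_{\mu_{j-1}-\mu_j}]].
\end{align*}
The first term on the right is exactly $[X_{\mu_i-\mu_{j-1}},X_{\mu_{j-1}-\mu_j}]$ by the definition of $X_{\mu_i-\mu_{j-1}}$, which is the desired right-nested form. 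So everything reduces to showing the second term vanishes, i.e.\ $[X_{\mu_i-\mu_{i+1}},X_{\mu_{j-1}-\mu_j}]=0$.

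The vanishing of $[X_{\mu_i-\mu_{i+1}},X_{\mu_{j-1}-\mu_j}]$ should follow from the explicit form of the simple root vectors together with the commuting relations already recorded. When both simple roots lie in the same ``half'' (both indices $\le n$ or both $\ge n+1$) this is relation~(\ref{ee}) or~(\ref{ff}) applied with index gap $|i-(j-1)|>1$; note $j-1>i+1$ is guaranteed since we are in the case $j-i\ge 3$. The slightly delicate case is when the two simple roots straddle the middle, i.e.\ $i+1\le n$ while $j-1\ge n+1$; here one simple vector is some $\e_a$ and the other is some $\f_b$ with $a\ne b$ (indeed $a\le i+1\le n$ and $2n+1-b = $ the other index $\ge n+1$ forces $b\le n$, and the index separation makes $a\ne b$), so relation~(\ref{ef}) gives $[\e_a,\f_b]=0$. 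One should also double-check the boundary situation where $j-1=n+1$ so that $X_{\mu_{j-1}-\mu_j}=\f_n$ and $i+1\le n$; then we need $[\e_i,\f_n]$ or more generally $[\e_a,\f_n]=0$ for $a<n$, which again is relation~(\ref{ef}) with $i\ne j$. I expect the main obstacle to be bookkeeping these index ranges carefully rather than any conceptual difficulty: one must verify that for $j-i\ge 3$ the two outer simple roots $\mu_i-\mu_{i+1}$ and $\mu_{j-1}-\mu_j$ are always ``far apart'' in the appropriate sense, so that one of the commuting relations~(\ref{ef}),~(\ref{ee}),~(\ref{ff}) applies.

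An alternative, arguably cleaner, route is to observe that $\phi|_{\mathfrak{g}^\theta}$ (Corollary~\ref{homo}) sends $X_{\mu_i-\mu_j}$, under either bracketing, to an element of $\mathfrak{gl}_{n+1}\oplus\mathfrak{gl}_n$ built from the genuine type~A root vectors $Y_\alpha,Z_\alpha$, for which bracket-order independence is the classical fact quoted just before the statement; since $\phi|_{\mathfrak{g}^\theta}$ is injective (which will be established, or can be taken from the homomorphism being eventually an isomorphism), the two bracketings must coincide in $\mathfrak{g}^\theta$ as well. However, to keep the argument self-contained at this point in the paper I would present the direct Jacobi-identity induction above, and remark that it in fact shows the stronger statement that $X_{\mu_i-\mu_j}$ is independent of \emph{any} way of splitting $\mu_i-\mu_j$ into a sum of consecutive simple roots, by iterating the identity.
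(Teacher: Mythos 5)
Your proposal takes essentially the same route as the paper: fix the right endpoint, induct (the paper decreases $i$, you increase $j-i$, which is the same), handle $j-i\in\{1,2\}$ as base cases, and reduce the inductive step via the Jacobi identity to showing that the two outer simple root vectors $X_{\mu_i-\mu_{i+1}}$ and $X_{\mu_{j-1}-\mu_j}$ commute.

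However, there is a genuine gap in the commutation step, and in fact the paper's own proof has the very same gap. Your claim that in the straddling case ``the index separation makes $a\ne b$'' is false. Writing $X_{\mu_i-\mu_{i+1}}=\e_a$ with $a=i\le n$ and $X_{\mu_{j-1}-\mu_j}=\f_b$ with $b=2n+2-j$, one has $a=b$ precisely when $i+j=2n+2$; combined with $j-i\ge 3$ this happens for every $1\le i\le n-1$ with $j=2n+2-i$. In that case $[\e_a,\f_a]=\D_a-\D_{a+1}\ne 0$ by relation~(\ref{ef}). (The paper's assertion that nonadjacent simple root vectors in the sequence $\e_1,\dots,\e_n,\f_n,\dots,\f_1$ commute is equally false in this case: $\e_a$ and $\f_a$ sit at positions $a$ and $2n+1-a$, nonadjacent, yet they fail to commute.) A concrete counterexample to your claim: $n=3$, $i=2$, $j=6$ gives $a=b=2$ and $[\e_2,\f_2]=\D_2-\D_3$.

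The corollary is nevertheless true, but the inductive step needs a patch for this case. When $i+j=2n+2$, the residual Jacobi term is
\begin{align*}
[X_{\mu_{i+1}-\mu_{j-1}},[X_{\mu_i-\mu_{i+1}},X_{\mu_{j-1}-\mu_j}]]
=[X_{\mu_{i+1}-\mu_{2n+1-i}},\D_i-\D_{i+1}],
\end{align*}
and this vanishes not because the inner bracket is zero, but because $X_{\mu_{i+1}-\mu_{2n+1-i}}$ is a weight vector of weight $s(\mu_{i+1}-\mu_{2n+1-i})=0$ for the adjoint action of all $\D_k$ (the telescoping of $s$ over the symmetric index range $i+1,\dots,2n-i$ gives $0$). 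This weight fact is exactly the content of Lemma~\ref{correctweight}, whose proof does not use the present corollary, so it is safe to invoke (or re-prove inline). Finally, your proposed alternative via injectivity of $\phi$ is not available at this point: injectivity rests on Theorem~\ref{basisoffixedpoint}, which rests on Lemma~\ref{rootvectors}, whose proof in turn cites Corollary~\ref{independentofbrackets}, so that route is circular.
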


\begin{proof}
First, for $x,y,z\in \mathfrak{g}$, if $[x,z]=0$, then
\begin{align*}
[x,[y,z]]=-[y,[z,x]]-[z,[x,y]]=[[x,y],z].
\end{align*}
Fix $j$ and induct on $i$. The base cases $i=j-1$ and $i=j-2$ follow from definition. Suppose the statement is true for $k\leq i<j$, then
\begin{align*}
[X_{\mu_{k-1}-\mu_j}]=[X_{\mu_{k-1}-\mu_k},X_{\mu_k-\mu_j}]=[X_{\mu_{k-1}-\mu_k},[X_{\mu_k-\mu_{j-1}},X_{\mu_{j-1}-\mu_j}]]\\
=[[X_{\mu_{k-1}-\mu_k},X_{\mu_k-\mu_{j-1}}],X_{\mu_{j-1}-\mu_j}]]=[X_{\mu_{k-1}-\mu_{j-1}},X_{\mu_{j-1}-\mu_j}].
\end{align*}
The third equality follows from the fact that for any two simple roots not adjascent to each other in the sequence $\e_1,\dots,\e_n,\f_n,\dots,\f_1$, their associated root vectors commute with each other.
\end{proof}

For the set of simple roots
\begin{align*}
\Pi_{2n+1} = \{ \alpha_i=\mu_i-\mu_{i+1} \hspace{.1 in}| \hspace{.1 in} 1\leq i\leq 2n \}.
\end{align*}
We impose a total order based on the ordering on $i$. Moreover, two roots are said to be adjacent if they are adjacent on the associated Dynkin diagram of type $A_{2n}$. Each simple root vector induces a Lie algebra endomorphism in $\mathfrak{g}^{\theta}$ by its adjoint action: 
\begin{align*}
\operatorname{ad} X_{\alpha_i}: \mathfrak{g}^{\theta} \to \mathfrak{g}^{\theta} \hspace{.3 in} \operatorname{ad} X_{\alpha_i} (z)=[X_{\alpha_i},z].
\end{align*}
For short we denote $\operatorname{ad} X_{\alpha_i}$ simply as $\underline{\alpha}_i$. The following identity will be helpful in the future.

The sequence of simple roots $(\alpha_{i_1},\alpha_{i_2},\alpha_{i_3},\dots,\alpha_{i_s})$ is called \emph{admissible} if for any $k$ such that $1\leq k\leq s-1$, $\alpha_k$ is adjacent to one of $\alpha_{i_{k+1}}$, $\alpha_{i_{k+2}}$, \dots, $\alpha_{i_s}$.

\begin{lemma}\label{admissible}
Let $\alpha_{i_1}$,\dots, $\alpha_{i_s}$ be a sequence of simple roots. Then 
\begin{align*}
\aaa_{i_1}\circ \aaa_{i_1}\circ \cdots \circ \aaa_{i_s} (\mathbf{d}_k)\neq 0
\end{align*}
for some $k$, $1\leq k\leq n+1$, only if the sequence $(\alpha_{i_1},\dots,\alpha_{i_s})$ is admissible.
\end{lemma}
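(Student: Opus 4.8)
The plan is to argue by contrapositive on the structure of the sequence $(\alpha_{i_1},\dots,\alpha_{i_s})$, using the $\mathbf{d}_k$-weight decomposition of $\mathfrak{g}^\theta$ that underlies the whole section. The key observation is that each $\mathbf{d}_k$ lies in the zero weight space (it commutes with all $\mathbf{d}_j$ by \eqref{d}), and that applying $\underline{\alpha}_{i_s} = \operatorname{ad} X_{\alpha_{i_s}}$ moves an element from weight $\beta$ to weight $\beta + (\mu_{i_s}-\mu_{i_s+1})$-type shift; more precisely, since the root vectors $X_{\alpha_i}$ and the adjoint action are compatible with the grading by $\mathfrak{h}^\theta$-weights (equivalently, by the $\mathbf d_k$-eigenvalues via \eqref{de}, \eqref{df}), the composite $\underline{\alpha}_{i_1}\circ\cdots\circ\underline{\alpha}_{i_s}(\mathbf d_k)$ has weight equal to the sum of the weights of the $\alpha_{i_t}$. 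So a first necessary condition is that this weight sum be a root (or zero) of $\mathfrak{g}^\theta$; but the sharper admissibility condition requires a finer, inductive argument that tracks \emph{which} simple root vector actually has a nonzero bracket at each stage.

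The main induction would be on $s$, reading the composite from the innermost application outward. First I would record the base fact: $[X_{\alpha_i}, \mathbf d_k]$ is a nonzero multiple of $X_{\alpha_i}$ precisely when $\alpha_i$ "touches" index $k$ in the sense dictated by \eqref{de}--\eqref{df} (i.e.\ $k\in\{i, i+1, 2n+2-i, \dots\}$), and is zero otherwise; in any case $[X_{\alpha_i},\mathbf d_k]$ is either $0$ or proportional to a root vector $X_{\alpha_i}$ whose support (the set of indices it "involves") is exactly that of $\alpha_i$. Then, inductively, suppose $\underline{\alpha}_{i_2}\circ\cdots\circ\underline{\alpha}_{i_s}(\mathbf d_k)$ is nonzero; by the inductive hypothesis the sequence $(\alpha_{i_2},\dots,\alpha_{i_s})$ is admissible, and moreover the resulting element is (a scalar times) a root vector $X_\gamma$ where $\gamma = \mu_a - \mu_b$ is a positive root whose "interval of support" $\{a, a+1, \dots, b\}$ is the union of the supports of $\alpha_{i_2},\dots,\alpha_{i_s}$ — here one uses Corollary~\ref{independentofbrackets} (the bracket is order-independent) together with the fact that adjacent simple root vectors have nonzero bracket and non-adjacent ones commute. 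Finally, $\underline{\alpha}_{i_1}(X_\gamma) = [X_{\alpha_{i_1}}, X_\gamma]$ is nonzero only if $\alpha_{i_1}$ is adjacent (in the $A_{2n}$ Dynkin sense) to the interval of support of $X_\gamma$, which is exactly to say that $\alpha_{i_1}$ is adjacent to one of $\alpha_{i_2},\dots,\alpha_{i_s}$ — completing the admissibility of $(\alpha_{i_1},\dots,\alpha_{i_s})$ and the induction.

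The one subtlety I would flag is that $\mathfrak{g}^\theta$ is \emph{not} semisimple of type $A$ — it is the fixed-point algebra, so its root vectors $X_\alpha$ can be $2$-dimensional in a given weight (the $X_\alpha, X_\alpha'$ of Proposition~\ref{stalksofweights}), and brackets of root vectors need not be single root vectors. So the cleanest route is to prove the stronger statement by induction: not only is the composite nonzero only for admissible sequences, but when it \emph{is} nonzero it lands in the span of root vectors supported on the interval covered by $\bigcup_t \{i_t, i_t+1\}$ (reflected appropriately under $\tau$), so that the adjacency test at the outermost step is meaningful. The main obstacle, therefore, is bookkeeping: precisely formulating the "support interval" of the intermediate elements in $\mathfrak{g}^\theta$ and checking that the bracket with $X_{\alpha_{i_1}}$ vanishes whenever $\alpha_{i_1}$ lies outside (and is not adjacent to) that interval. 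This reduces to the type-$A$ statement that $[e_i, Y_{\mu_a-\mu_b}] = 0$ unless $i\in\{a-1, b\}$, applied in each of the two blocks of $\mathfrak{gl}_{n+1}\oplus\mathfrak{gl}_n$ after transporting along the homomorphism $\phi$ of \eqref{phi} — which is legitimate since $\phi$ is a homomorphism (the harder fact that it is an isomorphism is not needed here), and $\phi$ is injective on the relevant spans, so vanishing downstairs forces vanishing upstairs.
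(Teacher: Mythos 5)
Your proposed route has a genuine gap, and in fact the paper's proof is substantially more elementary than what you sketch.

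The central difficulty is your inductive claim that the intermediate element $\underline{\alpha}_{i_2}\circ\cdots\circ\underline{\alpha}_{i_s}(\mathbf{d}_k)$, when nonzero, is a scalar multiple of a single root vector $X_\gamma$ with a consecutive support interval. This is not true when the sequence contains repeated simple roots (which the lemma allows), and even in the repetition-free case it is precisely the content of Lemma~\ref{shuffle}, which itself is proved \emph{using} Lemma~\ref{admissible}. You then propose the weaker "lands in the span of root vectors supported on the right interval," but making that precise and self-contained would essentially amount to re-proving Theorem~\ref{span} --- which again depends on Lemma~\ref{admissible} (and on Lemma~\ref{keyrewrite} for the repeated-root cases). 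The argument is therefore circular unless those dependencies are carefully re-ordered, and you would still have to handle the zero-weight case, where the weight space is $N$-dimensional and the intermediate element need not be a root vector at all.

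The second gap is the final "transport along $\phi$" step. To conclude "vanishing downstairs forces vanishing upstairs" you need $\phi$ to be injective on the span of the intermediate elements. You acknowledge this, but at this point in the paper the injectivity of $\phi$ is unavailable: it is exactly Theorem~\ref{basisoffixedpoint}, which is proved after, and by means of, Theorem~\ref{span} and hence Lemma~\ref{admissible}. Merely knowing that $\phi$ is a homomorphism is not enough; a homomorphism only lets you push vanishing \emph{forward}, not pull it back.

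The paper's actual proof avoids all of this. It is a direct induction on $s$: assuming the composite of length $m+1$ is nonzero, the inner composite of length $m$ must be admissible by induction; if $\alpha_{i_1}$ were nonadjacent to every $\alpha_{i_t}$ with $t\ge 2$, then a single application of the Jacobi identity
\[
[X_{\alpha_{i_1}},[X_{\alpha_{i_2}},z]]=[[X_{\alpha_{i_1}},X_{\alpha_{i_2}}],z]+[X_{\alpha_{i_2}},[X_{\alpha_{i_1}},z]]
\]
kills the first term (nonadjacency gives $[X_{\alpha_{i_1}},X_{\alpha_{i_2}}]=0$) and the second term vanishes by the inductive hypothesis applied to the inadmissible length-$m$ sequence $(\alpha_{i_1},\alpha_{i_3},\dots,\alpha_{i_{m+1}})$. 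No weight-space decomposition, no transport along $\phi$, and no structural claim about the intermediate elements is needed. If you want to salvage your approach, you would first need to prove the support statement for the fixed-point algebra directly (not via $\phi$), and formulate it carefully enough to accommodate repeated roots and zero-weight outputs; but at that point the Jacobi-identity argument is shorter.
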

\begin{proof}
We prove by induction on $s$. Suppose the statement is true for $s\leq m$. For $s=m+1$, suppose there exists $\mathbf{d}_k$ such that
\begin{align*}
\aaa_{i_1}\circ\aaa_{i_2}\circ\cdots \circ \aaa_{i_{m+1}}(\mathbf{d}_k)\neq 0
\end{align*}
then the sequence $(\alpha_{i_2},\dots,\alpha_{i_{m+1}})$ is admissible by induction hypothesis. Suppose further that $\alpha_{i_1}$ is not adjacent to any of $\alpha_{i_2},\dots,\alpha_{i_{m+1}}$, then
\begin{align*}
\aaa_{i_1}\circ\aaa_{i_2}\circ\cdots \circ \aaa_{i_{m+1}}(\mathbf{d}_k)=[[X_{\alpha_{i_1}},X_{\alpha_{i_2}}],\aaa_{i_3}\circ\aaa_{i_2}\circ\cdots \circ \aaa_{i_{m+1}}(\mathbf{d}_k)]\\
+[\aaa_{i_1}\circ\aaa_{i_3}\circ\cdots \circ \aaa_{i_{m+1}}(\mathbf{d}_k),X_{\alpha_{i_2}}].
\end{align*}
In the sum, the first term is zero because $\alpha_{i_1}$ is nonadjacent to $\alpha_{i_2}$, and the second term is zero by induction hypothesis for $s=m$. Now we have shown that the claim is true for $s=m+1$.
\end{proof}

\begin{lemma}\label{shuffle}
 Let $(\beta_{1},\dots,\beta_{s})$ be a sequence of consecutive simple roots in increasing order. If $(\alpha_{i_1},\dots,\alpha_{i_s})$ is a permutation of the sequence $(\beta_{1},\dots,\beta_{s})$, then
\begin{align*}
\aaa_{i_1}\circ\aaa_{i_2} \circ \cdots\circ \aaa_{i_s}(\mathbf{d}_k)\in \mathbb{C} \bbb_{1}\circ \bbb_{2}\circ\cdots\circ \bbb_{s}(\mathbf{d}_k) 
\end{align*}
for all $1\leq k\leq n+1$.
\end{lemma}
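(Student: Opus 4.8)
The plan is to prove Lemma~\ref{shuffle} by induction on the length $s$, using Lemma~\ref{admissible} to sharply restrict which permutations $(\alpha_{i_1},\dots,\alpha_{i_s})$ can give a nonzero operator, and then using the Jacobi-type identity already recorded in the proof of Corollary~\ref{independentofbrackets} (namely that $\mathrm{ad}_x\circ\mathrm{ad}_y=\mathrm{ad}_{[x,y]}$ when $[x,z]=0$ applied to $z$) to shuffle adjacent transpositions past each other at the cost of a scalar. Since $(\beta_1,\dots,\beta_s)$ is a block of consecutive simple roots in the chain $\e_1,\dots,\e_n,\f_n,\dots,\f_1$, the relevant root vectors behave like those of $\mathfrak{sl}_{s+1}$ (type $A$), and the admissibility constraint forces the permutation to be close to the identity order.

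First I would set up the induction: for $s=1$ there is nothing to prove, and for $s=2$ one checks directly that $\aaa_{i_1}\circ\aaa_{i_2}(\mathbf{d}_k)$ and $\bbb_1\circ\bbb_2(\mathbf{d}_k)$ are scalar multiples of each other (both are zero unless $\beta_1,\beta_2$ are adjacent, in which case a single application of the Jacobi identity swaps the order up to sign). For the inductive step, consider $\aaa_{i_1}\circ\cdots\circ\aaa_{i_s}(\mathbf{d}_k)$ and look at where the largest simple root $\beta_s$ sits in the sequence $(\alpha_{i_1},\dots,\alpha_{i_s})$. The key observation is that by Lemma~\ref{admissible}, if the whole composite is nonzero then the sequence is admissible, which restricts $\beta_s$'s position: $\beta_s$ is adjacent only to $\beta_{s-1}$ among the $\beta$'s, so every root appearing to the left of $\beta_s$ in an admissible sequence must either be adjacent to $\beta_s$ (i.e. be $\beta_{s-1}$) or be adjacent to something still further right. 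I would argue that one can move $\beta_s$ to the far right (the position it occupies in $(\beta_1,\dots,\beta_s)$) by a sequence of adjacent swaps, each of which either leaves the operator unchanged (when the two swapped roots are non-adjacent, their ad-operators commute) or multiplies it by a scalar (when they are adjacent, via the $\mathrm{ad}_{[x,y]}$ identity, noting $[X_{\beta_s},\mathbf{d}_k]$-type terms and iterated brackets stay inside the span). Once $\beta_s$ is rightmost, write the operator as $(\text{permutation of }\beta_1,\dots,\beta_{s-1})\circ\bbb_s$ acting on $\mathbf{d}_k$, but $\bbb_s(\mathbf{d}_k)=[X_{\beta_s},\mathbf{d}_k]$ is a scalar multiple of a single root vector $X_{\beta_s}$, and then apply the induction hypothesis to the length-$(s-1)$ permutation of $\beta_1,\dots,\beta_{s-1}$ acting on that root vector — though here I'd need the analogous statement for the argument being a root vector rather than $\mathbf{d}_k$, which follows from the same reasoning since $[X_{\beta_i}, X_{\beta_s}]$ is again a root vector when adjacent and $0$ otherwise.

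The main obstacle I anticipate is bookkeeping the scalars and, more substantively, that the induction hypothesis is stated only for the operators applied to $\mathbf{d}_k$, whereas the natural inductive argument wants to apply a shorter composite to a root vector $X_\alpha$. I would handle this by either (a) strengthening the inductive statement to allow the base element to be any $\mathbf{d}_k$ or any of the root vectors $X_{\mu_a-\mu_b}$ lying in the relevant consecutive block, and checking the base cases remain trivial, or (b) observing that $\mathbf{h}_k$-type $0$-weight elements and root vectors behave uniformly under $\mathrm{ad}X_{\beta_i}$ for $\beta_i$ a simple root in the block, so the same admissibility/Jacobi machinery applies verbatim. A secondary subtlety is ensuring that when $\beta_s$ is not adjacent to its left neighbor we genuinely have commuting ad-operators — this is exactly the statement quoted at the end of the proof of Corollary~\ref{independentofbrackets}, that root vectors of non-adjacent simple roots commute — so no new input is needed there. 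Modulo these points the argument is a finite shuffle computation and I would present it as such rather than enumerating all permutations.
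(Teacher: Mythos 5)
Your plan differs from the paper's in the choice of anchor: you fix the \emph{largest} root $\beta_s$ and try to move it to the rightmost position, whereas the paper fixes the \emph{leftmost} entry $\alpha_{i_1}$. The paper first applies the induction hypothesis to the tail $(\alpha_{i_2},\dots,\alpha_{i_s})$, reducing to the word $\aaa_{i_1}\circ\bbb_1\circ\cdots\circ\bbb_{s-1}$, and then observes that the one missing root $\alpha_{i_1}$ must sit at an end of the sorted block, i.e.\ be immediately smaller than $\beta_1$ or immediately larger than $\beta_{s-1}$. In the latter case it pushes $\alpha_{i_1}$ rightward past $\beta_1$ --- and this is the crucial point --- that single push is \emph{free}, since $\alpha_{i_1}$ is at distance $\geq 2$ from $\beta_1$ and so $[X_{\alpha_{i_1}},X_{\beta_1}]=0$; the inductive hypothesis is then applied once more. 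The only Jacobi exchange ever performed in the paper's proof is between two non-adjacent simple root vectors.

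Your route has a genuine gap exactly where it departs from this. By Lemma~\ref{admissible}, if the whole composite is nonzero and $\beta_s$ is not already last, then $\beta_{s-1}$ (the unique root in the block adjacent to $\beta_s$) must occur strictly to its right, so carrying $\beta_s$ rightward forces a swap with $\beta_{s-1}$. That swap is not a scalar multiplication: the Jacobi identity gives
$\operatorname{ad}_{X_{\beta_s}}\circ\operatorname{ad}_{X_{\beta_{s-1}}}=\operatorname{ad}_{[X_{\beta_s},X_{\beta_{s-1}}]}+\operatorname{ad}_{X_{\beta_{s-1}}}\circ\operatorname{ad}_{X_{\beta_s}}$,
and $[X_{\beta_s},X_{\beta_{s-1}}]\neq 0$ precisely because these two roots are adjacent. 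So you pick up a genuine extra term $\operatorname{ad}_{X_{\beta_{s-1}+\beta_s}}$ applied to the remaining word, and there is no a priori reason this extra contribution is proportional to the other; weight considerations alone do not force it, since Proposition~\ref{stalksofweights} shows the relevant $\mathfrak{g}^{\theta}$ root spaces can be two-dimensional. The identity $\operatorname{ad}_x\circ\operatorname{ad}_y=\operatorname{ad}_{[x,y]}$ that you invoke from Corollary~\ref{independentofbrackets} only holds when the argument is killed by $x$, which is not the situation here. The secondary issue you flag --- that the hypothesis as stated is applied to $\mathbf{d}_k$ rather than to a root vector --- is real, but even after strengthening the statement as you propose, this adjacent-swap step remains unjustified. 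Restructuring along the paper's lines, so that the pushed root is always non-adjacent to the one it passes, is what closes the gap.
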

\begin{proof}
First observe that if $\aaa_{i_1}\circ \aaa_{i_2}\circ \cdots\circ \aaa_{i_{t+1}}(\mathbf{d}_k)=0$, then the statement is automatically true. We now assume that this quantity is nonzero. It follows that the sequence $(\alpha_{i_2},\alpha_{i_{k+1}},\dots,\alpha_{i_s})$ must be admissible, and therefore must be a permutation of consecutive roots by Lemma~\ref{admissible}. 

We proceed by induction on $s$: the base case when $s=2$ is a straightforward check. Now suppose the statement holds for $s\leq t$. We claim it also holds for $s=t+1$. Let $\beta_1,\dots,\beta_t$ be the rearrangement of $\alpha_{i_2},\dots,\alpha_{i_{k+1}}$ in increasing order, then
\begin{align*}
x=\aaa_{i_1}\circ \aaa_{i_2}\circ \cdots\circ \aaa_{i_{t+1}}(\mathbf{d}_k)\in \mathbb{C} \aaa_{i_1}\circ \bbb_{1}\circ \cdots\circ \bbb_{t}(\mathbf{d}_k).
\end{align*}
Since $x\neq 0$, $\alpha_{i_1}$ is adjacent to one of $\beta_1,\dots,\beta_t$. By assumption there are no repeated roots, therefore $\alpha_{i_1}$ must be immediately smaller than $\beta_1$ or immediately larger than $\beta_t$. The former case yields the desired conclusion automatically. If the latter is true, then
\begin{align*}
&\aaa_{i_1}\circ \bbb_{1}\circ \cdots\circ \bbb_{t}(\mathbf{d}_k)\\
=&[[X_{\alpha_1},X_{\beta_1}],\bbb_2\circ\cdots\circ \bbb_t(\mathbf{d}_k)+[\aaa_{i_1}\circ\bbb_2 \circ \cdots\circ \bbb_t (\mathbf{d}_k),X_{\beta_1}].
\end{align*}
The first term is zero, and the second term is in $\mathbb{C}[X_{\beta_1},\bbb_2\circ \cdots\circ  \bbb_t \circ \aaa_{i_1}(\mathbf{d}_k)]$ by the induction hypothesis as desired.
\end{proof}

\begin{lemma}\label{keyrewrite}
For any $a,b,c\in \mathfrak{g}$
\begin{align*}
[a,[b,[a,c]]]=\frac{1}{2}[b,[a,[a,c]]]+\frac{1}{2}[c,[a,[a,b]]]+\frac{1}{2}[a,[a,[b,c]]].
\end{align*} 
\end{lemma}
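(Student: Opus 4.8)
The plan is to verify the identity
\[
[a,[b,[a,c]]]=\tfrac12[b,[a,[a,c]]]+\tfrac12[c,[a,[a,b]]]+\tfrac12[a,[a,[b,c]]]
\]
by pure Lie-algebra manipulation, using only the Jacobi identity and antisymmetry; no structural facts about $\mathfrak{g}$ or $\mathfrak{g}^\theta$ are needed since the identity holds in any Lie algebra (indeed in any algebra once one expands, but working with brackets is cleaner). First I would introduce the operator notation $\operatorname{ad}_x(y)=[x,y]$ and recall that $\operatorname{ad}_{[x,y]}=[\operatorname{ad}_x,\operatorname{ad}_y]=\operatorname{ad}_x\operatorname{ad}_y-\operatorname{ad}_y\operatorname{ad}_x$, which is exactly the Jacobi identity in operator form. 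The left-hand side is then $\operatorname{ad}_a\operatorname{ad}_b\operatorname{ad}_a(c)$, and the game is to push all the $\operatorname{ad}_a$'s to act in a symmetric way.

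The key step is the following rewriting. Apply Jacobi to the inner pair: $\operatorname{ad}_b\operatorname{ad}_a=\operatorname{ad}_a\operatorname{ad}_b-\operatorname{ad}_{[a,b]}$, so
\[
\operatorname{ad}_a\operatorname{ad}_b\operatorname{ad}_a
=\operatorname{ad}_a\operatorname{ad}_a\operatorname{ad}_b-\operatorname{ad}_a\operatorname{ad}_{[a,b]}.
\]
Symmetrically, applying Jacobi to the outer pair gives $\operatorname{ad}_a\operatorname{ad}_b=\operatorname{ad}_b\operatorname{ad}_a+\operatorname{ad}_{[a,b]}$, hence
\[
\operatorname{ad}_a\operatorname{ad}_b\operatorname{ad}_a
=\operatorname{ad}_b\operatorname{ad}_a\operatorname{ad}_a+\operatorname{ad}_{[a,b]}\operatorname{ad}_a.
\]
Averaging these two expressions for the same quantity yields
\[
\operatorname{ad}_a\operatorname{ad}_b\operatorname{ad}_a
=\tfrac12\operatorname{ad}_b\operatorname{ad}_a\operatorname{ad}_a+\tfrac12\operatorname{ad}_a\operatorname{ad}_a\operatorname{ad}_b
+\tfrac12\bigl(\operatorname{ad}_{[a,b]}\operatorname{ad}_a-\operatorname{ad}_a\operatorname{ad}_{[a,b]}\bigr).
\]
The first two terms applied to $c$ are $\tfrac12[b,[a,[a,c]]]$ and $\tfrac12[a,[a,[b,c]]]$, matching the first and third terms of the claim. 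For the last term, Jacobi in operator form gives $\operatorname{ad}_{[a,b]}\operatorname{ad}_a-\operatorname{ad}_a\operatorname{ad}_{[a,b]}=-[\operatorname{ad}_a,\operatorname{ad}_{[a,b]}]=-\operatorname{ad}_{[a,[a,b]]}$, so that term applied to $c$ is $-\tfrac12[[a,[a,b]],c]=\tfrac12[c,[a,[a,b]]]$, which is exactly the remaining middle term. Collecting the three pieces gives the asserted identity.

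I do not anticipate a genuine obstacle here; the only thing to be careful about is bookkeeping of signs when converting the operator identity $\operatorname{ad}_{[x,y]}=[\operatorname{ad}_x,\operatorname{ad}_y]$ back into nested brackets, and the sign flip $-[z,c]=[c,z]$ at the end. An alternative, equally short route is to expand all four nested brackets into sums of six associative monomials $abc,acb,\dots$ after embedding $\mathfrak{g}$ into its universal enveloping algebra, and check that the linear combination of monomials on the two sides agree term by term; this is more mechanical but leaves no room for a sign slip. I would present the operator-calculus version in the paper for brevity, since it makes transparent why the coefficients are all $\tfrac12$.
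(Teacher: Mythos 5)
Your argument is correct, and it carries out cleanly what the paper dismisses as "a straightforward check." The operator-calculus organization is sound: writing $\operatorname{ad}_a\operatorname{ad}_b\operatorname{ad}_a$ once as $\operatorname{ad}_a\operatorname{ad}_a\operatorname{ad}_b-\operatorname{ad}_a\operatorname{ad}_{[a,b]}$ and once as $\operatorname{ad}_b\operatorname{ad}_a\operatorname{ad}_a+\operatorname{ad}_{[a,b]}\operatorname{ad}_a$, averaging, and recognizing the commutator $\operatorname{ad}_{[a,b]}\operatorname{ad}_a-\operatorname{ad}_a\operatorname{ad}_{[a,b]}=-\operatorname{ad}_{[a,[a,b]]}$ produces exactly the three terms with coefficient $\tfrac12$, and the final sign flip $-[[a,[a,b]],c]=[c,[a,[a,b]]]$ is handled correctly. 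Since the paper gives no details, there is nothing to compare against beyond the fact that both are direct verifications; your ad-operator bookkeeping is a transparent way to present the computation and makes the appearance of the $\tfrac12$'s self-explanatory, whereas the brute-force expansion into six-letter associative monomials (which you mention as an alternative) is what one would fall back on if uncomfortable with the operator form of Jacobi.
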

\begin{proof}
This is a straightforward check.
\end{proof}

\begin{thm}\label{span}
The root vectors $\{X_{\alpha}\}_{\alpha\in\Phi_{2n+1}^+}$ and $\{\mathbf{d}_i\}_{1\leq i\leq n+1}$ span $\mathfrak{g}^{\theta}$.
\end{thm}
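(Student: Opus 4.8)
The plan is to show that the subspace $V = \operatorname{span}(\{X_\alpha\}_{\alpha\in\Phi^+_{2n+1}}\cup\{\mathbf d_i\}_{1\le i\le n+1})$ already exhausts $\mathfrak g^\theta$ by comparing dimensions after verifying that $V$ is contained in (hence equal to) a spanning set coming from Lemma~\ref{Kolb} and Lemma~\ref{hfixed}. Concretely, $\mathfrak g^\theta$ is generated as a Lie algebra by $\mathbf e_i,\mathbf f_i$ ($1\le i\le n$) together with $\mathfrak h^\theta$, and $\mathfrak h^\theta$ is $(n+1)$-dimensional, spanned by $\mathbf d_1,\dots,\mathbf d_{n+1}$. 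Since $\dim\mathfrak g^\theta = \dim\mathfrak g / 2$ up to the fixed part — more precisely $\dim\mathfrak g^\theta$ equals the number of $\tau$-orbits on the root system of $\mathfrak{gl}_{2n+1}$ plus $\dim\mathfrak h^\theta$ — one computes $\dim\mathfrak g^\theta = |\Phi^+_{2n+1}| + (n+1)$: each pair $\{\mu_i-\mu_j,\ \mu_{\tau(j)}-\mu_{\tau(i)}\}$ of positive roots (possibly a singleton when the pair is $\tau$-fixed, but here $\tau$ has no fixed positive root so every orbit is a genuine pair on the root side, while the $\pm$ pairing and the orbit pairing combine to give exactly one fixed vector per positive root) contributes one dimension. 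Thus it suffices to prove that $V$ contains a generating set and is closed under the bracket, or more efficiently that $V$ contains all of $\mathfrak g^\theta$'s generators and that $\dim V \le \dim \mathfrak g^\theta$, forcing equality.

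The key steps, in order, are as follows. First I would record that $\mathbf e_i = X_{\mu_i-\mu_{i+1}}$ and $\mathbf f_i = X_{\mu_{2n+1-i}-\mu_{2n+2-i}}$ by the very definition of the simple root vectors, so all Chevalley-type generators of $\mathfrak g^\theta$ lie in $V$, and together with $\mathbf d_1,\dots,\mathbf d_{n+1}\in V$ this shows $V$ contains a generating set for $\mathfrak g^\theta$ (invoking Lemma~\ref{Kolb} and Lemma~\ref{hfixed}). Second, I would show $V$ is a Lie subalgebra, i.e. stable under $\operatorname{ad}$; by bilinearity it is enough to check brackets of pairs drawn from $\{X_\alpha\}\cup\{\mathbf d_i\}$. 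The brackets $[\mathbf d_i, X_\alpha]$ are scalar multiples of $X_\alpha$ (this is the root-space statement, which one gets from relations (\ref{de})–(\ref{df}) extended recursively along the definition (\ref{rootvectorsinfixed})), and $[\mathbf d_i,\mathbf d_j]=0$. For $[X_\alpha, X_\beta]$, write $\alpha=\mu_i-\mu_j$ and $\beta=\mu_k-\mu_l$; splitting into the cases where the "intervals" $[i,j)$ and $[k,l)$ are disjoint (bracket is $0$ or a single $X$ by Corollary~\ref{independentofbrackets} and the non-adjacency remark), nested, or overlapping, one uses the recursive definition (\ref{rootvectorsinfixed}), the Jacobi manipulation in the proof of Corollary~\ref{independentofbrackets}, and Lemma~\ref{keyrewrite} to reduce every bracket either to $\pm X_{\alpha+\beta}$ (when $\alpha+\beta$ is again in $\Phi^+_{2n+1}$), to an element of $\mathfrak h^\theta\subseteq V$ (when $\alpha+\beta=0$ in the appropriate sense, e.g. $\mathbf e_i$ against $\mathbf f_i$, giving $\mathbf d_i-\mathbf d_{i+1}$ by (\ref{ef})), or to $0$; the exceptional near-$n$ relations (\ref{efn})–(\ref{fen}) handle the overlaps involving the middle node. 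Third, from stability under bracket plus containing the generators, $V=\mathfrak g^\theta$.

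Alternatively — and this is the cleaner route I would actually write up — I would avoid the full bracket bookkeeping: once $V$ contains the generators $\mathbf e_i,\mathbf f_i,\mathbf d_i$, it suffices to show $\dim V \ge \dim\mathfrak g^\theta$, since $V\subseteq\mathfrak g^\theta$ automatically. The spanning vectors are the $|\Phi^+_{2n+1}| = \binom{2n+1}{2}$ elements $X_\alpha$ plus $n+1$ elements $\mathbf d_i$; I would check these are linearly independent in $\mathfrak{gl}_{2n+1}$ by pairing with the PBW/matrix-unit basis — each $X_{\mu_i-\mu_j}$ with $i<j\le n+1$ or $n+1\le i<j$ has a "leading term" $E_{ij}$ coming from the $E$-part of $\mathbf e_i$'s and $F$-part of $\mathbf f_i$'s under the recursion, so the $X_\alpha$ are triangular with respect to the matrix-unit order and hence independent, and the $\mathbf d_i$ live in $\mathfrak h$ which meets the span of the (nonzero-weight) $X_\alpha$ trivially. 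Then $\dim V = \binom{2n+1}{2} + (n+1) = \dim\mathfrak g^\theta$ by the orbit count above, so $V=\mathfrak g^\theta$.

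The main obstacle I anticipate is not the linear independence but making the dimension count $\dim\mathfrak g^\theta = \binom{2n+1}{2}+(n+1)$ airtight: one must correctly account for how the involution $\theta$ acts on root vectors — it combines the diagram flip $\tau$ with the transpose-type exchange $E\leftrightarrow F$, so the $+1$ eigenspace on $\bigoplus_{\alpha\in\Phi}\mathfrak g_\alpha$ has dimension exactly $|\Phi^+| = \binom{2n+1}{2}$ (one fixed vector for each unordered... rather, for each positive root, since $\theta$ pairs $\mathfrak g_\alpha$ with $\mathfrak g_{-\tau(\alpha)}$ and $\tau$ permutes $\Phi^+$ without fixed points here), and then $\dim\mathfrak h^\theta = n+1$ from Lemma~\ref{hfixed}. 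Getting this eigenspace dimension exactly right — in particular confirming $\theta$ has no fixed vectors inside a single root space that would change the count — is the step that needs care; everything else is bookkeeping with the recursive definitions already set up in the excerpt.
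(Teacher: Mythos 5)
Your ``cleaner route'' via a dimension count is genuinely different from the paper's argument. The paper proves Theorem~\ref{span} by a fine-grained induction on the length of iterated adjoint brackets $\underline{\alpha}_{i_1}\circ\cdots\circ\underline{\alpha}_{i_s}(\mathbf d_k)$: Lemma~\ref{admissible} cuts the sum down to admissible sequences, Lemma~\ref{shuffle} shows that a permutation of consecutive simple roots collapses (up to scalar) to a single root vector, and then Lemma~\ref{keyrewrite} together with the relations of $\mathfrak g^\theta$ (in particular the near-$n$ identity that produces the constant $-4$) strip off repeated roots and reduce the length. Your Route~1 (``show $V$ is closed under the bracket'') is morally this same argument, and would require essentially all of that case bookkeeping to be made rigorous, so you were right to set it aside.

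Route~2 is a valid strategy, and the dimension formula $\dim\mathfrak g^\theta=\binom{2n+1}{2}+(n+1)$ is correct: $\theta$ sends $\mathfrak g_\alpha$ to $\mathfrak g_{\sigma(\alpha)}$ where $\sigma(\mu_i-\mu_j)=\mu_{2n+2-i}-\mu_{2n+2-j}$ is a bijection $\Phi^+\to\Phi^-$, so each positive root contributes exactly one fixed dimension in $\bigoplus_\alpha\mathfrak g_\alpha$, and $\dim\mathfrak h^\theta=n+1$. The genuine gap is in the linear-independence step. First, your ``leading term $E_{ij}$'' sketch is stated only for roots $\mu_i-\mu_j$ with $i<j\le n+1$ or $n+1\le i<j$, silently omitting the cross-over roots $i\le n<n+1<j$. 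Second, and more seriously, the $n$ root vectors $X_{\mu_i-\mu_{2n+2-i}}$ with $s$-weight $0$ acquire nonzero Cartan components under the recursion: already for $n=1$ one computes $X_{\mu_1-\mu_3}=[E_{12}+E_{32},\,E_{21}+E_{23}]=E_{13}+E_{31}+H_1-2H_2+H_3$. So the sentence ``the $\mathbf d_i$ live in $\mathfrak h$ which meets the span of the (nonzero-weight) $X_\alpha$ trivially'' does not, by itself, separate the $\mathbf d_i$ from these zero-weight $X_\alpha$. The triangularity idea can be repaired by projecting onto the strictly upper-triangular part, under which the $\mathbf d_i$ vanish and one hopes $X_{\mu_i-\mu_j}\mapsto c\,E_{ij}$ with $c\neq0$; but proving the coefficient never vanishes requires an induction along the defining recursion that is not in your sketch and is also needed to rule out some $X_\alpha=0$ (which would deflate $\dim V$). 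Note that the paper establishes linear independence in Theorem~\ref{basisoffixedpoint} by a completely different mechanism: it computes the images under $\phi$ (Lemma~\ref{rootvectors}) and observes those are independent.
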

\begin{proof}
Let $(\mathfrak{g}^{\theta})'=[\mathfrak{g}^{\theta},\mathfrak{g}^{\theta}]$. Recall $\mathfrak{h}^{\theta}$ is the subalgebra spanned by $\mathbf{d}_i$, $1\leq i\leq n+1$. Since $[\mathfrak{g}^{\theta},\mathfrak{h}^{\theta}]\subset (\mathfrak{g}^{\theta})'$, it suffices to show root vectors span $(\mathfrak{g}^{\theta})'$.

Given an arbitrary element 
\begin{align}\label{generalelement}
\aaa_{i_1}\circ\aaa_{i_2}\circ \cdots \circ \aaa_{i_s}(\mathbf{d}_k) \in (\mathfrak{g}^{\theta})',
\end{align}
where $\alpha_{i_1}$, \dots, $\alpha_{i_s}$ are simple roots in $\mathfrak{gl}_{2n+1}(\mathbb{C})$ (with possible repetitions.) We claim it is in the span of root vectors whose expression only involves the roots $\alpha_{i_1}$, \dots, $\alpha_{i_s}$. 

We prove this by induction on $s$. Suppose the statement is true for $s=t$. Then for $s=t+1$, by Lemma~\ref{admissible}, if (\ref{generalelement}) is nonzero, then the sequence $(\alpha_{i_2},\dots,\alpha_{i_{t+1}})$ is admissible. If there is no repetition of simple roots among them, then the admissible condition implies that they are a permutation of a sequence of consecutive simple roots, and by Lemma~\ref{shuffle} is a root vector.

Now we discuss the case when there are repeated roots. List all roots which sit at the left endpoint of such a pair of repeated roots, and choose the rightmost one, $\alpha_{i_\ell}$, among them. By design, this root has a twin counterpart to its right, but no pairs of repeated roots to its right. By the induction on length, 
\begin{align}\label{general2}
\aaa_{i_{\ell+1}}\circ\cdots\circ \aaa_{i_{t+1}}(\mathbf{d}_k)
\end{align}
is in the span of root vectors concerning $\alpha_{i_{\ell+1}}$, \dots, $\alpha_{i_{t+1}}$ for any $k$. Therefore (\ref{generalelement}) is a linear combination of elements the form
\begin{align}
\aaa_{i_1}\circ\cdots\circ \aaa_{i_\ell} \circ \beta_1 \circ \beta_2\circ \cdots\circ \beta_p(\mathbf{d}_k) \label{general2}
\end{align}
where $(\beta_1,\beta_2,\cdots, \beta_p)$ is an increasing sequence of consecutive simple roots, and $\alpha=\alpha_{i_\ell}=\beta_j$ for some $1\leq j\leq p$. Notice that if two simple roots $\gamma_1$ and $\gamma_2$ are nonadjacent, then for any $z\in \mathfrak{g}^{\theta}$,
\begin{align*}
[X_{\gamma_1},[X_{\gamma_2},z]]=[[X_{\gamma_1},X_{\gamma_2}],z]+[[X_{\gamma_1},z],X_{\gamma_2}]=-[X_{\gamma_2},[X_{\gamma_1},z]],
\end{align*}
therefore up to a sign, one can move $\alpha_{i_{\ell}}$ past all nonadjacent roots to its right in (\ref{general2}), such that its rightmost end is of the form

1) $\aaa\circ \aaa \circ \underline{\gamma}(y)$,
where $\gamma$ is immediately larger than $\alpha$ and y is a root vector whose expression contains only roots nonadjacent to $\alpha$; or

2) $\aaa\circ \bbb \circ \aaa \circ \underline{\gamma}(y)$

where $(\beta,\alpha,\gamma)$ is an increasing sequence of consecutive simple roots, $y$ is a root vector whose expression only contains roots nonadjacent to $\alpha$ or $\beta$.

In Case 1),
\begin{align*}
[X_{\alpha},[X_{\alpha},[X_{\gamma},y]]]=[X_{\alpha},[[X_{\alpha},X_{\gamma}],y]]=[[X_{\alpha},[X_{\alpha},X_{\gamma}]],y]=c[X_{\alpha},y] .
\end{align*}
Here
\begin{align*}
c=\begin{cases}
-4  \hspace{.2 in} \text{if } \{\alpha,\beta\}=\{\ep_n-\ep_{n+1},-\ep_n+\ep_{n+1}\}\\
0 \hspace{.2 in}\text{otherwise}.
\end{cases}
\end{align*} 
Therefore (\ref{general2}) can be written as an element in the form of (\ref{generalelement}) of strictly lower length by the relations in $\mathfrak{g}^{\theta}$.

In Case 2)
\begin{align*}
\begin{split}
[X_{\alpha},[X_{\beta},[X_{\alpha},[X_{\gamma},y]]]]& =[X_{\alpha},[X_{\beta},[[X_{\alpha},X_{\gamma}],y]]]\\
&=[X_{\alpha},[[X_{\beta},[X_{\alpha},X_{\gamma}]],y]]=[[X_{\alpha},[X_{\beta},[X_{\alpha},X_{\gamma}]]],y].
\end{split}
\end{align*}
By Lemma~\ref{keyrewrite}
\begin{align}
[X_{\alpha},[X_{\beta},[X_{\alpha},X_{\gamma}]]]=\frac{1}{2}[X_{\beta},[X_{\alpha},[X_{\alpha},X_{\gamma}]]] ,\notag \\
+\frac{1}{2}[X_{\gamma},[X_{\alpha},[X_{\alpha},X_{\beta}]]]+\frac{1}{2}[X_{\alpha},[X_{\alpha},[X_{\beta},X_{\gamma}]]] .\label{general3}
\end{align}
The third term is of the form in Case 1), therefore (\ref{general3}) is in
\begin{align*}
\mathbb{C}[X_{\alpha},X_{\beta}]\oplus \mathbb{C}[X_{\alpha},X_{\gamma}].
\end{align*}
After taking the bracket with $y$, one can use the jacobi identity to rewrite the ending part in (\ref{general2}), so that it is of the form (\ref{generalelement}) of smaller length.
\end{proof}

For $\alpha\in\Phi_{n+1}$, an element $x\in \mathfrak{g}^{\theta}$ is a weight vector of weight $\alpha$, under the adjoint action of $\mathbf{d}_1,\dots,\mathbf{d}_n$, $\mathbf{d}_{n+1}'$, if and only if 
\begin{align*}
[\mathbf{d}_i,x]=\alpha(\mathbf{d}_i)x \hspace{.2 in} (1\leq i\leq n) \hspace{.3 in}
[\mathbf{d}'_{n+1},x]=\alpha(\mathbf{d}'_{n+1})x.
\end{align*}

Define a map $s: \Phi_{2n+1}^+\to \Phi_{n+1}\cup\{0\}$, such that on a simple root $\alpha \in \Pi_{2n+1}$, $X_{\alpha}$ is a weight vector of weight $s(\alpha)$. That is to say,
\begin{align*}
s(\mu_i-\mu_{i+1})=&\epsilon_i-\epsilon_{i+1} \hspace{.2 in} (1\leq i\leq n), \\
s(\mu_i-\mu_{i+1})=&-\epsilon_{2n+1-i}+\epsilon_{2n+2-i}  \hspace{.2 in} (n+1\leq i\leq 2n).
\end{align*} 
Extend $s$ linearly so that on a sum of simple roots $\alpha=\alpha_1+\cdots \alpha_i \in \Phi_{2n+1}^+$, 
\begin{align*}
s(\alpha)=s(\alpha_1)+\cdots+s(\alpha_i).
\end{align*}
The map $s$ specifies the weight of each root vector.

\begin{lemma}\label{correctweight}
Let $\alpha\in \Phi_{2n+1}^+$, then $X_{\alpha}$ is of weight $s(\alpha)$ under the adjoint action of $\mathbf{d}_1,\dots,\mathbf{d}_n,\mathbf{d}'_{n+1}$.
\end{lemma}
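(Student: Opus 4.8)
The plan is to prove Lemma~\ref{correctweight} by induction on the length $\ell(\alpha)$ of the positive root $\alpha=\mu_i-\mu_j$, i.e. on $j-i$. The base case is $\ell(\alpha)=1$, where $\alpha$ is a simple root; here $X_\alpha$ is one of the generators $\e_i$ ($1\le i\le n$) or $\f_{2n+1-i}$ ($n+1\le i\le 2n$), and the claim that $X_\alpha$ has weight $s(\alpha)$ under $\operatorname{ad}\D_1,\dots,\operatorname{ad}\D_n,\operatorname{ad}\D'_{n+1}$ is exactly a reading of relations~(\ref{de}) and~(\ref{df}) after dividing the $\D_{n+1}$-action by $2$. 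Concretely, for $\alpha=\mu_i-\mu_{i+1}$ with $i\le n$, (\ref{de}) gives $[\D_a,\e_i]=(-\delta_{a,i+1}-\delta_{2n+2-a,i+1}+\delta_{a,i})\e_i$; for $a\le n$ the term $\delta_{2n+2-a,i+1}$ vanishes (since $2n+2-a\ge n+2>i+1$ when $i\le n$), so $[\D_a,\e_i]=(\delta_{a,i}-\delta_{a,i+1})\e_i$, which is $(\epsilon_i-\epsilon_{i+1})(\D_a)\,\e_i=s(\alpha)(\D_a)\e_i$; and $[\D'_{n+1},\e_i]=\tfrac12[\D_{n+1},\e_i]=0=s(\alpha)(\D'_{n+1})\e_i$ unless $i=n$, in which case $[\D_{n+1},\e_n]=-2\e_n$ (both the $\delta_{a,j+1}$ and $\delta_{2n+2-a,j+1}$ fire), so $[\D'_{n+1},\e_n]=-\e_n=(\epsilon_n-\epsilon_{n+1})(\D'_{n+1})\e_n$, consistent with $s(\mu_n-\mu_{n+1})=\epsilon_n-\epsilon_{n+1}$ and $\epsilon_{n+1}=(\D'_{n+1})^*$. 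The symmetric computation with~(\ref{df}) handles $\f_{2n+1-i}$ for $n+1\le i\le 2n$.

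For the inductive step, write $\alpha=\mu_i-\mu_j$ with $j-i\ge 2$ and use the defining recursion~(\ref{rootvectorsinfixed}): $X_\alpha=[X_{\mu_i-\mu_{i+1}},X_{\mu_{i+1}-\mu_j}]$. Set $\beta=\mu_i-\mu_{i+1}$ (simple) and $\gamma=\mu_{i+1}-\mu_j$ (shorter), so $X_\alpha=[X_\beta,X_\gamma]$, and by the base case $X_\beta$ has weight $s(\beta)$ while by induction $X_\gamma$ has weight $s(\gamma)$. The standard bracket-of-weight-vectors identity then applies: for each $d\in\{\D_1,\dots,\D_n,\D'_{n+1}\}$,
\begin{align*}
[d,[X_\beta,X_\gamma]] &= [[d,X_\beta],X_\gamma]+[X_\beta,[d,X_\gamma]]\\
&= s(\beta)(d)[X_\beta,X_\gamma]+s(\gamma)(d)[X_\beta,X_\gamma]=(s(\beta)+s(\gamma))(d)\,X_\alpha,
\end{align*}
and since $s$ was defined to be additive on sums of simple roots, $s(\beta)+s(\gamma)=s(\alpha)$. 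This gives $[d,X_\alpha]=s(\alpha)(d)X_\alpha$, completing the induction. The same argument verbatim covers the roots $\mu_i-\mu_j$ with $n+1\le i<j$ and the "mixed" roots straddling the middle, since the recursion and the additivity of $s$ do not distinguish them.

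I do not expect a genuine obstacle here; the lemma is essentially bookkeeping built on the already-established relations~(\ref{de})--(\ref{df}) and the well-definedness of the recursion. The one point requiring a little care is the normalization asymmetry at the middle node: the generator $\e_n$ (and $\f_n$) interacts with $\D_{n+1}$ with an extra factor of $2$ compared with the generic pattern, which is precisely why the lemma is phrased in terms of $\D'_{n+1}=\tfrac12\D_{n+1}$ and $\epsilon_{n+1}=(\D'_{n+1})^*$; one must check that after this rescaling the weight of $\e_n$ really is $\epsilon_n-\epsilon_{n+1}$ and, more importantly, that every longer root vector whose recursive expression passes through the middle node inherits the correct $\D'_{n+1}$-eigenvalue — but this is automatic from the bracket identity above once the base case at $\e_n$ is checked. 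A secondary bookkeeping check is that the recursion~(\ref{rootvectorsinfixed}) together with Corollary~\ref{independentofbrackets} makes $X_\alpha$ unambiguous, so the weight computed via one splitting agrees with any other; I would simply cite Corollary~\ref{independentofbrackets} for this and not belabor it.
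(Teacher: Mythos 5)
Your proof is correct and is essentially the paper's own argument: verify the weight on simple-root vectors directly from relations~(\ref{de})--(\ref{df}) (with the $\D'_{n+1}=\tfrac12\D_{n+1}$ normalization at the middle node), then propagate to general $\alpha$ via the Jacobi identity for $\operatorname{ad}\D_i$ on the bracket $[X_\beta,X_\gamma]$, using additivity of $s$. The paper states this in one line; your write-up just spells out the base case more carefully.
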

\begin{proof}
The statement is clear on a simple root vector. On a sum of simple roots, it follows from the calculation
\begin{align*}
[\mathbf{d}_i,[X_{\alpha},X_{\beta}]]=[[\mathbf{d}_i,X_{\alpha}],X_{\beta}]+[[\mathbf{d}_i,X_{\beta}],X_{\alpha}]=s(\alpha+\beta)(\mathbf{d}_i)[X_{\alpha},X_{\beta}].
\end{align*}
The lemma is proved.
\end{proof}

\begin{rem}
We can thus view $\Phi_{n+1}$ as the set of roots for $\mathfrak{g}^{\theta}$.
\end{rem}

Recall $\phi$ is the homomorphism defined in Corollary~\ref{homo}. We can describe the stalks of $s$ explicitly.
\begin{prop}\label{stalksofweights}
For the map $s$ defined above and $1\leq i<j\leq n$,
\begin{align}
s^{-1}(0)=&\{\mu_i-\mu_{2n+2-i}\hspace{.1 in}|\hspace{.1 in} 1\leq i\leq n\}, \notag \\
s^{-1}(\epsilon_i-\epsilon_{n+1})=&\{\mu_i-\mu_{n+1}\}, \hspace{.3 in}  \notag \\
s^{-1}(-\epsilon_i+\epsilon_{n+1})=&\{\mu_{n+1}-\mu_{2n+2-i}\}, \hspace{.3 in}  \notag \\
s^{-1}(\epsilon_i-\epsilon_j)=& \{\mu_i-\mu_j,\mu_i-\mu_{2n+2-j}\}, \notag\\
s^{-1}(-\epsilon_i+\epsilon_j)=& \{\mu_{2n+2-j}-\mu_{2n+2-i},\mu_{j}-\mu_{2n+2-i}\}.\notag
\end{align}
\end{prop}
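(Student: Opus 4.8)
The plan is to compute, for each simple root $\alpha_i \in \Pi_{2n+1}$, the value $s(\alpha_i)$ directly from the defining formula, and then to go root-by-root over all positive roots $\mu_i - \mu_j$ with $1 \le i < j \le 2n+1$, writing each as a sum of consecutive simple roots and applying $s$ additively. Since $s$ is linear on sums of simple roots by construction, the computation of $s(\mu_i - \mu_j) = \sum_{k=i}^{j-1} s(\mu_k - \mu_{k+1})$ reduces to telescoping, and the only subtlety is bookkeeping across the "middle" index $n+1$, where the formula for $s$ on simple roots changes regime.

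First I would record the two telescoping identities: for $i < j \le n+1$ we get $s(\mu_i - \mu_j) = \sum_{k=i}^{j-1}(\epsilon_k - \epsilon_{k+1}) = \epsilon_i - \epsilon_j$ (this already covers the cases landing on $\epsilon_i - \epsilon_{n+1}$, taking $j = n+1$); and for $n+1 \le i < j$ we get $s(\mu_i - \mu_j) = \sum_{k=i}^{j-1}(-\epsilon_{2n+1-k} + \epsilon_{2n+2-k}) = -\epsilon_{2n+1-i} + \epsilon_{2n+2-j}$. Note in the second identity, setting $i = n+1$ yields $-\epsilon_n + \epsilon_{2n+2-j}$; and if additionally $j = 2n+2-i'$ for $1 \le i' \le n$, this is $-\epsilon_n + \epsilon_{i'}$... wait, one must be careful: with $j = 2n+2-i'$ we get $\epsilon_{2n+2-j} = \epsilon_{i'}$, so $s(\mu_{n+1} - \mu_{2n+2-i'}) = -\epsilon_n + \epsilon_{i'}$? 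That does not match $-\epsilon_{i'} + \epsilon_{n+1}$ unless... so I would recheck: the claim is $s^{-1}(-\epsilon_i + \epsilon_{n+1}) = \{\mu_{n+1} - \mu_{2n+2-i}\}$, so I expect $s(\mu_{n+1} - \mu_{2n+2-i}) = -\epsilon_i + \epsilon_{n+1}$. Re-deriving: the simple roots involved are $\mu_{n+1}-\mu_{n+2}, \dots, \mu_{2n+1-i} - \mu_{2n+2-i}$, i.e. index $k$ running from $n+1$ to $2n+1-i$, with $s(\mu_k - \mu_{k+1}) = -\epsilon_{2n+1-k} + \epsilon_{2n+2-k}$; summing telescopes to $-\epsilon_{2n+1-(n+1)} + \epsilon_{2n+2-(2n+1-i)} = -\epsilon_n + \epsilon_{i+1}$. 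This still does not obviously match — so the genuine content of the proof is to carefully track these index shifts and confirm the stated formulas; I would not be surprised if the cleanest route is instead to observe $\phi(X_{\mu_i - \mu_j})$ lands in a specific matrix-unit span and read off the weight from there, using $\phi(\mathbf d_i) = h_i + h_{\bar i}$.

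Concretely, the spanning step I would take is: for a general positive root $\mu_i - \mu_j$, split into the three mutually exclusive cases according to whether $j \le n+1$ (then $i < j \le n+1$, giving $\epsilon_i - \epsilon_j$, which is the $\epsilon_i - \epsilon_{n+1}$ family when $j = n+1$ and the two-element-fiber family $\epsilon_i - \epsilon_j$ otherwise — the second preimage $\mu_i - \mu_{2n+2-j}$ comes from the next case); whether $i \ge n+1$ (the reversed family); and whether $i \le n < n+1 < j$ straddles the center, which is where the fibers of size two and the fiber over $0$ (namely $j = 2n+2-i$) arise. For the straddling case $i \le n$, $j \ge n+2$, I would write $\mu_i - \mu_j = (\mu_i - \mu_{n+1}) + (\mu_{n+1} - \mu_j)$ and add the two previously computed values, then check against $2n+2-j$ versus $i$ to separate $s(\alpha) = 0$ (when $j = 2n+2-i$) from the nonzero two-element fibers. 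Finally I would verify surjectivity of $s$ onto $\Phi_{n+1} \cup \{0\}$ and that no positive roots have been omitted, by a count: $|\Phi_{2n+1}^+| = \binom{2n+1}{2} = n(2n+1)$, which should match $n$ (for the fiber over $0$) $+\, 2n$ (the two $\pm(\epsilon_i - \epsilon_{n+1})$ singleton families, $n$ each) $+\, 2 \cdot 2\binom{n}{2}$ (the two two-element-fiber families) $= n + 2n + 2n(n-1) = 2n^2 + n$, which indeed equals $n(2n+1)$.

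The main obstacle I anticipate is purely the index arithmetic across the reflection $k \mapsto 2n+2-k$: several of the stated fibers require reconciling an expression like $-\epsilon_{2n+1-i} + \epsilon_{2n+2-j}$ with the target $-\epsilon_a + \epsilon_b$, and getting the off-by-one conventions exactly right (the asymmetry between $2n+1$ and $2n+2$ in $\mathbf d_i = H_i + H_{2n+2-i}$ versus $\tau(i) = 2n+1-i$ is a recurring trap). It may well be cleaner to prove the proposition by applying $\phi$ and invoking Theorem~\ref{rootvectors}: since $\phi(X_\alpha) = Y_{s(\alpha)}$ (or $Y_{s(\alpha)} + Z_{s(\alpha)}$), and $\phi$ is injective, the fibers of $s$ are read off from which $\mu_i - \mu_j$ map to the same root vector in $\mathfrak{gl}_{n+1} \oplus \mathfrak{gl}_n$ — but since Theorem~\ref{rootvectors} appears later, within this section I would stick with the direct telescoping computation and simply be scrupulous about the bookkeeping, presenting it as a short case analysis with the five cases matching the five displayed formulas.
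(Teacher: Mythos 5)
Your overall plan---compute $s(\mu_i-\mu_j)$ for every positive root $\mu_i-\mu_j\in\Phi_{2n+1}^+$ by telescoping over the simple roots, split into the three regimes $j\le n+1$, $i\ge n+1$, and $i\le n<j$, and verify exhaustiveness by a dimension count---is sound, and it is in fact the reverse direction of what the paper does. The paper starts from each target in $\Phi_{n+1}\cup\{0\}$, expresses it as a sum of $s$-values of simple roots, and reads off the corresponding positive root in $\Phi_{2n+1}^+$; you instead start from $\Phi_{2n+1}^+$ and push forward. Both are legitimate, and your closing count $n + 2n + 2\cdot 2\binom{n}{2} = n(2n+1) = |\Phi_{2n+1}^+|$ is a useful sanity check that the paper omits.

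However, there is a concrete arithmetic error you noticed but did not resolve, and it is not a nuisance of bookkeeping: your telescoping formula for the regime $n+1\le i<j$ is wrong. The $k$-th term $-\epsilon_{2n+1-k}+\epsilon_{2n+2-k}$ has the shape $-a_k + a_{k-1}$ where $a_k := \epsilon_{2n+1-k}$, since $\epsilon_{2n+2-k}=\epsilon_{2n+1-(k-1)}$. Therefore the negative piece at index $k$ cancels the positive piece at index $k+1$, and what survives the sum $\sum_{k=i}^{j-1}$ is the positive piece from the \emph{first} term and the negative piece from the \emph{last} term, giving
\[
s(\mu_i-\mu_j) \;=\; \epsilon_{2n+2-i} - \epsilon_{2n+2-j} \qquad (n+1\le i<j\le 2n+1),
\]
not the $-\epsilon_{2n+1-i}+\epsilon_{2n+2-j}$ you wrote (the latter collapses to $0$ already at $i=n+1$, $j=n+2$, where the correct value is $-\epsilon_n+\epsilon_{n+1}$). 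With the corrected formula, the trial computation you flagged works out: taking $i=n+1$ and $j=2n+2-i'$ gives $\epsilon_{n+1}-\epsilon_{i'}$, exactly as claimed in the proposition. Once this is fixed, your straddling case $i\le n$, $j\ge n+2$ via $\mu_i-\mu_j=(\mu_i-\mu_{n+1})+(\mu_{n+1}-\mu_j)$ yields $s(\mu_i-\mu_j)=\epsilon_i-\epsilon_{2n+2-j}$, which cleanly produces the $0$-fiber when $j=2n+2-i$ and the second element of each two-element fiber otherwise, and the whole argument closes. You should carry through the corrected telescoping rather than punting to the ``apply $\phi$'' route, which would be circular in the paper's ordering since Lemma~\ref{rootvectors} comes after this proposition and in fact uses it.
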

\begin{proof}
Notice that
\begin{align*}
0=&(\epsilon_i-\epsilon_{i+1})+\cdots+(\epsilon_n-\epsilon_{n+1})+(-\epsilon_{n}+\epsilon_{n+1})+\cdots+(-\epsilon_i+\epsilon_{i+1}).
\end{align*}
Therefore the element simplifies to be 
\begin{align*}
&s^{-1}(\epsilon_i-\epsilon_{i+1})+\cdots+s^{-1}(\epsilon_n-\epsilon_{n+1})+s^{-1}(-\epsilon_{n}+\epsilon_{n+1})+\cdots+s^{-1}(-\epsilon_i+\epsilon_{i+1})\\
=&(\mu_i-\mu_{i+1})+\cdots+(\mu_n-\mu_{n+1})+(\mu_{n+1}-\mu_{n+2})+\cdots+(\mu_{2n+1-i}-\mu_{2n+2-i})\\
=&\mu_i-\mu_{2n+2-i},
\end{align*}
which is in $s^{-1}(0)$. Also, when $i<j<n+1$,
\begin{align*}
\epsilon_i-\epsilon_j=&(\epsilon_i-\epsilon_{i+1})+\cdots+(\ep_{j-1}-\ep_{j})\\
&+(\ep_j-\ep_{j+1})+\cdots+(\epsilon_n-\epsilon_{n+1})+(-\epsilon_n+\epsilon_{n+1})+\cdots+(-\epsilon_j+\epsilon_{j+1})
\end{align*}
Therefore, we have
\begin{align*}
&s^{-1}(\epsilon_i-\epsilon_{i+1})+\cdots+s^{-1}(\epsilon_n-\epsilon_{n+1})+s^{-1}(-\epsilon_n+\epsilon_{n+1})+\cdots+s^{-1}(-\epsilon_j+\epsilon_{j+1})\\
=&(\mu_i-\mu_{i+1})+\cdots+(\mu_n-\mu_{n+1})+(\mu_{n+1}-\mu_{n+2})+\cdots+(\mu_{2n+1-j}-\mu_{2n+2-j})\\
=&\mu_i-\mu_{2n+2-j},
\end{align*}
which is also in $s^{-1}(\epsilon_i-\epsilon_j)$, in addition to $\mu_i-\mu_j$.
The other cases can be checked via a similar calculation.
\end{proof}

\subsection{An algebra isomorphism with $U(\mathfrak{gl}_{n+1}\oplus\mathfrak{gl}_n)$}

Based on the previous lemma, we re-index the root vectors using weights of $\D_i$. In particular, for $1\leq i<j\leq n$, let 
\begin{align*}
X'_{\epsilon_i-\epsilon_j}=&X_{\mu_i-\mu_j} \hspace{.3 in}  X_{\epsilon_i-\epsilon_j}=\frac{1}{2}
X_{\mu_i-\mu_{2n+2-j}}\\
X'_{-\epsilon_i+\epsilon_j}=&X_{\mu_{2n+2-j}-\mu_{2n+2-i}} \hspace{.3 in}    X_{-\epsilon_i+\epsilon_j}=\frac{1}{2}X_{\mu_{j}-\mu_{2n+2-i}} .
\end{align*}
In addition, for $1\leq i \leq n$, let 
\begin{align*}
X_{\epsilon_i-\epsilon_{n+1}}=\frac{1}{2}X_{\mu_i-\mu_{n+1}}, \hspace{.3 in} X_{-\epsilon_i+\epsilon_{n+1}}=X_{-\mu_i+\mu_{n+1}}.
\end{align*}
Also, define $\mathbf{h}_i$ for $1\leq i\leq n+1$ recursively via  $\mathbf{h}_{n+1}=\mathbf{d}_{n+1}'$ and 
\begin{align}
 \mathbf{h}_i=\mathbf{h}_{i+1}+\frac{1}{2}X_{\mu_i-\mu_{2n+2-i}} \hspace{.1 in } (1\leq i\leq n). \label{preimageofh}
\end{align}

Recall $\Phi_{n+1}$ and $\Phi_n$ are the set defined in (\ref{rootsing}) and (\ref{rootsing2}), and $\phi$  is the map defined after  (\ref{phi}). The following lemma establishes a connection between the various root vectors.
\begin{lemma}\label{rootvectors}
The following holds
\begin{align}
\phi(X_{\alpha})=&Y_{\alpha} \hspace{.3 in}(\forall \alpha\in \Phi_{n+1} ), \label{imageofroot1}\\
\phi(X'_{\alpha})=&Y_{\alpha}+Z_{\alpha} \hspace{.3 in}(\forall \alpha\in \Phi_n),\label{imageofroot2}\\
\phi(\mathbf{h}_i)=&h_i \hspace{.3 in}(1\leq i\leq n+1). \label{imageofroot3}
\end{align}

\end{lemma}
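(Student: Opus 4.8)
The plan is to prove all three identities (\ref{imageofroot1})--(\ref{imageofroot3}) simultaneously by induction on the height of the root (and, for (\ref{imageofroot3}), by downward induction on $i$), exploiting that $\phi$ is an algebra homomorphism (Lemma~\ref{phi}) and that all the root vectors in sight — the $X_\alpha$, $X'_\alpha$ in $\mathfrak{g}^\theta$, the $Y_\alpha$ in $\mathfrak{gl}_{n+1}$, and the $Z_\alpha$ in $\mathfrak{gl}_n$ — are defined by the \emph{same} nested-bracket recursions (\ref{rootvectoringln}), (\ref{negativerootvectorgln}), (\ref{rootvectorsinfixed}). Since $\phi$ intertwines brackets, once we know the images of the building blocks we can simply push $\phi$ through each recursion.

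\textbf{Base cases.} First I would check the simple-root vectors directly from the definitions. For $\alpha=\epsilon_i-\epsilon_{i+1}$ with $i<n$, we have $X_\alpha = X'_\alpha = \mathbf{e}_i$ in the re-indexed notation only when $\alpha$ involves index $n+1$ appropriately — here one must be careful: by the re-indexing, $X_{\mu_i-\mu_{i+1}} = \mathbf{e}_i$ maps under $\phi$ to $e_i+e_{\overline{i}}$, which is precisely $Y_{\epsilon_i-\epsilon_{i+1}}+Z_{\epsilon_i-\epsilon_{i+1}}$, giving (\ref{imageofroot2}) at height one; while the ``halved'' vectors like $X_{\epsilon_i-\epsilon_{n+1}} = \tfrac12 X_{\mu_i-\mu_{n+1}}$ and $X_{\mu_n-\mu_{n+1}}=\mathbf{e}_n \mapsto 2e_n$ produce the unbarred $Y_\alpha$ alone after the factor of $\tfrac12$, giving (\ref{imageofroot1}). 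The negative simple roots are handled the same way using $\phi(\mathbf{f}_i)=f_i+f_{\overline i}$ and $\phi(\mathbf{f}_n)=f_n$. For (\ref{imageofroot3}) the base case is $\mathbf{h}_{n+1}=\mathbf{d}'_{n+1}=\tfrac12\mathbf{d}_{n+1}$, and $\phi(\mathbf{d}_{n+1})=2h_{n+1}$ gives $\phi(\mathbf{h}_{n+1})=h_{n+1}$.

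\textbf{Inductive step.} For a non-simple root $\alpha=\epsilon_i-\epsilon_j$, write $\alpha=(\epsilon_i-\epsilon_{i+1})+(\epsilon_{i+1}-\epsilon_j)$ and apply $\phi$ to the defining bracket relation for $X'_\alpha$ (resp.\ $X_\alpha$), using Corollary~\ref{independentofbrackets} to choose whichever splitting is cleanest. Because $\phi$ is a homomorphism, $\phi(X'_\alpha)=[\phi(X'_{\epsilon_i-\epsilon_{i+1}}),\phi(X'_{\epsilon_{i+1}-\epsilon_j})] = [Y_{\epsilon_i-\epsilon_{i+1}}+Z_{\epsilon_i-\epsilon_{i+1}},\,Y_{\epsilon_{i+1}-\epsilon_j}+Z_{\epsilon_{i+1}-\epsilon_j}]$; since $\mathfrak{gl}_{n+1}$ and $\mathfrak{gl}_n$ live in complementary blocks the cross-brackets vanish, so this equals $Y_\alpha+Z_\alpha$ by the recursions (\ref{rootvectoringln}), completing (\ref{imageofroot2}); the argument for (\ref{imageofroot1}) is identical with the $Z$-terms absent. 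For (\ref{imageofroot3}), apply $\phi$ to (\ref{preimageofh}): $\phi(\mathbf{h}_i)=\phi(\mathbf{h}_{i+1})+\tfrac12\phi(X_{\mu_i-\mu_{2n+2-i}})$. By downward induction $\phi(\mathbf{h}_{i+1})=h_{i+1}$, and the second term is $\tfrac12\phi(X_{\mu_i-\mu_{2n+2-i}})$, which by Proposition~\ref{stalksofweights} is a weight-zero vector and needs to be identified — this is where the main work sits.

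\textbf{Main obstacle.} The delicate point is pinning down $\phi(X_{\mu_i-\mu_{2n+2-i}})$, the zero-weight root vector appearing in the recursion for $\mathbf{h}_i$. One writes $\mu_i-\mu_{2n+2-i}$ as the telescoping sum in the proof of Proposition~\ref{stalksofweights}, namely $(\mu_i-\mu_{i+1})+\cdots+(\mu_n-\mu_{n+1})+(\mu_{n+1}-\mu_{n+2})+\cdots+(\mu_{2n+1-i}-\mu_{2n+2-i})$, so that $X_{\mu_i-\mu_{2n+2-i}}$ is an iterated bracket of $\mathbf{e}_i,\dots,\mathbf{e}_n$ followed by $\mathbf{f}_n,\dots,\mathbf{f}_i$; applying $\phi$ turns this into an iterated bracket in $\mathfrak{gl}_{n+1}\oplus\mathfrak{gl}_n$ that one must evaluate carefully. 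The expected outcome is $\phi(X_{\mu_i-\mu_{2n+2-i}}) = 2(h_i - h_{i+1})$ (consistently with $\phi(\mathbf{h}_i)-\phi(\mathbf{h}_{i+1})=h_i-h_{i+1}$), but verifying this requires tracking how the bracket of a ``long'' $E$-type string with the reversed $F$-type string collapses onto the Cartan, being attentive to the doubling coming from $\phi(\mathbf{e}_n)=2e_n$ versus $\phi(\mathbf{f}_n)=f_n$ and the factor $\tfrac12$ built into the re-indexed $X$'s near index $n+1$. Once this single computation is in hand, the three identities close up under the double induction.
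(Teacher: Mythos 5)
Your overall scheme — push $\phi$ through the bracket recursions and run a mutual induction on (\ref{imageofroot1})--(\ref{imageofroot3}) — is the same as the paper's, and you correctly single out the zero-weight vector $X_{\mu_i-\mu_{2n+2-i}}$ as the crux. But there are two genuine gaps.

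First, you treat the evaluation of $\phi(X_{\mu_i-\mu_{2n+2-i}})$ as a free-standing ``long $E$-string against $F$-string'' computation that you do not carry out, and the direct approach is painful. The paper instead exploits the defining recursion (\ref{preimageofh}) itself: Corollary~\ref{independentofbrackets} lets you peel off the outermost simple roots to write $X_{\mu_i-\mu_{2n+2-i}} = [\mathbf{e}_i,[X_{\mu_{i+1}-\mu_{2n+1-i}},\mathbf{f}_i]]$, and since $X_{\mu_{i+1}-\mu_{2n+1-i}} = 2(\mathbf{h}_{i+1}-\mathbf{h}_{i+2})$ by (\ref{preimageofh}), the downward induction on (\ref{imageofroot3}) turns the crux into a trivial triple bracket $[e_i+e_{\overline i},[h_{i+1}-h_{i+2},f_i+f_{\overline i}]]=h_i-h_{i+1}$. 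The ``expected outcome'' $\phi(X_{\mu_i-\mu_{2n+2-i}})=2(h_i-h_{i+1})$ you state is precisely the downward-inductive hypothesis, not a separate computation, so treating it as one is circular.

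Second, your base cases for (\ref{imageofroot1}) are incomplete, and your claim that ``the argument for (\ref{imageofroot1}) is identical with the $Z$-terms absent'' hides a real dependency. For $\alpha=\epsilon_{j-1}-\epsilon_j$ with $j\le n$, the unprimed vector $X_\alpha=\tfrac12 X_{\mu_{j-1}-\mu_{2n+2-j}}$ is \emph{not} a simple root vector — it is a $\mu$-bracket of length $2n+3-2j\ge 3$ that again passes through the middle, and identifying $\phi(X_\alpha)=e_{j-1}$ requires knowing $\phi(\mathbf{h}_j-\mathbf{h}_{j+1})=h_j-h_{j+1}$, i.e.\ (\ref{imageofroot3}). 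So the correct ordering is: (\ref{imageofroot2}), then (\ref{imageofroot1}) only for roots involving $\epsilon_{n+1}$, then (\ref{imageofroot3}), and only then (\ref{imageofroot1}) for roots with $j\le n$. Relatedly, the inductive step you write, $\phi(X_\alpha)=[\phi(X_{\epsilon_i-\epsilon_{i+1}}),\phi(X_{\epsilon_{i+1}-\epsilon_j})]$, is not the defining recursion for the unprimed $X$'s; the valid one is $X_{\epsilon_{i-1}-\epsilon_j}=[\mathbf{e}_{i-1},X_{\epsilon_i-\epsilon_j}]=[X'_{\epsilon_{i-1}-\epsilon_i},X_{\epsilon_i-\epsilon_j}]$, i.e.\ one brackets against the \emph{primed} simple vector $\mathbf{e}_{i-1}$, not $X_{\epsilon_{i-1}-\epsilon_i}$.
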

\begin{proof}
We first prove $(\ref{imageofroot2})$ for a root $\alpha=\epsilon_i-\epsilon_j$ with $i<j$ by fixing $j$ and induction on $i$. The base case $i=j-1$ is true by definition:
\begin{align*}
\phi(X'_{\epsilon_{j-1}-\epsilon_j})=\phi(X_{\mu_{j-1}-\mu_j})=\phi(\e_{j-1})=e_{j-1}+e_{\overline{j-1}}=Y_{\epsilon_{j-1}-\epsilon_j}+Z_{\epsilon_{j-1}-\epsilon_j}.
\end{align*}
Suppose the statement is true for $i=k$, then
\begin{align*}
\phi(X_{\epsilon_{k-1}-\epsilon_j})=&\phi(X_{\mu_{k-1}-\mu_j})=\phi([X_{\mu_{k-1}-\mu_k},X_{\mu_k-\mu_j}]) \\
=&[e_{k-1}+e_{\overline{k-1}},Y_{\epsilon_k-\epsilon_j}+Z_{\epsilon_k-\epsilon_j} ]=Y_{\epsilon_{k-1}-\epsilon_j}+Z_{\epsilon_{k-1}-\epsilon_j}.
\end{align*}
The arguments  are similar for a negative root in (\ref{imageofroot2}).

To show $(\ref{imageofroot1})$ for a root $\alpha=\epsilon_i-\epsilon_{n+1}$ for $1\leq i\leq n$, we induct on $i$. The base case when $i=n$ is as follows
\begin{align*}
\phi(X_{\epsilon_n-\epsilon_{n+1}})=\phi(\frac{1}{2}X_{\mu_n-\mu_{n+1}})=\frac{1}{2}\phi(\e_n)=e_n=Y_{\epsilon_n-\epsilon_{n+1}}.
\end{align*}
Suppose the statement is true for $i=k$,
\begin{align*}
\phi(X_{\epsilon_{k-1}-\epsilon_{n+1}})=&\phi(\frac{1}{2}X_{\mu_{k-1}-\mu_{n+1}})=\frac{1}{2}\phi([X_{\mu_{k-1}-\mu_k},X_{\mu_k-\mu_{n+1}}])=\phi([\e_{k-1},X'_{\epsilon_k-\epsilon_{n+1}}])\\
=&[e_{k-1}+e_{\overline{k-1}},Y_{\epsilon_k-\epsilon_{n+1}}]=Y_{\epsilon_{k-1}-\epsilon_{n+1}}.
\end{align*}
The arguments are similar for a negative root in (\ref{imageofroot2}).

We now show that (\ref{imageofroot3}) is true by induction on $i$. The case when $i=n+1$ is given via definition. The next case when $i=n$ is verified as follows:
\begin{align*}
\phi({\mathbf{h}_n})=&\phi(\mathbf{h}_{n+1}+\frac{1}{2}X_{\mu_n-\mu_{n+2}})=\phi(\mathbf{h}_{n+1})+\frac{1}{2}[\phi(X_{\mu_n-\mu_{n+1}}),\phi(X_{\mu_{n+1}-\mu_{n+2}})]\\
=&\phi(\mathbf{h}_{n+1})+\frac{1}{2}[\phi(X_{\epsilon_n-\epsilon_{n+1}}),\phi(X_{-\epsilon_n+\epsilon_{n+1}})]=h_{n+1}+[e_n,f_n]=h_n
\end{align*}

Using $i=n$ as the base case, suppose the statement is true for $i$, then by Lemma~\ref{independentofbrackets},
\begin{align*}
\phi(\mathbf{h}_{i-1})=&\phi(\mathbf{h}_{i})+\phi(\frac{1}{2}X_{\mu_{i-1}-\mu_{2n+3-i}})\\
=&\phi(\mathbf{h}_i)+\frac{1}{2}\phi([X_{\mu_{i-1}-\mu_i},[X_{\mu_i-\mu_{2n+2-i}},X_{\mu_{2n+2-i}-\mu_{2n+3-i}}]])\\
=&h_i+\phi([\mathbf{e}_{i-1},[\mathbf{h}_i-\mathbf{h}_{i+1},X_{-\epsilon_{i-1}+\epsilon_i}]])=h_i+\phi([\mathbf{e}_{i-1},[\mathbf{h}_{i}-\mathbf{h}_{i+1},\mathbf{f}_{i-1}]])\\
=&h_i+[Y_{\epsilon_{i-1}-\epsilon_i}+Z_{\epsilon_{i-1}-\epsilon_i},[h_i-h_{i+1},Y_{\epsilon_{i-1}+\epsilon_i}+Z_{\epsilon_{i-1}+\epsilon_i}]]\\
=&h_i+[Y_{\epsilon_{i-1}-\epsilon_i}+Z_{\epsilon_{i-1}-\epsilon_i},Y_{-\epsilon_{i-1}+\epsilon_i}]=h_i+h_{i-1}-h_i=h_{i-1},
\end{align*}
and therefore the statement is also true for $i-1$. Notice the root $\mu_i-\mu_{2n+2-i}=0$ if and only if $i=n+1$, and the induction starts at $i=n$.

We now show (\ref{imageofroot1}) via fixing $j$ and induction on $i$. The base case for $(i,j)$ when $i=j-1$ (or $j=i+1$) is as follows
\begin{align*}
\phi(X'_{\epsilon_i-\epsilon_{i+1}})=&\phi(\frac{1}{2}X_{\mu_i-\mu_{2n+2-(i+1)}})=\frac{1}{2}\phi([X_{\mu_i-\mu_{i+1}},X_{\mu_{i+1}-\mu_{2n+2-(i+1)}}])\\
=&\frac{1}{2}\phi([\mathbf{e}_i,2(\mathbf{h}_{i+1}-\mathbf{h}_{i+2})])=[Y_{\epsilon_i-\epsilon_{i+1}}+Z_{\epsilon_i-\epsilon_{i+1}},h_{i+1}-h_{i+2}]=Y_{\epsilon_i-\epsilon_{i+1}}.
\end{align*}
Now suppose (\ref{imageofroot1}) holds for the index pair $(i,j)$, then we claim it also holds for $(i-1,j)$:
\begin{align*}
\phi(X_{\epsilon_{i-1}-\epsilon_j})=&\phi(\frac{1}{2}X_{\mu_{i-1}-\mu_{2n+2-j}})=\frac{1}{2}\phi([X_{\mu_{i-1}-\mu_i},X_{\mu_i-\mu_{2n+2-j}}])\\
=&\phi([\mathbf{e}_{i-1},X'_{\epsilon_i-\epsilon_j}])=[Y_{\epsilon_{i-1}-\epsilon_i}+Z_{\epsilon_{i-1}-\epsilon_i},Y_{\epsilon_i-\epsilon_j}]=Y_{\epsilon_{i-1}-\epsilon_j}.
\end{align*}
Therefore we have shown (\ref{imageofroot1}) by induction. The negative roots follow a similar argument.
\end{proof}

Recall the definition of $\mathbf{h}_i$ in (\ref{preimageofh}).

\begin{thm}\label{basisoffixedpoint}
The morphism $\phi$ (resp. $\phi|_{\mathfrak{g}^{\theta}}$)  in (\ref{phi}) is an isomorphism of (resp. Lie) algebras. Moreover, 
the following set is a basis of $\mathfrak{g}^{\theta}$. 
\begin{align*}
\{X_{\alpha}\}_{\alpha\in \Phi_{n+1}}\cup \{X'_{\alpha}\}_{\alpha\in \Phi_n}\cup\{\mathbf{h}_1,\dots,\mathbf{h}_{n+1}\}\cup \{\D_1,\dots,\D_n\},
\end{align*}
 and the elements $\D_1$, \dots, $\D_{n}$, $\h_1$, \dots, $\h_{n+1}$ pairwise commute. 
\end{thm}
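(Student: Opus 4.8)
The strategy is to leverage the previous results (Theorem~\ref{span} and Lemma~\ref{rootvectors}) to simultaneously establish linear independence of the proposed spanning set and bijectivity of $\phi$, by a dimension count. First I would argue that the listed set
\[
\mathcal B=\{X_{\alpha}\}_{\alpha\in \Phi_{n+1}}\cup \{X'_{\alpha}\}_{\alpha\in \Phi_n}\cup\{\mathbf{h}_1,\dots,\mathbf{h}_{n+1}\}\cup \{\D_1,\dots,\D_n\}
\]
spans $\mathfrak{g}^{\theta}$. By Theorem~\ref{span}, $\mathfrak{g}^{\theta}$ is spanned by $\{X_{\mu_i-\mu_j}\}_{1\le i<j\le 2n+1}$ together with $\{\D_1,\dots,\D_{n+1}\}$. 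Using the re-indexing introduced before Lemma~\ref{rootvectors}, every $X_{\mu_i-\mu_j}$ equals (up to a scalar) one of the $X_{\alpha}$ or $X'_{\alpha}$ except for the "diagonal" vectors $X_{\mu_i-\mu_{2n+2-i}}$, which by the recursion (\ref{preimageofh}) are expressible as $2(\mathbf{h}_i-\mathbf{h}_{i+1})$; moreover $\mathbf{d}_{n+1}=2\mathbf{h}_{n+1}$. Hence $\mathcal B$ spans $\mathfrak{g}^{\theta}$.

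Next I would count: $|\Phi_{n+1}|=(n+1)n$, $|\Phi_n|=n(n-1)$, plus $(n+1)$ elements $\mathbf{h}_i$ and $n$ elements $\D_i$, giving $(n+1)n+n(n-1)+(n+1)+n=2n^2+2n+1=(n+1)^2+n^2=\dim(\mathfrak{gl}_{n+1}\oplus\mathfrak{gl}_n)$. Now apply $\phi$ to $\mathcal B$: by Lemma~\ref{rootvectors} the images are $\{Y_{\alpha}\}_{\alpha\in\Phi_{n+1}}$, $\{Y_{\alpha}+Z_{\alpha}\}_{\alpha\in\Phi_n}$, $\{h_1,\dots,h_{n+1}\}$, and $\{\phi(\D_i)\}=\{h_i+h_{\overline i}\}_{1\le i\le n}$. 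This family is linearly independent in $\mathfrak{gl}_{n+1}\oplus\mathfrak{gl}_n$: the $Y_{\alpha}$ and $Z_{\alpha}$ for distinct roots lie in distinct weight spaces and are linearly independent from each other and from the Cartan part, the $Z_{\alpha}$ components of $\{Y_{\alpha}+Z_{\alpha}\}_{\alpha\in\Phi_n}$ force those to be independent, and in the Cartan subalgebra the $(n+1)+n$ vectors $h_1,\dots,h_{n+1},h_1+h_{\overline1},\dots,h_n+h_{\overline n}$ are clearly a basis. So $\phi(\mathcal B)$ is a basis of $\mathfrak{gl}_{n+1}\oplus\mathfrak{gl}_n$, which has the same dimension as the span of $\mathcal B$; therefore $\phi$ restricted to $\mathrm{span}(\mathcal B)=\mathfrak{g}^{\theta}$ is injective, hence an isomorphism of Lie algebras, and consequently $\mathcal B$ itself is linearly independent, i.e.\ a basis of $\mathfrak{g}^{\theta}$. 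Passing to universal enveloping algebras (or using that $\psi$, hence $\phi=\mathbf i\circ\psi$, is a homomorphism of associative algebras already shown to be surjective onto $U$ via PBW, and now injective on generators) upgrades this to the algebra isomorphism $\phi:U^j\to U$.

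Finally, for the commutativity claim: $\D_1,\dots,\D_n$ commute by relation (\ref{d}); the $\mathbf{h}_i$ all lie in the $0$-weight space for the adjoint action of $\D_1,\dots,\D_n,\D_{n+1}'$ by construction (\ref{preimageofh}) and Lemma~\ref{correctweight}, so each $\mathbf{h}_i$ commutes with every $\D_k$; and $[\mathbf{h}_i,\mathbf{h}_j]=0$ follows by transporting through $\phi$ — $\phi([\mathbf{h}_i,\mathbf{h}_j])=[h_i,h_j]=0$ in $\mathfrak{gl}_{n+1}$ and $\phi$ is injective. The main obstacle I anticipate is not any single step but making the dimension bookkeeping airtight: one must be sure Theorem~\ref{span}'s spanning set is genuinely reduced to $\mathcal B$ (in particular that no $X_{\mu_i-\mu_j}$ with $j\neq 2n+2-i$ was missed and that the diagonal ones are fully absorbed into the $\mathbf h_i$'s), so that the inequality $\dim\mathfrak{g}^{\theta}\le|\mathcal B|=\dim(\mathfrak{gl}_{n+1}\oplus\mathfrak{gl}_n)$ holds, which combined with injectivity of $\phi$ on $\mathrm{span}(\mathcal B)$ forces equality throughout.
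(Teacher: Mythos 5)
Your proposal is correct and takes essentially the same approach as the paper: spanning from Theorem~\ref{span}, linear independence from the fact that the images under $\phi$ (Lemma~\ref{rootvectors}) form a linearly independent set, and bijectivity and commutativity then follow. The paper's own proof is just two sentences; you have simply spelled out the dimension count and the passage to universal enveloping algebras that the paper leaves implicit.
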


\begin{proof}
These elements are exactly those mentioned in Theorem~\ref{span}, hence they span $\mathfrak{g}^{\theta}$. By Lemma~\ref{rootvectors}, they are also linearly independent, because their image forms a linearly independent set.
\end{proof}

\subsection{The type B Schur algebra}

Recall the elements $\mathbf{h}_i$, defined in terms of generators of $U^i$ in (\ref{preimageofh}). For $1\leq i\leq n$, let
$
\h_{\overline{i}}=\mathbf{d}_{\overline{i}}-\h_{i} $. By the definition of $\phi$ and Lemma~\ref{rootvectors}, $\phi(\h_{\overline{i}})=h_{\overline{i}}$. Now fix a positive integer $d$. Let $S$ be a quotient of $U(\mathfrak{gl}_{n+1}(\mathbb{C})\oplus\mathfrak{gl}_n(\mathbb{C}))$, by the following relations
\begin{align}
h_1+\cdots+h_{n+1}+h_{\overline{1}}+\cdots+h_{\overline{n}}&=d,\label{hrelations1}\\
h_{i}(h_i-1)\cdots(h_{i}-d)&=0, \hspace{.5 in} 1\leq i\leq n+1,\label{hrelations2}\\
h_{\overline{i}}(h_{\overline{i}}-1)\cdots(h_{\overline{i}}-d)&=0, \hspace{.5 in} 1\leq i\leq n. \label{hrelations3}
\end{align}
These relations are analogous to the type A relations, i.e. relations in a presentation for the type A Schur algebras given by Doty-Giaquinto \cite{DG02}.

On the other hand, let $S^j$ be the quotient of $U^j$ under further relations
\begin{align}
\mathbf{d}_1+\cdots+\mathbf{d}_n+\mathbf{h}_{n+1}=&d, \label{sumofd} \\
\h_i(\h_i-1)\cdots(\h_i-d)=&0, \hspace{.5 in} 1\leq i\leq n+1,\\
\h_{\overline{i}}(\h_{\overline{i}}-1)\cdots(\h_{\overline{i}}-d)=&0, \hspace{.5 in} 1\leq i\leq n. \label{hnobar}
\end{align}
Note: once unraveling the notation of root vectors, these relations are purely in terms of generators $\mathbf{e}_i$, $\mathbf{f}_i$ and $\mathbf{d}_i$ of $U^j$.
\begin{prop}\label{newpresentation}
The algebras $S^j$ and $S$ are isomorphic.
\end{prop}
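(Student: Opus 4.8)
The plan is to show that the isomorphism $\phi: U^j \to U(\mathfrak{gl}_{n+1}\oplus\mathfrak{gl}_n)$ from Theorem~\ref{basisoffixedpoint} descends to the quotients, i.e. that $\phi$ carries the ideal defining $S^j$ onto the ideal defining $S$. Since $\phi$ is an isomorphism, it suffices to check that each defining relation of $S^j$ is mapped to (a scalar multiple of, or a combination of) the defining relations of $S$, and vice versa. This is essentially a bookkeeping exercise once we have the formulas $\phi(\mathbf{h}_i)=h_i$, $\phi(\mathbf{h}_{\overline{i}})=h_{\overline{i}}$ from Lemma~\ref{rootvectors} and the comments preceding Proposition~\ref{newpresentation}, together with $\phi(\mathbf{d}_i)=h_i+h_{\overline{i}}$ and $\phi(\mathbf{d}_{n+1})=2h_{n+1}$, hence $\phi(\mathbf{h}_{n+1})=\phi(\tfrac12\mathbf{d}_{n+1})=h_{n+1}$.

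First I would treat relation (\ref{sumofd}). We have $\phi(\mathbf{d}_1+\cdots+\mathbf{d}_n+\mathbf{h}_{n+1}) = \sum_{i=1}^n(h_i+h_{\overline{i}}) + h_{n+1} = \sum_{i=1}^{n+1}h_i + \sum_{i=1}^n h_{\overline{i}}$, which is exactly the left-hand side of (\ref{hrelations1}). So (\ref{sumofd}) is sent precisely to (\ref{hrelations1}). Next, since $\phi(\mathbf{h}_i)=h_i$ is an algebra homomorphism, $\phi\big(\mathbf{h}_i(\mathbf{h}_i-1)\cdots(\mathbf{h}_i-d)\big) = h_i(h_i-1)\cdots(h_i-d)$, matching (\ref{hrelations2}); likewise $\phi$ sends the relations in $\mathbf{h}_{\overline{i}}$ to (\ref{hrelations3}). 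Thus $\phi$ maps the generating set of the defining ideal $J^j\subseteq U^j$ of $S^j$ into the defining ideal $J\subseteq U(\mathfrak{gl}_{n+1}\oplus\mathfrak{gl}_n)$ of $S$, so $\phi$ descends to a surjective homomorphism $\overline{\phi}: S^j \to S$. Running the same computation with $\phi^{-1}$ — using that $\phi^{-1}(h_i)=\mathbf{h}_i$, $\phi^{-1}(h_{\overline{i}})=\mathbf{h}_{\overline{i}}$, and hence $\phi^{-1}$ sends (\ref{hrelations1})–(\ref{hrelations3}) into $J^j$ — shows $\phi^{-1}$ descends too, and the two descended maps are mutually inverse. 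Therefore $\overline{\phi}$ is an isomorphism.

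A small technical point to be careful about: one must confirm that $J^j$ is exactly the ideal generated by (\ref{sumofd})–(\ref{hnobar}) and $J$ exactly the ideal generated by (\ref{hrelations1})–(\ref{hrelations3}) — this is the definition of $S^j$ and $S$ in the text, so no work is needed — and that the relations (\ref{sumofd})–(\ref{hnobar}), though written using the root-vector shorthand $\mathbf{h}_i$, $\mathbf{h}_{\overline{i}}$, genuinely lie in $U^j$ (they do, since $\mathbf{h}_i$ is defined recursively in (\ref{preimageofh}) as an honest element of $U^j$, and $\mathbf{h}_{\overline{i}}=\mathbf{d}_i-\mathbf{h}_i$). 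I expect no real obstacle here: the only mild subtlety is making sure the correspondence of generators of the two ideals is a genuine bijection of \emph{generating sets} (it is, term by term), so that descending in both directions is legitimate. The proof is then a two-line invocation of Theorem~\ref{basisoffixedpoint} plus the identity $\phi(\mathbf{h}_{\overline{i}})=h_{\overline{i}}$.
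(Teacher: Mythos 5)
Your proof is correct and is precisely the argument the paper intends; in fact the paper states Proposition~\ref{newpresentation} without a written proof, treating it as immediate from the isomorphism $\phi$ of Theorem~\ref{basisoffixedpoint} together with $\phi(\mathbf{h}_i)=h_i$, $\phi(\mathbf{h}_{\overline{i}})=h_{\overline{i}}$, $\phi(\mathbf{h}_{n+1})=h_{n+1}$. Your write-up simply makes explicit the term-by-term matching of the generating relations (\ref{sumofd})--(\ref{hnobar}) with (\ref{hrelations1})--(\ref{hrelations3}) and the two-sided descent, which is exactly what the implicit argument requires.
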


\begin{rem}
{The algebra $S^j$ is isomorphic to the centralizer of the type B Weyl group studied by Green~\cite{Green97}.}
\end{rem}

\begin{rem}
By an unpublished work of Kujawa-Zhu, the hyperoctahedral Schur algebra admits a presentation as a quotient using relations (\ref{hrelations1})-(\ref{hrelations3}). Therefore, the above proposition provides an alternative presentation of the Schur algebra of type B.
\end{rem}

\begin{example}
When $n=1$, $S^j$ is the algebra with generators $\mathbf{e}_1$, $\mathbf{f}_1$, $\mathbf{d}_1$, $\mathbf{d}_2$ subject to the relations in Proposition~\ref{presentation} and the following additional relations.
\begin{align*}
\mathbf{d}_1+\frac{1}{2}\mathbf{d}_2=&d,\\
\mathbf{d}_2(\mathbf{d}_2-2)\cdots (\mathbf{d}_2-2d)=&0,\\
(\mathbf{d}_2+[\mathbf{e}_1,\mathbf{f}_1])(\mathbf{d}_2+[\mathbf{e}_1,\mathbf{f}_1]-2)\cdots(\mathbf{d}_2+[\mathbf{e}_1,\mathbf{f}_1]-2d)=&0,\\
(\mathbf{d}_2+\frac{1}{2}[\mathbf{e}_1,\mathbf{f}_1]-d)(\mathbf{d}_2+\frac{1}{2}[\mathbf{e}_1,\mathbf{f}_1]
-d-1)\cdots (\mathbf{d}_2+\frac{1}{2}[\mathbf{e}_1,\mathbf{f}_1]
-2d)=&0.
\end{align*} 
\end{example}

\section{The algegbra $U^i$ } \label{oddsection}
In the case when $N$ is even, all major results in the previous section have their analogous counterparts in this section: a root space decomposition of the fixed point subalgebra, an isomorphism with the two-block Levi Lie subalgebra of $U(\mathfrak{gl}_N)$, an explicit description of inverse image of such an isomorphism using the notion of root vectors, and an alternative presentation of the type B Schur algebra.  

\subsection{The fixed point subalgebra}
We start by introducing the fixed point subalgebra and its presentation using generators similar to those in Section~\ref{S2.1}. We now discuss the case when the underlying Dynkin diagram has an odd number of vertices. We will abuse some of the previous notation despite this new assumption on the parity. In particular, let $\mathfrak{g}=\mathfrak{gl}_{2n}(\mathbb{C})$ and let $\Gamma$ be the Dynkin diagram of $\mathfrak{gl}_{2n}(\mathbb{C})$. Then $\Gamma$ has vertices $1,2,\dots,2n-1$. Let $\tau\in \operatorname{Aut}(\Gamma)$ be the graph automorphism such that $\tau(i)=2n-i$, then $\tau$ induces a Lie algebra automorphism on $\mathfrak{sl}_{2n}(\mathbb{C})$, which extends to a Lie algebra automorphism $\theta$ on  $\mathfrak{gl}_{2n}(\mathbb{C})$ via
\begin{align*}
\theta(E_i)=F_{\tau(i)},\hspace{.2 in} \theta(F_i)=E_{\tau(i)}, \hspace{.2 in} \theta(H_i)=H_{\tau(i)+1}.
\end{align*}
Let $\mathfrak{g}^{\theta}=\mathfrak{gl}_{2n}(\mathbb{C})^{\theta}$ be the subalgebra of $\mathfrak{g}$ fixed by $\theta$, and $E_i, F_i (1\leq i\leq 2n-1)$, $H_i (1\leq i\leq 2n)$ be the Chevalley generators of $U(\mathfrak{gl}_{2n})$. Define the following elements in $U(\mathfrak{gl}_{2n})$:
\begin{align*}
\mathbf{e}_i&=E_i+F_{2n-i} \hspace{.3 in} (1\leq i\leq n-1),\\ 
\mathbf{f}_i&=F_i+E_{2n-i} \hspace{.3 in} (1\leq i\leq n-1),\\
 \mathbf{t}&=E_n+F_n,\\
 \mathbf{d}_i&=H_i+H_{2n+1-i} \hspace{.3 in} (1\leq i\leq n).
\end{align*} 
Let $\mathfrak{h}\subset \mathfrak{g}$ be the Cartan subalgebra of $\mathfrak{gl}_{2n}(\mathbb{C})$, and $\mathfrak{h}^{\theta}=\{h\in \mathfrak{h}\hspace{.1 in}|\hspace{.1 in} \theta(h)=h\}$ the subalgebra of $\mathfrak{h}$ fixed by $\theta$. Similar to Lemma~\ref{hfixed}, we have the following
\begin{lemma}
The algebra $\mathfrak{h}^{\theta}$ is spanned by $\mathbf{d}_i$ $(1\leq i\leq n)$. 
\end{lemma}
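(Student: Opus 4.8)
The plan is to mimic almost verbatim the proof of Lemma~\ref{hfixed}, the only change being the indexing forced by the new parity. First I would write an arbitrary element of $\mathfrak{h}$ as $h = c_1 H_1 + \cdots + c_{2n} H_{2n}$ with $c_i \in \mathbb{C}$, and compute $\theta(h)$ using $\theta(H_i) = H_{\tau(i)+1} = H_{2n+1-i}$. This gives $\theta(h) = \sum_{i=1}^{2n} c_i H_{2n+1-i} = \sum_{j=1}^{2n} c_{2n+1-j} H_j$, so that the fixed-point condition $h = \theta(h)$ is, by linear independence of the $H_j$, equivalent to $c_j = c_{2n+1-j}$ for all $1 \le j \le 2n$.

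Next I would observe that the index map $j \mapsto 2n+1-j$ is a fixed-point-free involution on $\{1,\dots,2n\}$ pairing $i$ with $2n+1-i$ for $1 \le i \le n$; in contrast with the odd case $N = 2n+1$ there is no self-paired index, so no analogue of $\mathbf{d}_{n+1} = 2H_{n+1}$ appears. Using $c_j = c_{2n+1-j}$, the element $h$ becomes $\sum_{i=1}^{n} c_i (H_i + H_{2n+1-i}) = \sum_{i=1}^n c_i \mathbf{d}_i$, which shows $\mathfrak{h}^{\theta} \subseteq \operatorname{span}\{\mathbf{d}_1,\dots,\mathbf{d}_n\}$. The reverse inclusion is immediate since each $\mathbf{d}_i$ is visibly $\theta$-fixed (it is a sum over a single $\theta$-orbit among the $H$'s), and the $\mathbf{d}_i$ are linearly independent as they involve disjoint sets of matrix units, so in fact they form a basis of $\mathfrak{h}^{\theta}$.

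There is essentially no obstacle here: the statement is a one-line linear-algebra computation, and the only point requiring the slightest care is to record correctly that $\theta(H_i) = H_{2n+1-i}$ rather than $H_{2n-i}$, since $\tau(i) = 2n-i$ but $\theta$ shifts the index by one. This is exactly why the pairing is $(i, 2n+1-i)$ and why all $2n$ indices are consumed in $n$ pairs, with no leftover generator.
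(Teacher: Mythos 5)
Your proof is correct and is precisely the obvious adaptation of the paper's Lemma~\ref{hfixed} to the even case, which is what the paper itself intends when it says ``Similar to Lemma~\ref{hfixed}.'' The one point of care you flag --- that $\theta(H_i)=H_{2n+1-i}$ (not $H_{2n-i}$) because $\theta$ shifts $\tau(i)$ by one, so the involution $j\mapsto 2n+1-j$ is fixed-point-free and there is no leftover $\mathbf{d}_{n+1}$-type generator --- is exactly right.
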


By Lemma~\ref{Kolb}, the elements $\mathbf{e}_i$, $\mathbf{f}_i$, $\mathbf{t}$, $\mathbf{d}_i$ generate $\mathfrak{g}^{\theta}$. Let $U(\mathfrak{g}^{\theta})$ be the universal enveloping algebra of $\mathfrak{g}^{\theta}$.

\begin{prop}\label{relations2}\label{presentation2}
The algebra $U(\mathfrak{g}^{\theta})$ is generated by $\mathbf{e}_1,\dots,\mathbf{e}_{n-1}$, $\mathbf{f}_1,\dots,\mathbf{f}_{n-1}$, $\mathbf{t}$, $\mathbf{d}_1,\dots,\mathbf{d}_n$, subject to the following relations
\begin{align*}
\mathbf{t}\mathbf{d}_i&=\mathbf{d}_i \mathbf{t} \hspace{.5 in}(1\leq i\leq n),\tag{R1}\\
\mathbf{d}_i\mathbf{e}_j-\mathbf{e}_j\mathbf{d}_i&=(\delta_{i,j}-\delta_{i,j+1})\mathbf{e}_j,\\
\mathbf{d}_i\mathbf{f}_j-\mathbf{f}_j\mathbf{d}_i&=(-\delta_{i,j}+\delta_{i,j+1})\mathbf{f}_j,\\
\mathbf{t}\mathbf{e}_i & =\mathbf{e}_i \mathbf{t},\tag{R2}\\
\mathbf{t} \mathbf{f}_i & =\mathbf{f}_i \mathbf{t}\hspace{.5 in}  (1\leq i \leq n-2) ,\\
\mathbf{e}_i\mathbf{e}_j&=\mathbf{e}_j\mathbf{e}_i \hspace{.5 in} |i-j|>1,\\
\mathbf{f}_i\mathbf{f}_j&=\mathbf{f}_j\mathbf{f}_i \hspace{.5 in} |i-j|>1,\\
\mathbf{e}_i\mathbf{f}_j-\mathbf{f}_j\mathbf{e}_i&=\delta_{i,j}\mathbf{d}_i-\mathbf{d}_{i+1},\\
\mathbf{e}_{n-1}^2\mathbf{t}-2\mathbf{e}_{n-1}\mathbf{t}\mathbf{e}_{n-1}+\mathbf{t}\mathbf{e}_{n-1}^2 &=0 ,\tag{R3}\\
\mathbf{f}_{n-1}^2\mathbf{t}-2\mathbf{f}_{n-1}\mathbf{t}\mathbf{f}_{n-1}+\mathbf{t}\mathbf{f}_{n-1}^2 &=0,\\
\mathbf{e}_{n-1}\mathbf{t}^2-2\mathbf{t}\mathbf{e}_{n-1}\mathbf{t}+\mathbf{t}^2\mathbf{e}_{n-1}&=\mathbf{e}_{n-1}, \tag{R4}\\
\mathbf{f}_{n-1}\mathbf{t}^2-2\mathbf{t}\mathbf{f}_{n-1}\mathbf{t}+\mathbf{t}^2\mathbf{f}_{n-1}&=\mathbf{f}_{n-1}.
\end{align*} 
\end{prop}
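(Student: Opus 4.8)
The plan is to adapt verbatim the argument used for Proposition~\ref{presentation} (the odd case), since the even case differs only in the shape of the ``exceptional'' relations (R3)--(R4) attached to the middle node $\mathbf{t}$. First I would verify that the listed relations (R1)--(R4) all hold in $U(\mathfrak{g}^\theta)$ by direct computation inside $U(\mathfrak{gl}_{2n})$: the linear relations (R1) and the first four lines of (R2) and the $\mathbf{e}\mathbf{f}$-commutator are immediate from $[H_a, E_{b,c}]=(\delta_{ab}-\delta_{ac})E_{b,c}$ and the definitions; the Serre-type relations among the $\mathbf{e}_i,\mathbf{f}_i$ ($|i-j|>1$) follow since the supports of $\mathbf{e}_i$ and $\mathbf{e}_j$ in $\mathfrak{gl}_{2n}$ are disjoint; and (R3)--(R4) are the genuinely new computations, which amount to checking $[\mathbf{e}_{n-1},[\mathbf{e}_{n-1},\mathbf{t}]]=0$ and $[\mathbf{t},[\mathbf{t},\mathbf{e}_{n-1}]]=\mathbf{e}_{n-1}$ (and their $\mathbf{f}$-analogues). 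These reduce to bracket identities among the four matrix units $E_{n-1,n}, E_{n+1,n-1}, E_{n,n+1}, E_{n,n-1}$ together with $E_{n,n}, E_{n+1,n+1}$, a finite check.

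Next I would set up the same filtration/PBW argument. Let $B$ be the algebra on the given generators modulo only the ``Cartan'' relations (R1) and the two $[\mathbf{d},\mathbf{e}]$, $[\mathbf{d},\mathbf{f}]$ lines; then $B$ has a spanning set $\{w\,\mathbf{d}^{\mathbf{s}}\}$ with $w$ a word in $\mathbf{e}_1,\dots,\mathbf{e}_{n-1},\mathbf{f}_1,\dots,\mathbf{f}_{n-1},\mathbf{t}$ and $\mathbf{s}\in\mathbb{Z}_{\ge0}^n$. I would put a filtration on $B$ by $\deg\mathbf{e}_i=\deg\mathbf{f}_i=\deg\mathbf{t}=1$, $\deg\mathbf{d}_i=0$, and on $U(\mathfrak{gl}_{2n})$ by $\deg E_i=1$, $\deg E_n=1$ (here it is cleanest to assign degree $1$ to the part of $\mathbf{t}$ lying above the diagonal and degree $0$ to the part below, mirroring $\deg F_i=0$ in the odd case), $\deg F_i=0$, $\deg H_i=0$; the surjection $B\twoheadrightarrow U(\mathfrak{g}^\theta)$ is filtered, and on any top-degree component a word $w$ maps to the corresponding ordered product of $E$'s (with $\mathbf{t}\mapsto E_{n,n+1}$) times $\phi(\mathbf{d}^{\mathbf{s}})$, plus strictly lower terms.

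Then I would run the contradiction argument: take $x=\sum a_{w,\mathbf{s}} w\,\mathbf{d}^{\mathbf{s}}$ of minimal degree in $\ker(B\to U(\mathfrak{g}^\theta))$ but not in the ideal $I$ generated by (R2)--(R4); compare top-degree terms, use that $\phi(\mathbf{d}^{\mathbf{s}_1})=\phi(\mathbf{d}^{\mathbf{s}_2})\iff\mathbf{s}_1=\mathbf{s}_2$ and the PBW basis of $U(\mathfrak{gl}_{2n})$ to conclude that for each fixed $\mathbf{s}$ the sum $\sum_{w}a_{w,\mathbf{s}}E^{w}$ vanishes in $U(\mathfrak{gl}_{2n})$, hence lies in the ideal generated by the Serre relations of the positive part $\mathfrak{n}^+$ of $\mathfrak{gl}_{2n}$ (with the middle node $E_n$ playing its usual role). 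The key point is that each such Serre relation among the $E$'s has a preimage relation in our list: the relations $E_iE_j=E_jE_i$ ($|i-j|>1$) come from (R2) / the $\mathbf{e}$-commuting relations; the Serre relation at the pair $(n-1,n)$ on the side $[E_{n-1},[E_{n-1},E_n]]=0$ comes from (R3) (the first line), while the Serre relation $[E_n,[E_n,E_{n-1}]]=0$ — which is \emph{not} homogeneous for our split degree on $\mathbf{t}$ — is exactly reflected by the inhomogeneous relation (R4), whose top-degree part is that Serre relation and whose lower-order correction is $\mathbf{e}_{n-1}$. Using these one rewrites $\sum_w a_{w,\mathbf{s}}w$ into a sum of strictly smaller degree modulo $I$, producing $x'\in\ker\setminus I$ of smaller degree, a contradiction.

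The main obstacle is the middle-node bookkeeping: since $\mathbf{t}=E_n+F_n$ is a sum of an up-part and a down-part, one must choose the degree conventions so that (i) the filtered map to $U(\mathfrak{gl}_{2n})$ still has the clean top-degree behavior used in the odd-case proof, and (ii) every Serre relation among the $E_i$ (including the two at the central edge, which behave asymmetrically under this choice) is accounted for by one of (R3)--(R4), with the lower-degree discrepancies being expressible back in $B$ of strictly smaller degree. Verifying that the inhomogeneous relations (R4) indeed capture precisely the ``missing'' Serre relation modulo lower terms is where the real care is needed; everything else is a routine transcription of the odd-case argument, and I would simply refer to the proof of Proposition~\ref{presentation} for the shared steps.
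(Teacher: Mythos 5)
Your proposal is correct and takes essentially the same route as the paper: the paper verifies (R1)--(R4) by direct computation in $U(\mathfrak{gl}_{2n})$ and then closes with a one-sentence appeal to the filtration/PBW argument of Proposition~\ref{presentation}. You rightly flesh out that appeal, noting that the top-degree part of $\mathbf{t}$ under the convention $\deg E_n=1$, $\deg F_n=0$ is $E_n$, so that (R3) and the first line of (R4) have top-degree parts precisely the two Serre relations at the pair $(E_{n-1},E_n)$ (and symmetrically for the $\mathbf{f}_{n-1}$ lines at $(E_{n+1},E_n)$), with (R4)'s right-hand side living strictly lower in the filtration; this is exactly the detail the paper leaves implicit when it cites the odd case, and your argument supplies it correctly.
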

\begin{proof}
R1): $[\mathbf{t},\mathbf{d}_i]=[E_n+F_n, H_i+H_{2n+1-i}]$ is only nonzero when $i=n$, in which case
\begin{align*}
[\mathbf{t},\mathbf{d}_{n-1}]&=[E_n+F_n,H_{n}+H_{n+1}]\\
&=[E_n,H_{n}]+[E_n,H_{n+1}]+[F_n,H_{n}]+[F_n,H_{n+1}]\\
&=E_n-E_n-F_n+F_n=0.
\end{align*}
Also, for $1\leq i \leq n$ and $1\leq j\leq n-1$, $2\leq i+j\leq 2n-1$ and
\begin{align*}
\mathbf{d}_i\mathbf{e}_j-\mathbf{e}_j\mathbf{d}_i=&[H_i+H_{2n+1-i},E_j+F_{2n-j}]\\
=&(\delta_{i,j}-\delta_{i,j+1}+\delta_{2n+1-i,j}-\delta_{2n+1-i,j+1})E_j\\
&+(-\delta_{i,2n-j}+\delta_{i,2n-j+1}-\delta_{2n+1-i,2n-j}+\delta_{2n+1-i,2n-j+1})F_{2n-j}\\
=&(\delta_{i,j}-\delta_{i,j+1})(E_j+F_{2n-j})=(\delta_{i,j}-\delta_{i,j+1})\mathbf{e}_j.
\end{align*}
Similarly,
\begin{align*}
\mathbf{d}_i\mathbf{f}_j-\mathbf{f}_j\mathbf{d}_i=&[H_i+H_{2n+1-i},F_j+E_{2n-j}]\\
=&(-\delta_{i,j}+\delta_{i,j+1}-\delta_{2n+1-i,j}+\delta_{2n+1-i,j+1})F_j\\
&+(\delta_{i,2n-j}-\delta_{i,2n-j+1}+\delta_{2n+1-i,2n-j}-\delta_{2n+1-i,2n-j+1})E_{2n-j}\\
=&(-\delta_{i,j}+\delta_{i,j+1})(F_j+E_{2n-j})=(-\delta_{i,j}+\delta_{i,j+1})\mathbf{f}_j.
\end{align*}
R2):
The only nontrivial relation to check is $\mathbf{e}_i\mathbf{f}_j-\mathbf{f}_j\mathbf{e}_i=\delta_{i,j}\mathbf{d}_i-\mathbf{d}_{i+1}$, and the calculation is as follows:
\begin{align*}
\mathbf{e}_i\mathbf{f}_j-\mathbf{f}_j\mathbf{e}_i=&[E_i+F_{2n-i},F_i+E_{2n-i}]\\
=&[E_i,F_i]-[E_{2n-i},F_{2n-i}]\\
=&H_{i}-H_{i+1}-H_{2n-i}+H_{2n-i+1}=\mathbf{d}_i-\mathbf{d}_{i+1}.
\end{align*}

R3):
\begin{align*}
&\mathbf{e}_{n-1}^2\mathbf{t}-2\mathbf{e}_{n-1}\mathbf{t}\mathbf{e}_{n-1}+\mathbf{t}\mathbf{e}_{n-1}^2 \\
&=(E_{n-1}+F_{n+1})^2(E_n+F_n)-2(E_{n-1}+F_{n+1})(E_n+F_n)(E_{n-1}+F_{n+1})\\
&-(E_n+F_n)(E_{n-1}+F_{n+1})^2,
\end{align*}
where
\begin{align*}
&(E_{n-1}+F_{n+1})^2E_n-2(E_{n-1}+F_{n+1})E_n(E_{n-1}+F_{n+1})+E_n(E_{n-1}+F_{n+1})^2\\
&=(E_{n-1}^2+2E_{n-1}F_{n+1}+F_{n+1}^2)E_n\\
&-2(E_{n-1}E_n E_{n-1}+F_{n+1}E_nE_{n-1}+E_{n-1}E_nF_{n+1}+F_{n+1}E_nF_{n+1})\\
&+E_n(E_{n-1}^2+2E_{n-1}F_{n+1}+F_{n+1}^2)\\
&=(E_{n-1}^2E_n-2E_{n-1}E_n E_{n-1}+E_nE_{n-1}^2)\\
&+(2E_{n-1}F_{n+1}E_n-2F_{n+1}E_nE_{n-1}-2E_{n-1}E_nF_{n+1}+2E_nE_{n-1}F_{n+1})\\
&+(F_{n+1}^2E_n-2F_{n+1}E_nF_{n+1}+E_nF_{n+1}^2)=0.
\end{align*}
Similarly,
\begin{align*}
(E_{n-1}+F_{n+1})^2F_n-2(E_{n-1}+F_{n+1})F_n(E_{n-1}+F_{n+1})+F_n(E_{n-1}+F_{n+1})^2=0.
\end{align*}
and therefore the relation holds. The other relation can be checked in a similar way.

R4): 
\begin{align*}
&\mathbf{e}_{n-1}\mathbf{t}^2-2\mathbf{t}\mathbf{e}_{n-1}\mathbf{t}+\mathbf{t}^2\mathbf{e}_{n-1}\\
&=(E_{n-1}+F_{n+1})(E_n+F_n)^2-2(E_n+F_n)(E_{n-1}+F_{n+1})(E_n+F_n)\\
&+(E_n+F_n)^2(E_{n-1}+F_{n+1}),
\end{align*}
where 
\begin{align*}
&E_{n-1}(E_n+F_n)^2-2(E_n+F_n)E_{n-1}(E_n+F_n)+(E_n+F_n)^2E_{n-1}\\
&=E_{n-1}(E_n^2+E_nF_n+F_nE_n+F_n^2)\\
&-2(E_nE_{n-1}E_n+E_nE_{n-1}F_n+F_nE_{n-1}E_n+F_nE_{n-1}F_n)\\
&+(E_n^2+E_nF_n+F_nE_n+F_n^2)E_{n-1}\\
&=(E_{n-1}E_n^2-2E_nE_{n-1}E_n+E_n^2E_{n-1})\\
&+(E_{n-1}F_n^2-2F_nE_{n-1}F_n+F_n^2E_{n-1})\\
&+E_{n-1}(E_nF_n+F_nE_n)-2(E_nE_{n-1}F_n+F_nE_{n-1}F_n)+(E_nF_n+F_nE_n)E_{n-1}\\
&=E_{n-1}(2F_nE_n+H_n-H_{n+1})-2(E_nF_nE_{n-1}+E_{n-1}F_nF_n)\\
&+(2E_nF_n-(H_n-H_{n+1}))E_{n-1}\\
&=[E_{n-1},H_n]=E_{n-1}.
\end{align*}
On the other hand,
\begin{align*}
&F_{n+1}(E_n+F_n)^2-2(E_n+F_n)F_{n+1}(E_n+F_n)+(E_n+F_n)^2F_{n+1}\\
&=F_{n+1}(E_n^2+E_nF_n+F_nE_n+F_n^2)\\
&-2(E_nF_{n+1}E_n+E_nF_{n+1}F_n+F_nF_{n+1}E_n+F_nF_{n+1}F_n)\\
&+(E_n^2+E_nF_n+F_nE_n+F_n^2)F_{n+1}\\
&=(F_{n+1}F_n^2-2F_nF_{n+1}F_n+F_n^2F_{n+1})\\
&+(F_{n+1}E_n^2-2E_nF_{n+1}E_n+E_n^2F_{n+1})\\
&+F_{n+1}(E_nF_n+F_nE_n)-2(E_nF_{n+1}F_n+F_nF_{n+1}E_n)+(E_nF_n+F_nE_n)F_{n+1}\\
&=F_{n+1}(2E_nF_n-(H_n-H_{n+1}))-2(F_{n+1}E_nF_n+F_nE_nF_{n+1})\\
&+(2F_nE_n+H_n-H_{n+1})F_{n+1}\\
&=[F_{n+1},H_{n+1}]=F_{n+1}.
\end{align*}
Therefore ,
\begin{align*}
\mathbf{e}_{n-1}\mathbf{t}^2-2\mathbf{t}\mathbf{e}_{n-1}\mathbf{t}+\mathbf{t}^2\mathbf{e}_{n-1}=E_{n-1}+F_{n+1}=\mathbf{e}_{n-1}.
\end{align*}
The other relation can be checked similarly.

The fact that these are the only relations follows from the proof of Proposition~\ref{presentation}.
\end{proof}

\begin{rem}
The defining relations in Proposition~\ref{presentation2} can be deduced from those in Proposition~\ref{presentation} by exploring the compatibility of a certain imbedding 
$\mathfrak{gl}_{N} \to \mathfrak{gl}_{N+1}$ and the involutions on both sides. 
\end{rem}

\subsection{An algebra homomorphism between $U^i$ and $U(\mathfrak{gl}_n\oplus \mathfrak{gl}_n)$}\label{S3.2}

Denote $U^i=U(\mathfrak{g}^{\theta})$ based on the parity assumption in this section. On the other hand, let $e_1,\dots,e_{n-1}$, $f_1,\dots,f_{n-1}$, $h_1,\dots,h_n$ be the Chevalley generators of the first copy of $\mathfrak{gl}_n(\mathbb{C})$ in $\mathfrak{gl}_n(\mathbb{C})\oplus \mathfrak{gl}_n(\mathbb{C})$, and $e_{\overline{1}},\dots,e_{\overline{n-1}}$, $f_{\overline{1}},\dots,f_{\overline{n-1}}$, $h_{\overline{1}},\dots,h_{\overline{n}}$ the Chevalley generators of the second copy.
\begin{lemma}
There is an algebra homomorphism: $\rho: U^i \to U(\mathfrak{gl}_n \oplus \mathfrak{gl}_n)$ via the following assignment
\begin{align*}
\mathbf{e}_i &\mapsto e_i+e_{\overline{i}} \hspace{.2 in}(1\leq i\leq n-1),\\
\mathbf{f}_i &\mapsto f_i+f_{\overline{i}}\hspace{.2 in} (1\leq i\leq n-1),\\
\mathbf{d}_i &\mapsto h_i+h_{\overline{i}} \hspace{.2 in} (1\leq i\leq n),\\
\mathbf{t} & \mapsto h_n-h_{\overline{n}}.
\end{align*}
\end{lemma}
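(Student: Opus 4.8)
The plan is to verify directly that $\rho$ preserves each of the defining relations (R1)--(R4) from Proposition~\ref{presentation2}, exactly as in the proof of Lemma~\ref{phi} for the odd case. Since the images $e_i + e_{\overline{i}}$, $f_i + f_{\overline{i}}$, $h_i + h_{\overline{i}}$ lie in complementary summands $\mathfrak{gl}_n(\mathbb{C}) \oplus \mathfrak{gl}_n(\mathbb{C})$ whose elements commute across the two copies, every bracket computation splits as a sum of two independent computations, each of which is just a standard $\mathfrak{gl}_n$ Chevalley relation. This makes the first-row relations (R1)--(R2), namely the Cartan commutativity $\mathbf{t}\mathbf{d}_i = \mathbf{d}_i\mathbf{t}$, the weight relations $[\mathbf{d}_i,\mathbf{e}_j] = (\delta_{i,j} - \delta_{i,j+1})\mathbf{e}_j$ and its $\mathbf{f}$-analogue, the commuting of $\mathbf{t}$ with $\mathbf{e}_i, \mathbf{f}_i$ for $i \le n-2$, the far-apart commuting of $\mathbf{e}_i,\mathbf{e}_j$ and $\mathbf{f}_i,\mathbf{f}_j$, and the $[\mathbf{e}_i,\mathbf{f}_j] = \delta_{i,j}(\mathbf{d}_i - \mathbf{d}_{i+1})$ relation, all routine: each reduces termwise to the corresponding identity in one copy of $U(\mathfrak{gl}_n)$ and vanishes or matches on the other.

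The substantive checks are the relations involving $\mathbf{t}$ together with $\mathbf{e}_{n-1}$ (and $\mathbf{f}_{n-1}$), that is (R3) and (R4), because here $\mathbf{t} \mapsto h_n - h_{\overline{n}}$ has a nonzero bracket with $e_{n-1} + e_{\overline{n-1}}$ in \emph{both} copies. First I would record the basic brackets $[h_n, e_{n-1}] = e_{n-1}$ (since $e_{n-1}$ is the simple root vector $\alpha_{n-1} = \epsilon_{n-1} - \epsilon_n$ in the first $\mathfrak{gl}_n$), and likewise $[h_{\overline{n}}, e_{\overline{n-1}}] = e_{\overline{n-1}}$, so that $[\rho(\mathbf{t}), \rho(\mathbf{e}_{n-1})] = [h_n - h_{\overline{n}},\, e_{n-1} + e_{\overline{n-1}}] = e_{n-1} - e_{\overline{n-1}}$. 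Then $[\rho(\mathbf{t}),[\rho(\mathbf{t}),\rho(\mathbf{e}_{n-1})]] = [h_n - h_{\overline{n}},\, e_{n-1} - e_{\overline{n-1}}] = e_{n-1} + e_{\overline{n-1}} = \rho(\mathbf{e}_{n-1})$, which is precisely (R4); the two sign changes conspire to give $+1$. For (R3) one computes $[\rho(\mathbf{e}_{n-1}),[\rho(\mathbf{e}_{n-1}),\rho(\mathbf{t})]]$: since $[e_{n-1}, h_n] = -e_{n-1}$ and $[e_{n-1}, e_{n-1}] = 0$ (and similarly in the barred copy), this double bracket vanishes, giving (R3). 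The $\mathbf{f}_{n-1}$ versions are identical with the obvious sign adjustments, using $[h_n, f_{n-1}] = -f_{n-1}$.

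Finally, for the claim that these are \emph{all} the relations — i.e.\ that $\rho$ is well-defined from the abstract algebra $U(\mathfrak{g}^\theta)$ with the presentation of Proposition~\ref{presentation2} — nothing further is needed: Proposition~\ref{presentation2} already asserts that presentation, so well-definedness of $\rho$ follows the moment every listed relation is checked to hold among the images. The only mild obstacle is bookkeeping: confirming that $\mathbf{t}$ indeed commutes with $\mathbf{e}_i$ for $i \le n-2$ (here $[h_n - h_{\overline{n}}, e_i + e_{\overline{i}}] = 0$ because $\epsilon_n$ does not appear in the root $\epsilon_i - \epsilon_{i+1}$ when $i \le n-2$), and being careful with the index ranges in $\mathbf{d}_i\mathbf{e}_j$ and $[\mathbf{e}_i,\mathbf{f}_j]$ so that the Kronecker deltas on the two copies line up. I expect no genuine difficulty here; the lemma is the even-case analogue of Lemma~\ref{phi} and the proof is a direct verification.
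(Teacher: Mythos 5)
Your approach is the same as the paper's: verify relations (R1)--(R4) directly on images, using that the two $\mathfrak{gl}_n$ summands commute so most relations split. The reformulation of the Serre-type relations as iterated brackets $[\mathbf{t},[\mathbf{t},\mathbf{e}_{n-1}]]$ and $[\mathbf{e}_{n-1},[\mathbf{e}_{n-1},\mathbf{t}]]$ is a bit tidier than the paper's explicit expansion in matrix units, and the observation that (R4) works out because the two sign flips in $h_n - h_{\overline{n}}$ combine is exactly the right point.

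One sign in your intermediate step is off. With the paper's convention $h_n = H_n = E_{n,n}$, $e_{n-1} = E_{n-1,n}$, one has $[h_n, e_{n-1}] = -e_{n-1}$, not $+e_{n-1}$: the coefficient is $\alpha_{n-1}(h_n) = (\epsilon_{n-1}-\epsilon_n)(E_{n,n}) = -1$. So $[\rho(\mathbf{t}),\rho(\mathbf{e}_{n-1})] = -e_{n-1} + e_{\overline{n-1}}$, the negative of what you wrote. This does not affect your conclusions — the double bracket in (R4) is quadratic in the sign and still gives $e_{n-1} + e_{\overline{n-1}}$, and (R3) vanishes regardless — but the intermediate bracket should be corrected in a final write-up.
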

\begin{proof}
We check that all the relations are satisfied in $U(\mathfrak{gl}_n \oplus \mathfrak{gl}_n)$ after applying $\rho$.

(R1) The first relation becomes $[h_i+h_{\overline{i}},h_n+h_{\overline{n}}]=0$ which is true. The other two relations are straightforward to check.

(R2) becomes $[e_i+e_{\overline{i}}, h_n-h_{\overline{n}}]=0$ for $1\leq i\leq n-2$ which is true. The other relation in (R2) are straightforward to check.

For (R3), we have
\begin{align*}
&\rho(\mathbf{e}_{n-1}^2\mathbf{t}-2\mathbf{e}_{n-1}\mathbf{t}\mathbf{e}_{n-1}+\mathbf{t}\mathbf{e}_{n-1}^2)\\
&=(e_{n-1}+e_{\overline{n-1}})^2(h_{n}-h_{\overline{n}})-2(e_{n-1}+e_{\overline{n-1}})(h_{n}-h_{\overline{n}})(e_{n-1}+e_{\overline{n-1}})+(h_{n}-h_{\overline{n}})(e_{n-1}+e_{\overline{n-1}})^2.
\end{align*}
Since generators of the first block commute with those of the second block, it yields
\begin{align*}
&(e_{n-1}+e_{\overline{n-1}})^2h_{n}-2(e_{n-1}+e_{\overline{n-1}})h_{n}(e_{n-1}+e_{\overline{n-1}})+h_{n}(e_{n-1}+e_{\overline{n-1}})^2\\
&=(e_{n-1}^2+2e_{n-1}e_{\overline{n-1}}+e_{\overline{n-1}}^2)h_{n}\\
&-2(e_{n-1}h_ne_{n-1}+e_{\overline{n-1}}h_{n}e_{n-1}+e_{n-1}h_ne_{\overline{n-1}}+e_{\overline{n-1}}h_ne_{\overline{n-1}})\\
&+h_n(e_{n-1}^2+2e_{n-1}e_{\overline{n-1}}+e_{\overline{n-1}}^2)\\
&=e_{n-1}^2h_n-2e_{n-1}h_ne_{n-1}+h_ne_{n-1}^2\\
&=e_{n-1}(h_ne_{n-1}-[h_n,e_{n-1}])-2e_{n-1}h_ne_{n-1}+(e_{n-1}h_n+[h_n,e_{n-1}])e_{n-1}=0.
\end{align*}
By symmetry, 
\begin{align*}
(e_{n-1}+e_{\overline{n-1}})^2h_{\overline{n}}-2(e_{n-1}+e_{\overline{n-1}})h_{\overline{n}}(e_{n-1}+e_{\overline{n-1}})+h_{\overline{n}}(e_{n-1}+e_{\overline{n-1}})^2=0.
\end{align*}
and so (R3) is satisfied. The other relation in (R3) can be checked similarly.

For (R4), we have
\begin{align*}
&\rho(\mathbf{e}_{n-1}\mathbf{t}^2-2\mathbf{t}\mathbf{e}_{n-1}\mathbf{t}+\mathbf{t}^2\mathbf{e}_{n-1})\\
&=(e_{n-1}+e_{\overline{n-1}})(h_{n}-h_{\overline{n}})^2-2(h_{n}-h_{\overline{n}})(e_{n-1}+e_{\overline{n-1}})(h_{n}-h_{\overline{n}})+(h_{n}-h_{\overline{n}})^2(e_{n-1}+e_{\overline{n-1}}).
\end{align*}
Since generators of different blocks commute, it becomes
\begin{align*}
&e_{n-1}(h_{n}-h_{\overline{n}})^2-2(h_{n}-h_{\overline{n}})e_{n-1}(h_{n}-h_{\overline{n}})+(h_{n}-h_{\overline{n}})^2e_{n-1}\\
&=e_{n-1}(h_{n}^2-2h_{n}h_{\overline{n}}+h_{\overline{n}}^2)\\
&-2(h_{n}e_{n-1}h_{n}-h_{\overline{n}}e_{n-1}h_n-h_ne_{n-1}h_{\overline{n}}+h_{\overline{n}}e_{n-1}h_{\overline{n}})\\
&+(h_{n}^2-2h_{n}h_{\overline{n}}+h_{\overline{n}}^2)e_{n-1}\\
&=e_{n-1}h_{n}^2-2h_{n}e_{n-1}h_{n}+h_{n}^2e_{n-1}\\
&=(h_ne_{n-1}+e_{n-1})h_n-2h_{n}e_{n-1}h_{n}+h_n(e_{n-1}h_n-e_{n-1})\\
&=[e_{n-1},h_n]=e_{n-1}.
\end{align*}
On the other hand, there is
\begin{align*}
&e_{\overline{n-1}}(h_{n}-h_{\overline{n}})^2-2(h_{n}-h_{\overline{n}})e_{\overline{n-1}}(h_{n}-h_{\overline{n}})+(h_{n}-h_{\overline{n}})^2e_{\overline{n}-1}\\
&=e_{\overline{n-1}}(h_{n}^2-2h_{n}h_{\overline{n}}+h_{\overline{n}}^2)\\
&-2(h_{n}e_{\overline{n-1}}h_{n}-h_{\overline{n}}e_{\overline{n-1}}h_n-h_ne_{\overline{n-1}}h_{\overline{n}}+h_{\overline{n}}e_{\overline{n-1}}h_{\overline{n}})\\
&+(h_{n}^2-2h_{n}h_{\overline{n}}+h_{\overline{n}}^2)e_{\overline{n-1}}\\
&=e_{\overline{n-1}}h_{\overline{n}}^2-2h_{\overline{n}}e_{\overline{n-1}}h_{\overline{n}}+h_{\overline{n}}^2e_{\overline{n-1}}\\
&=(h_{\overline{n}}e_{\overline{n-1}}+e_{\overline{n-1}})h_{\overline{n}}-2h_{\overline{n}}e_{\overline{n-1}}h_{\overline{n}}+h_{\overline{n}}(e_{\overline{n-1}}h_{\overline{n}}-e_{\overline{n-1}})\\
&=[e_{\overline{n-1}},h_{\overline{n}}]=e_{\overline{n-1}}.
\end{align*}
Therefore, we must have
\begin{align*}
&\rho(\mathbf{e}_{n-1}\mathbf{t}^2-2\mathbf{t}\mathbf{e}_{n-1}\mathbf{t}+\mathbf{t}^2\mathbf{e}_{n-1})=e_{n -1}+e_{\overline{n-1}}=\rho(\mathbf{e}_{n-1}).
\end{align*}
The other relation in R4) can be checked similarly. Lemma is proved.
\end{proof}

\subsection{Compatibility of two actions}
In this section we will see that the homomorphism $\rho$ defined in Section~\ref{S3.2} is compatible with the actions of $U^i$ and $U(\mathfrak{gl}_n\oplus \mathfrak{gl}_n)$ on $V^{\otimes}$. In particular, there is a vector space automorphism on $V^{\otimes d}$ which intertwines these two actions. We will also show that this linear map intertwines two (ostensibly different) Weyl group actions, which are known to be Schur-Weyl dual of $U^i$ and $U(\mathfrak{gl}_n\oplus \mathfrak{gl}_n)$, respectively. Let $V$ be the vector space over $\mathbb{C}$ with orderd basis $\mathcal{B}_1=\{v_1,\dots,v_n,v_{\overline{n}},\dots,v_{\overline{1}}\}$. Since $V$ is $2n$-dimensional, it is isomorphic to the $\mathbb{C}$-vector space with ordered basis $\mathcal{B}_2=\{w_1,\dots,w_n,w_{\overline{1}},\dots,w_{\overline{n}}\}$. Let $L:V\to V$ be the linear map such that $L(v_i)=w_i+w_{\overline{i}}$, $L(v_{\overline{i}})=w_i-w_{\overline{i}}$ for $1\leq i\leq n$. Notice $L$ is an isomorphism if and only if the characteristic of the underlying field is odd. Let $L^{\otimes d}:V^{\otimes d}\to V^{\otimes d}$ be the linear map such that
\begin{align*}
L^{\otimes d}(u_1\otimes \cdots\otimes u_d)=L(u_1)\otimes \cdots \otimes L(u_d) \hspace{.3 in} u_i\in V, \forall i
\end{align*}
We first show that $L^{\otimes d}$ intertwines two Weyl group actions. The type B Weyl group $B_d$ is the group with generators $s_1,\dots, s_{d}$, subject to relations
\begin{align*}
&s_i^2=1 \hspace{.2 in} (1\leq i,j\leq d) \hspace{.5 in}
s_is_j=s_js_i  \hspace{.2 in}  (|i-j|>1),\\
&s_is_js_i=s_js_is_j  \hspace{.2 in}(|i-j|=1, 1\leq i,j\leq d-1),\\
&s_{d-1}s_ds_{d-1}s_d=s_ds_{d-1}s_ds_{d-1}.
\end{align*}
The first action of $B_d$ on $V^{\otimes d}$, denoted by $f_1:\mathbb{C}B_d\to \operatorname{End}(V^{\otimes d})$, was first given by Green \cite{Green97} and defined as follows:
\begin{align*}
f_1(s_i) ( v_{k_1} \otimes  \cdots \otimes v_{k_i} \otimes v_{k_{i+1}} \otimes \cdots \otimes v_{k_d}) =&   v_{k_1} \otimes  \cdots \otimes v_{k_{i+1}} \otimes v_{k_{i}} \otimes \cdots \otimes v_{k_d} \\
&(1\leq i\leq d-1),\\
f_1(s_d) ( v_{k_1} \otimes \cdots \otimes v_{k_d}) =& v_{k_1} \otimes \cdots \otimes v_{\overline{k_d}}.
\end{align*} 
Here, $k_1,\dots, k_d \in \{1,\dots,n,\overline{n},\dots,\overline{1}\}$, and $\overline{\overline{i}}=i$. 

On the other hand, another action $f_2$, was defined in Hu-Stoll \cite{HS04} and Mazorchuk-Stroppel \cite{MS16}, with its quantum version in Shoji-Sakamoto \cite{SS99}. Using the basis element $w_i$, this is the action below:
\begin{align*}
f_2(s_i) ( w_{k_1} \otimes  \cdots \otimes w_{k_i} \otimes w_{k_{i+1}} \otimes \cdots \otimes w_{k_d}) =&   w_{k_1} \otimes  \cdots \otimes w_{k_{i+1}} \otimes w_{k_{i}} \otimes \cdots \otimes w_{k_d} \\
&(1\leq i\leq d-1),\\
f_2(s_d) ( w_{k_1} \otimes \cdots \otimes w_{k_d}) =& (-1)^{\epsilon(k_d)}
w_{k_1} \otimes \cdots \otimes w_{{k_d}},
\end{align*} 
where $k_1,\dots, k_d \in \{1,\dots,n,\overline{1},\dots,\overline{n}\}$, and  $\epsilon(k_d)$ is $1$ if $k_d$ is unbarred, and $-1$ if $k_d$ is barred.

\begin{prop}\label{intertwinesweyl}
The automorphism $L^{\otimes d}$ on $V^{\otimes d}$ induced by $L$, intertwines the two actions of the Weyl group $B_d$, in the following sense
\begin{align}
L^{\otimes d} (f_1(s)(u))=f_2(s)(L^{\otimes d}(u)) \hspace{.5 in} \forall s\in B_d, u\in V^{\otimes d}. \label{compatible}
\end{align}
In other words, the following diagram holds
\[
\begin{CD}
 @.  V^{\otimes d} @. \curvearrowleft^{f_1}  @. S^B_d\\
  @.  @VV  L^{\otimes d} V @. @VV= V \\
 @.  V^{\otimes d} @. \curvearrowleft^{f_2}  @. S^B_d\\
\end{CD}
\]
\end{prop}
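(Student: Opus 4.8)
The plan is to verify the intertwining relation \eqref{compatible} on generators $s_1,\dots,s_d$ of $B_d$, since both $f_1$ and $f_2$ are algebra homomorphisms $\mathbb{C}B_d\to \operatorname{End}(V^{\otimes d})$ and $L^{\otimes d}$ is a fixed linear map; once we know $L^{\otimes d}\circ f_1(s_i)=f_2(s_i)\circ L^{\otimes d}$ for each $i$, composing yields the identity for every word in the $s_i$, hence for every $s\in B_d$. Moreover, because both sides of \eqref{compatible} are linear in $u$, it suffices to check the relation on the decomposable tensors $v_{k_1}\otimes\cdots\otimes v_{k_d}$ with $k_\ell\in\{1,\dots,n,\overline{n},\dots,\overline{1}\}$.

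For $1\le i\le d-1$ the two place-permutation operators $f_1(s_i)$ and $f_2(s_i)$ both simply swap the $i$-th and $(i+1)$-st tensor factors, and $L^{\otimes d}=L\otimes\cdots\otimes L$ acts factorwise; since swapping factors commutes with applying the same operator $L$ in each slot, the identity $L^{\otimes d}\circ f_1(s_i)=f_2(s_i)\circ L^{\otimes d}$ is immediate. The substantive case is $i=d$, which reduces (after applying $L$ in the first $d-1$ slots, where it does nothing relevant) to a single-tensor-factor statement: one must check, for each basis vector $v_k$ of $V$,
\begin{align*}
L\bigl(f_1(s_d)(v_k)\bigr)=f_2(s_d)\bigl(L(v_k)\bigr).
\end{align*}
Here $f_1(s_d)(v_k)=v_{\overline{k}}$ and $f_2(s_d)$ acts on $w_j$ by $(-1)^{\epsilon(j)}$. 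Using $L(v_i)=w_i+w_{\overline{i}}$ and $L(v_{\overline{i}})=w_i-w_{\overline{i}}$ for $1\le i\le n$, the left side for $k=i$ unbarred is $L(v_{\overline{i}})=w_i-w_{\overline{i}}$, while the right side is $f_2(s_d)(w_i+w_{\overline{i}})=w_i-w_{\overline{i}}$ (the unbarred $w_i$ keeps its sign, the barred $w_{\overline{i}}$ flips); for $k=\overline{i}$ barred, the left side is $L(v_i)=w_i+w_{\overline{i}}$ and the right side is $f_2(s_d)(w_i-w_{\overline{i}})=w_i+w_{\overline{i}}$. In both cases the two sides agree, which completes the verification.

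Finally one should note the implicit well-definedness point: the relation \eqref{compatible} only makes sense as stated because $L$ is a genuine linear isomorphism of $V$, which holds precisely when the characteristic of the base field is odd (as recorded before the statement); over $\mathbb{C}$ this is automatic. The only mild subtlety — and the point most worth double-checking — is the bookkeeping between the two different index sets, $\{1,\dots,n,\overline{n},\dots,\overline{1}\}$ for the $v$'s and $\{1,\dots,n,\overline{1},\dots,\overline{n}\}$ for the $w$'s, together with the sign convention $\epsilon(k_d)$; once these conventions are pinned down the computation is the short case analysis above, so I do not expect any real obstacle. The commutative diagram in the statement is then just a restatement of \eqref{compatible} with the equality $S^B_d=S^B_d$ on the right and $L^{\otimes d}$ on the left.
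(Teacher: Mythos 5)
Your proof is correct and follows essentially the same route as the paper's: reduce to the generators $s_1,\dots,s_d$ acting on pure tensors, observe the $1\le i\le d-1$ case is trivial since $L^{\otimes d}$ acts factorwise, and do the two-case check for $s_d$ (last index barred or unbarred). Your description of the $f_2(s_d)$ action — unbarred $w_i$ fixed, barred $w_{\overline{i}}$ negated — matches what the paper's own computation uses, and your flagging of the sign convention is warranted: the paper's stated definition of $\epsilon(k_d)$ (taking values $\pm 1$ with exponent base $-1$) is a typo, and the convention you use is the one actually consistent with the paper's proof.
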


\begin{proof}
It is enough to show equation (\ref{compatible}) holds for any $s_i$ with  $1\leq i\leq d$ and when $u$ is any pure tensor. When $1 \leq i\leq d-1$, this is a straightforward check. When $i=d$:

1) When $u=v_{i_1}\otimes \cdots \otimes v_{i_d}$, where $i_1,\dots,i_{d-1}$ can be either barred or unbarred, and $i_d$ is unbarred,
\begin{align*}
L^{\otimes d} (f_1(s_d)(u))=&L^{\otimes d}(v_{i_1}\otimes \cdots\otimes v_{i_{d-1}}\otimes v_{\overline{i_d}})\\
=&L(e_{i_1})\otimes \cdots \otimes L(e_{i_d})\otimes (w_{i_d}-w_{\overline{i_d}})\\
f_2(s_d)(L^{\otimes d}(u))=& f_2(s_d)(L(v_{i_1})\otimes \cdots\otimes L(v_{i_{d-1}})\otimes (w_{i_d}+w_{\overline{i_d}}))\\
=&L(e_{i_1})\otimes \cdots \otimes L(e_{i_d})\otimes (w_{i_d}-w_{\overline{i_d}})
\end{align*}
2) On the other hand, when $u=v_{i_1}\otimes \cdots \otimes v_{\overline{i_d}}$, where $i_1,\dots,i_{d-1}$ can be either barred or unbarred, and $i_d$ is unbarred,
\begin{align*}
L^{\otimes d} (f_1(s_d)(u))=&L^{\otimes d}(v_{i_1}\otimes \cdots\otimes v_{i_{d-1}}\otimes v_{{i_d}})\\
=& L(e_1)\otimes \cdots L(e_{i_{d-1}})\otimes (w_{i_d}+w_{\overline{i_d}})\\
f_2(s_d)(L^{\otimes d}(u))=&f_2(s_d)(L(e_{i_1})\otimes \cdots \otimes L(e_{i_{d-1}})\otimes (w_{i_d}-w_{\overline{i_d}}))\\
=&L(e_{i_1})\otimes \cdots\otimes L(e_{i_{d-1}})\otimes (w_{i_d}+w_{\overline{i_d}})
\end{align*}
\end{proof}

The space $V$ can be regarded as the defining representation for $\mathfrak{gl}(V)=\mathfrak{gl}_{2n}(\mathbb{C})$, using the ordered basis $\mathcal{B}_1$. For future convenience we will relabel the barred vectors as 
\begin{align*}
\mathcal{B}_1=\{v_1,\dots,v_n,v_{n+1}=v_{\overline{n}},\dots,v_{2n}=v_{\overline{1}}\}.
\end{align*}
 As a subalgebra of $\mathfrak{gl}_{2n}(\mathbb{C})$, $\mathfrak{gl}_{2n}^{\theta}$ also acts on $V$, and denote this action as $g_1: U(\mathfrak{gl}_{2n}^{\theta})\to \operatorname{End}(V)$. It induces an action on  $V^{\otimes d}$, denoted as $g_1$ by an abuse of notation, via 
\begin{align*}
g_1(x)(v_{i_1}\otimes \cdots \otimes v_{i_d}) = g_1(x)(v_{i_1})\otimes \cdots \otimes g_1(x)(v_{i_d}).
\end{align*}
This action is known to commute with the action $f_1$ of $B_d$ on $V^{\otimes d}$, when $\mathfrak{gl}_{2n}^{\theta}$ gets replaced by its group analogue in \cite{Green97}, or in the quantum case in \cite{BKLW18}.

Using the ordered basis $\mathcal{B}_2$, one can define an action of $\mathfrak{gl}_n(\mathbb{C})\oplus \mathfrak{gl}_n(\mathbb{C})$ on $V$, by regarding the vector space $V_1$ spanned by $\{w_1,\dots,w_n\}$ as the defining representation for the first copy $\mathfrak{gl}(V_1)=\mathfrak{gl}_n$ and the vector space $V_{-1}$ spanned by $\{w_{\overline{1}},\dots,w_{\overline{n}}\}$ as the defining representation for the second copy $\mathfrak{gl}(V_{-1})=\mathfrak{gl}_n$. We denote this action as $g_2 : U(\mathfrak{gl}_n(\mathbb{C})\oplus \mathfrak{gl}_n(\mathbb{C}))\to \operatorname{End}(V)$, and by an abuse of notation also denote by $g_2$ the induced action on $V^{\otimes d}$. This action is known to commute with the $f_2$ action of $B_d$ on $V^{\otimes d}$ (see \cite{HS04,MS16}.)

\begin{prop}\label{intertwinesenveloping}
The automorphism $L^{\otimes d}$ intertwines the action of $U^i$ and $U(\mathfrak{gl}_n\oplus\mathfrak{gl}_n)$, with respect to the homomorphism $\rho$, in the following sense
\begin{align*}
g_2(\rho(x))(L^{\otimes d}(u))=L^{\otimes d}(g_1(x)(u)) \hspace{.5 in} \forall x\in U^i,u\in V^{\otimes d}.
\end{align*}
In other words, the following diagram commutes

\[
\begin{CD}
U^i  @. \curvearrowright^{g_1}  @.  V^{\otimes d} \\
@V\rho  VV  @.  @VV L^{\otimes d} V  \\
U(\mathfrak{gl}_n\oplus\mathfrak{gl}_n) @. \curvearrowright^{g_2}  @.  V^{\otimes d} \\
\end{CD}
\]

\end{prop}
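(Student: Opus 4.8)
The plan is to reduce the statement to a check on generators, since both $g_1 \circ (\text{action on a single tensor factor})$ and $g_2 \circ \rho$ are algebra homomorphisms into $\operatorname{End}(V^{\otimes d})$ and $L^{\otimes d}$ is a linear isomorphism (char.\ odd). More precisely, it suffices to verify, for each algebra generator $x$ of $U^i$ (namely $\mathbf{e}_i,\mathbf{f}_i$ for $1\le i\le n-1$, $\mathbf{t}$, and $\mathbf{d}_i$ for $1\le i\le n$), that
\[
g_2(\rho(x)) \circ L^{\otimes d} = L^{\otimes d} \circ g_1(x) \quad \text{on } V^{\otimes d}.
\]
Because all these actions on $V^{\otimes d}$ are built from the coproduct (they act as derivations, i.e.\ $g_i(x)$ on a pure tensor is a sum over tensor slots of $x$ acting in that slot), and $L^{\otimes d}$ is a tensor power of $L$, this in turn reduces to the single-tensor-factor identity
\[
g_2(\rho(x)) \circ L = L \circ g_1(x) \quad \text{on } V,
\]
for each generator $x$. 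So the whole proposition comes down to a finite linear-algebra computation on the $2n$-dimensional space $V$.

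First I would record the two actions on $V$ explicitly in the respective bases. Under $\mathcal{B}_1 = \{v_1,\dots,v_n,v_{n+1}=v_{\bar n},\dots,v_{2n}=v_{\bar 1}\}$, the generator $\mathbf{e}_i = E_i + F_{2n-i}$ acts as the matrix $E_{i,i+1} + E_{2n-i+1,2n-i}$ (using $\mathbf{g}=\mathfrak{gl}_{2n}$, $F_j = E_{j+1,j}$), similarly for $\mathbf{f}_i$, $\mathbf{t}=E_n+F_n = E_{n,n+1}+E_{n+1,n}$, and $\mathbf{d}_i = H_i + H_{2n+1-i} = E_{i,i}+E_{2n+1-i,2n+1-i}$. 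Under $\mathcal{B}_2 = \{w_1,\dots,w_n,w_{\bar 1},\dots,w_{\bar n}\}$, the image $\rho(x)$ acts with the first copy of $\mathfrak{gl}_n$ on $\operatorname{span}\{w_1,\dots,w_n\}$ and the second copy on $\operatorname{span}\{w_{\bar 1},\dots,w_{\bar n}\}$ in the standard way, so e.g.\ $\rho(\mathbf{e}_i)=e_i+e_{\bar i}$ acts as $E_{i,i+1}$ in the first block plus $E_{\bar i,\overline{i+1}}$ in the second, $\rho(\mathbf{t})=h_n-h_{\bar n}$ acts diagonally as $+1$ on $w_n$ and $-1$ on $w_{\bar n}$, and $\rho(\mathbf{d}_i)=h_i+h_{\bar i}$ acts as $+1$ on each of $w_i$, $w_{\bar i}$. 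Then I would just evaluate both sides of $g_2(\rho(x))L = L g_1(x)$ on each basis vector $v_j$ and $v_{\bar j}$, using $L(v_i)=w_i+w_{\bar i}$ and $L(v_{\bar i})=w_i-w_{\bar i}$, and confirm equality. For the generators $\mathbf{e}_i,\mathbf{f}_i$ with $i\le n-1$ this is immediate because they do not touch the middle indices $n,n+1$, so $L$ behaves like a direct sum of the identity within each $\pm$ eigenspace; the only genuinely interesting checks are $\mathbf{t}$, $\mathbf{d}_n$, and $\mathbf{e}_{n-1},\mathbf{f}_{n-1}$ near the middle, where one must track how $v_n, v_{n+1}=v_{\bar n}$ map under $L$ to $w_n\pm w_{\bar n}$.

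The main obstacle, such as it is, is purely bookkeeping: the relabeling $v_{n+1}=v_{\bar n},\dots,v_{2n}=v_{\bar 1}$ reverses the order of the barred vectors in $\mathcal{B}_1$ relative to $\mathcal{B}_2$, so one has to be careful that $\mathbf{t}=E_n+F_n$ sends $v_n \leftrightarrow v_{n+1}=v_{\bar n}$ and that this is consistent with $\rho(\mathbf{t})=h_n-h_{\bar n}$ acting diagonally — the sign in $f_2(s_d)$ and in $L(v_{\bar i})=w_i - w_{\bar i}$ is exactly what makes this work, mirroring the intertwining of Weyl group actions already proved in Proposition~\ref{intertwinesweyl}. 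Concretely, on $v_n$: $L g_1(\mathbf{t})(v_n) = L(v_{\bar n}) = w_n - w_{\bar n}$, while $g_2(\rho(\mathbf{t}))L(v_n) = (h_n-h_{\bar n})(w_n+w_{\bar n}) = w_n - w_{\bar n}$, matching; and similarly on $v_{\bar n}$. Once these middle cases are dispatched and the easy cases noted, the single-factor identity holds on all of $V$, hence $g_2(\rho(x))\circ L = L\circ g_1(x)$ as operators on $V$, and taking $d$-th tensor powers and summing over slots (equivalently, using that $\rho$ is a homomorphism and the actions on $V^{\otimes d}$ factor through the iterated coproduct) gives $g_2(\rho(x)) L^{\otimes d} = L^{\otimes d} g_1(x)$ on $V^{\otimes d}$ for all generators $x$, and therefore for all $x \in U^i$, which is the assertion of the proposition.
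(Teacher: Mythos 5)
Your proposal is correct and takes essentially the same route as the paper's proof: reduce to algebra generators, then to the single-tensor-factor identity $g_2(\rho(x))\circ L = L\circ g_1(x)$ on $V$, then verify by direct computation in the two bases, and finally pass to $V^{\otimes d}$ via the tensor-power/iterated-coproduct structure of the actions. The paper spells out all four generator cases ($\mathbf{e}_j$, $\mathbf{f}_j$, $\mathbf{t}$, $\mathbf{d}_j$) on both $v_i$ and $v_{\bar i}$, whereas you sketch the routine cases and carry out the genuinely interesting one ($\mathbf{t}$ on $v_n$) correctly; the substance and structure of the argument are the same.
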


\begin{proof}
It is enough to check the following holds for any $x$ that is a generator among $\mathbf{e}_i, \mathbf{f}_i, \mathbf{t}, \mathbf{d}_i$, and for any $v \in V$.
\begin{align}
g_2(\rho(x))(L(v))=L(g_1(x)(v)). \label{intertwineonV}
\end{align}
The main claim then follows from extending the action of $U^i$ and $U(\mathfrak{gl}_n\oplus\mathfrak{gl}_n)$ from $V$ to $V^{\otimes d}$:

1) To check that (\ref{intertwineonV}) holds for $x=\mathbf{e}_j$, $1\leq j\leq n-1$, we first check it when $v=v_i$ for $1\leq i\leq n$:
\begin{align*}
&g_2(\rho(\mathbf{e}_j))L(v_i)=g_2(\rho(\mathbf{e}_j))(w_i+w_{\overline{i}})=g_2(e_j+e_{\overline{j}})(w_i+w_{\overline{i}})=\delta_{j+1,i}(w_{i-1}+w_{\overline{i-1}}),\\
&L(g_1(\mathbf{e}_j)(v_i))=L(g_1(E_j+F_{2n-j})v_i)=L(\delta_{j+1,i}v_{i-1})=\delta_{j+1,i}(w_{i-1}+w_{\overline{i-1}}).
\end{align*}
Here, the last equality holds because $n+1\leq 2n-j\leq 2n-1$ and $F_{2n-j}=E_{2n-j+1,2n-j}$,  where $E_{pq}$ is the matrix unit with $1$ in the $(p,q)$-position and $0$ elsewhere. Since $2n-j\neq i$, the action of $F_{2n-j}$ is zero on $v_i$. 

When $v=v_{\overline{i}}$ for $1\leq i\leq n$:
\begin{align*}
&g_2(\rho(\mathbf{e}_j))(L(v_{\overline{i}}))=g_2(\rho(\mathbf{e}_j))(w_i-w_{\overline{i}})=g_2(e_j+e_{\overline{j}})(w_i-w_{\overline{i}})=\delta_{j+1,i}(w_{i-1}-w_{\overline{i-1}}),\\
&L(g_1(\mathbf{e}_j)(v_{\overline{i}}))=L(g_1(E_j+F_{2n-j})(v_{2n+1-i}))=L(\delta_{2n-j,2n+1-i}v_{2n+2-i}),\\
&=\delta_{i,j+1}L(v_{\overline{i-1}})=\delta_{i,j+1}(w_{i-1}-w_{\overline{i-1}}).
\end{align*}
The second equality holds because $n+1\leq 2n+1-i\leq 2n$, $2\leq j+1\leq n$, $j+1\neq 2n+1-i$. Hence the action of $E_j=E_{j,j+1}$ is zero on $v_{2n+1-i}$.

2) We now check that (\ref{intertwineonV}) holds for $x=\mathbf{f}_j$, $1\leq j\leq n-1$ and $v=v_i$, $1\leq i\leq n$:
\begin{align*}
&g_2(\rho(\mathbf{f}_j))(L(v_i))=g_2(\rho(\mathbf{f}_j))(w_i+w_{\overline{i}})=g_2(f_j+f_{\overline{j}})(w_i+w_{\overline{i}})=\delta_{j,i}(w_{i+1}+w_{\overline{i+1}}),\\
&g_1(\mathbf{f}_j)(v_i)=g_1(F_j+E_{2n-j})(v_i)=\delta_{j,i}v_{i+1},\\
&L(g_1(\mathbf{f}_j)(v_i))=\delta_{j,i}(w_{i+1}+w_{\overline{i+1}}).
\end{align*}
Here, $E_{2n-j}$ acts on $v_i$ by zero. For a more careful discussion, please refer to case 1).

When $v=v_{\overline{i}}$, $1\leq i\leq n$,
\begin{align*}
&g_2(\rho(\mathbf{f}_j))L(v_{\overline{i}})=g_2(f_j+f_{\overline{j}})(w_i-w_{\overline{i}})=\delta_{j,i}(w_{i+1}-w_{\overline{i+1}}),\\
&g_1(\mathbf{f}_j)(v_{\overline{i}})=g_1(F_j+E_{2n-j})(v_{2n+1-i})=\delta_{2n+1-j,2n+1-i}v_{2n-i}=\delta_{i,j}v_{\overline{i+1}},\\
&L(g_1(\mathbf{f}_j)(v_{\overline{i}}))=\delta_{i,j}(w_{i+1}-w_{\overline{i+1}}).
\end{align*}
Here, $F_j$ acts on $v_{2n+1-i}$ by zero.

3) We now check the case when $x=\mathbf{t}$, $v=v_i$ for $1\leq i\leq n$:
\begin{align*}
&g_2(\rho(\mathbf{t}))L(v_i)=g_2(h_n-h_{\overline{n}})(w_i+w_{\overline{i}})=\delta_{n,i}(w_i-w_{\overline{i}}),\\
& g_1(\mathbf{t})=(E_n+F_n)v_i=\delta_{n+1,i}v_{i-1}+\delta_{n,i}v_{i+1}=\delta_{n+1,i}v_n+\delta_{n,i}v_{n+1}\\
&=\delta_{n+1,i}v_n+\delta_{n,i}v_{\overline{n}}=\delta_{n,i}v_{\overline{n}},\\
&L(g_1(\mathbf{t})(v_i))=\delta_{n,i}(w_n-w_{\overline{n}}).
\end{align*}
When $v=v_{\overline{i}}$ for $1\leq i\leq n$:
\begin{align*}
&g_2(\rho(\mathbf{t}))L(v_{\overline{i}})=g_2(h_n-h_{\overline{n}})(w_i-w_{\overline{i}})=\delta_{i,n}(w_n+w_{\overline{n}}),\\
&g_1(\mathbf{t})(v_{\overline{i}})=g_1(E_n+F_n)(v_{2n+1-i})=\delta_{n+1,2n+1-i}v_{2n-i}=\delta_{i,n}v_n,\\
& L(g_1(\mathbf{t})(v_{\overline{i}}))=\delta_{i,n}(w_n+w_{\overline{n}}).
\end{align*}

4) We now prove the case when $x=\mathbf{d}_j$ and $v=v_i$, for $1\leq i,j\leq n$:
\begin{align*}
&g_2(\rho(\mathbf{d}_j))(L(v_i))=g_2(h_j+h_{\overline{j}})(w_i+w_{\overline{i}})=\delta_{i,j}(w_i+w_{\overline{i}}),\\
&g_1(\mathbf{d}_j)(v_i)=g_1(H_j+H_{2n+1-j})(v_i)=\delta_{i,j}v_i,\\
&L(g_1(\mathbf{d}_j)(v_i))=\delta_{i,j}(w_i+w_{\overline{i}}).
\end{align*}
When $v=v_{\overline{i}}$, $1\leq i\leq n$:
\begin{align*}
&g_2(\rho(\mathbf{d}_j))(L(v_{\overline{i}}))=g_2(h_j+h_{\overline{j}})(w_i-w_{\overline{i}})=\delta_{i,j}(w_i-w_{\overline{i}}),\\
&g_1(\mathbf{d}_j)(v_{\overline{i}})=g_1(H_j+H_{2n+1-j})(v_{2n+1-i})=\delta_{i,j}v_{2n+1-i}=\delta_{i,j}v_{\overline{i}},\\
&L(g_1(\mathbf{d}_j)(v_{\overline{i}}))=\delta_{i,j}(w_i-w_{\overline{i}}).
\end{align*}
Hence we have checked all cases.
\end{proof}

\subsection{Root vectors in $U^i$}
We also introduce the notion of root vectors for $\mathfrak{gl}_{2n}(\mathbb{C})^{\theta}$ as we did prior to Lemma~\ref{independentofbrackets}. Similar to the notation earlier, we distinguish between the first and second copy in $\mathfrak{gl}_n(\mathbb{C})\oplus\mathfrak{gl}_n(\mathbb{C})$ by letting $\mathfrak{g}_1=\mathfrak{gl}_n(\mathbb{C})$, $\mathfrak{g}_2=\mathfrak{gl}_n(\mathbb{C})$, and for $\alpha\in \Phi_n$ a root in $\mathfrak{gl}_n$, let $Z_{\alpha}\in\mathfrak{g}_1,\overline{Z}_{\alpha}\in \mathfrak{g}_2$ be the associated root vectors. We adopt the previous notation that $\epsilon_i=\mathbf{d}_i^*$ for $1\leq i \leq n$ and $\mu_i=H_i^*$ for $1\leq i\leq 2n$. Let 
\begin{align*}
\Phi_{2n}^+=&\{\mu_i-\mu_j\hspace{.1 in}| \hspace{.1 in} 1\leq i<j\leq 2n \},\\
\Pi_{2n}=&\{\mu_i-\mu_{i+1}\hspace{.1 in}| \hspace{.1 in} 1\leq i\leq 2n-1 \},
\end{align*}
be the set of positive roots and simple roots in $\mathfrak{gl}_{2n}(\mathbb{C})$, respectively. For any $\alpha\in \Phi_{2n}^+$, define root vectors $X_{\alpha}$ recursively as follows. For the simple roots:
\begin{align*}
X_{\mu_i-\mu_{i+1}}=&\mathbf{e}_i \hspace{.2 in} (1\leq i\neq n-1),\\
X_{\mu_n-\mu_{n+1}}=&\mathbf{t} \hspace{.2 in} (1\leq i\neq n-1),\\
X_{\mu_i-\mu_{i+1}}=&\mathbf{f}_{2n-i} \hspace{.2 in} (n+1\leq i\neq 2n-1).
\end{align*}
On $\alpha$ that is a sum of simple roots, $X_{\alpha}$ is defined recursively as (\ref{rootvectorsinfixed}). Versions of Lemmas~\ref{independentofbrackets} through \ref{span} is also true for root vectors in $\mathfrak{g}^{\theta}$. In particular, we have the following.
\begin{lemma}
The root vectors $\{X_{\alpha}\}_{\alpha\in \Phi^+_{2n}}$, $\{\mathbf{d}_i\}_{1\leq i\leq n}$ and $\{\mathbf{t}\}$, together span $\mathfrak{g}^{\theta}$.
\end{lemma}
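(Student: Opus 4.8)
The plan is to mirror the odd case (Theorem~\ref{span}) essentially verbatim, exploiting that the even case has all the structural ingredients already developed. First I would reduce, as before, to showing that the root vectors together with the abelian part span the derived subalgebra $(\mathfrak{g}^\theta)' = [\mathfrak{g}^\theta, \mathfrak{g}^\theta]$: since $\mathfrak{h}^\theta$ is spanned by $\mathbf{d}_1,\dots,\mathbf{d}_n$, and $\mathbf{t}$ together with the $\mathbf{d}_i$ and the root vectors generate $\mathfrak{g}^\theta$ as a Lie algebra, every element of $\mathfrak{g}^\theta$ is a sum of an element of $\mathfrak{h}^\theta$ (plus possibly a multiple of $\mathbf{t}$) and an element of $(\mathfrak{g}^\theta)'$; note that $\mathbf{t}$ itself is a root vector, namely $X_{\mu_n-\mu_{n+1}}$, so it is already on the list. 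Hence it suffices to show that an arbitrary iterated bracket
\[
\underline{\alpha}_{i_1}\circ \underline{\alpha}_{i_2}\circ \cdots \circ \underline{\alpha}_{i_s}(z),
\]
where each $\underline{\alpha}_{i_k} = \operatorname{ad} X_{\alpha_{i_k}}$ for a simple root $\alpha_{i_k}\in\Pi_{2n}$ and $z$ ranges over the spanning set $\{\mathbf{d}_1,\dots,\mathbf{d}_n,\mathbf{t}\}$ of the non-derived part, lies in the span of root vectors whose expressions involve only the roots $\alpha_{i_1},\dots,\alpha_{i_s}$.

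Next I would invoke the even-case analogues of Lemma~\ref{independentofbrackets} (independence of bracketing order), Lemma~\ref{admissible} (nonzero iterated brackets force an admissible sequence), Lemma~\ref{shuffle} (a permutation of consecutive simple roots gives a scalar multiple of the standard order), and Lemma~\ref{keyrewrite} (the Jacobi-type rewriting identity, which holds in any Lie algebra and so needs no change). The paper already asserts these hold; I would state them collectively as ``Versions of Lemmas~\ref{independentofbrackets} through \ref{span}'' and cite them. Then the induction on $s$ proceeds exactly as in Theorem~\ref{span}: if there are no repeated simple roots among $\alpha_{i_2},\dots,\alpha_{i_s}$, admissibility forces them to be a permutation of consecutive simple roots, and Lemma~\ref{shuffle} identifies the result as a root vector; if there is a repetition, pick the rightmost left-endpoint of a repeated pair, push it rightward past non-adjacent roots (at the cost of a sign), and land in Case~1) $\underline{\alpha}\circ\underline{\alpha}\circ\underline{\gamma}(y)$ or Case~2) $\underline{\alpha}\circ\underline{\beta}\circ\underline{\alpha}\circ\underline{\gamma}(y)$, each of which is rewritten via the Jacobi identity and Lemma~\ref{keyrewrite} into iterated brackets of strictly smaller length, closing the induction.

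The one place where genuine care is needed — and the main obstacle — is the constant $c$ appearing in Case~1): in the odd case one finds $[X_\alpha,[X_\alpha,X_\gamma]] = cX_\alpha$ with $c=-4$ precisely when $\{\alpha,\gamma\}$ corresponds to the ``$j$-node'' pair $\{\epsilon_n-\epsilon_{n+1},-\epsilon_n+\epsilon_{n+1}\}$, reflecting relations (\ref{efn})--(\ref{fen}). In the even case the special relations are (R3) and (R4): (R3) says $(\operatorname{ad}\mathbf{e}_{n-1})^2\mathbf{t}=0$ and $(\operatorname{ad}\mathbf{f}_{n-1})^2\mathbf{t}=0$, while (R4) says $(\operatorname{ad}\mathbf{t})^2\mathbf{e}_{n-1}=\mathbf{e}_{n-1}$ and $(\operatorname{ad}\mathbf{t})^2\mathbf{f}_{n-1}=\mathbf{f}_{n-1}$. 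So here the relevant constant is $c=1$, arising when $\alpha$ is the middle root $\mu_n-\mu_{n+1}$ (i.e. $X_\alpha=\mathbf{t}$) and $\gamma$ is an adjacent root; and one must also check that $(\operatorname{ad}\mathbf{e}_{n-1})^2\mathbf{t}=0$ type relations do not obstruct the rewriting. I would verify by direct computation, using the explicit realization inside $\mathfrak{gl}_{2n}(\mathbb{C})$ exactly as in the proof of Proposition~\ref{presentation2}, that in Cases~1) and 2) every double bracket $[X_\alpha,[X_\alpha,X_\gamma]]$ is either $0$ or $\pm X_\alpha$ (with the nonzero case occurring only at the middle node), so that the length reduction goes through and the induction terminates, exactly as in Theorem~\ref{span}. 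Everything else is formally identical to the odd case.
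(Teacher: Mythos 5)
Your strategy matches the paper exactly: for this lemma the paper offers no separate argument, merely asserting that ``Versions of Lemmas~\ref{independentofbrackets} through \ref{span} is also true for root vectors in $\mathfrak{g}^{\theta}$,'' which is precisely the mirror-the-odd-case approach you propose. You also correctly isolate the one place needing a fresh computation, namely the double-bracket constant $c$ that appears in Cases 1) and 2) of the Theorem~\ref{span} argument, and you correctly locate the special behaviour at the middle node $\mu_n-\mu_{n+1}$ via (R3)--(R4).

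One inaccuracy, which does not break the argument but is worth correcting: you state that the nonzero double bracket equals $\pm X_\alpha$, yet the very relation you quote says otherwise. Relation (R4) gives $(\operatorname{ad}\mathbf{t})^2\mathbf{e}_{n-1}=\mathbf{e}_{n-1}$ and $(\operatorname{ad}\mathbf{t})^2\mathbf{f}_{n-1}=\mathbf{f}_{n-1}$, so when $X_\alpha=\mathbf{t}$ and $X_\gamma\in\{\mathbf{e}_{n-1},\mathbf{f}_{n-1}\}$ the double bracket $[X_\alpha,[X_\alpha,X_\gamma]]$ equals $X_\gamma$, not $\pm X_\alpha$. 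This is a genuine contrast with the odd case, where (\ref{efn})--(\ref{fen}) yield $(\operatorname{ad}\mathbf{e}_n)^2\mathbf{f}_n=-4\mathbf{e}_n=-4X_\alpha$. The length-reduction engine of the Theorem~\ref{span} proof only requires that the double bracket be a scalar multiple of a simple root vector, so the induction closes either way; indeed in Case~2) the Lemma~\ref{keyrewrite} expansion gives $\tfrac{1}{2}[X_\beta,X_\gamma]+\tfrac{1}{2}[X_\gamma,X_\beta]+0=0$ at the middle node, which is if anything cleaner than the odd case. So your proposal is correct in strategy and conclusion; just replace ``$\pm X_\alpha$'' by ``a scalar multiple of $X_\gamma$.''
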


For $\alpha\in\Phi_{n}$, an element $x\in \mathfrak{g}^{\theta}$ is a weight vector of weight $\alpha$, under the adjoint action of $\mathbf{d}_1,\dots,\mathbf{d}_n$, if and only if 
\begin{align*}
[\mathbf{d}_i,x]=\alpha(\mathbf{d}_i)x \hspace{.2 in} (1\leq i\leq n).
\end{align*}

Define a map $t: \Phi_{2n}^+\to \Phi_{n}\cup\{0\}$, such that on the simple roots $\alpha \in \Pi_{2n}$, $X_{\alpha}$ is a weight vector of weight $t(\alpha)$. That is to say,
\begin{align*}
t(\mu_i-\mu_{i+1})=&\epsilon_i-\epsilon_{i+1} \hspace{.2 in} (1\leq i\leq n-1), \\
t(\mu_n-\mu_{n+1})=&0, \\ 
t(\mu_i-\mu_{i+1})=&-\epsilon_{2n-i}+\epsilon_{2n+1-i}  \hspace{.2 in} (n+1\leq i\leq 2n-1).
\end{align*} 
Extend $t$ linearly so that on a sum of simple roots $\alpha=\alpha_1+\cdots \alpha_i \in \Phi_{2n}^+$, define 
\begin{align*}
t(\alpha)=t(\alpha_1)+\cdots+t(\alpha_i).
\end{align*}

Then a version of Lemma~\ref{correctweight} is also true. Moreover, the stalks of $t$ are described below.
\begin{lemma}
For the map $t$ defined above,
\begin{align*}
t^{-1}(0)=&\{\mu_i-\mu_{2n+1-i} \hspace{.1 in} | \hspace{.1 in} 1\leq i \leq n \}, \\
t^{-1}(\epsilon_i-\epsilon_n)=&\{\mu_i-\mu_n\} \hspace{.5 in} (1\leq i \leq n-1),\\
t^{-1}(-\epsilon_i+\epsilon_n)=&\{\mu_n-\mu_{2n+1-i}\} \hspace{.5 in} (1\leq i \leq n-1),\\
t^{-1}(\epsilon_i-\epsilon_j)=&\{\mu_i-\mu_j,\mu_i-\mu_{2n+1-j}\}  \hspace{.5 in} (1\leq i<j \leq n-1),\\
t^{-1}(-\epsilon_i+\epsilon_j)=&\{\mu_j-\mu_{2n+1-i}, \mu_{2n+1-j}-\mu_{2n+1-i}\}  \hspace{.5 in} (1\leq i<j \leq n-1).
\end{align*}
\end{lemma}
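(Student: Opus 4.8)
The statement to prove is the description of the fibers of the weight map $t\colon \Phi_{2n}^+ \to \Phi_n \cup \{0\}$ for the even case, and the approach should mirror almost verbatim the proof of Proposition~\ref{stalksofweights} in the odd case. The key point is that $t$ is additive under splitting a positive root $\mu_i - \mu_j$ into a sum of simple roots, so for each target root $\beta \in \Phi_n \cup \{0\}$ I need to exhibit which positive roots of $\mathfrak{gl}_{2n}$ map to $\beta$, and the only tool needed is to track how a chosen path of consecutive simple roots telescopes under $t$.

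First I would record the elementary telescoping identity: for $i < j$, writing $\mu_i - \mu_j = (\mu_i - \mu_{i+1}) + \dots + (\mu_{j-1} - \mu_j)$ and applying $t$ termwise gives $t(\mu_i - \mu_j)$ as a sum whose intermediate $\epsilon$-terms cancel, exactly as in the odd-case computation. Concretely, using $t(\mu_k - \mu_{k+1}) = \epsilon_k - \epsilon_{k+1}$ for $k \le n-1$, $t(\mu_n - \mu_{n+1}) = 0$, and $t(\mu_k - \mu_{k+1}) = -\epsilon_{2n-k} + \epsilon_{2n+1-k}$ for $k \ge n+1$, one computes case by case: for $1 \le i < j \le n$ one gets $t(\mu_i - \mu_j) = \epsilon_i - \epsilon_j$ (with $\epsilon_n$ interpreted as $0$ when $j = n$), for $n+1 \le i < j \le 2n$ one gets $t(\mu_i - \mu_j) = -\epsilon_{2n+1-i} + \epsilon_{2n+1-j}$ (again $\epsilon_n \mapsto 0$ when $2n+1-i = n$), and for $i \le n < j$ one gets the "mixed" value $\epsilon_i - \epsilon_{2n+1-j}$ (with the appropriate $\epsilon_n \mapsto 0$ conventions at the endpoints).

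Next I would read off the fibers directly from these formulas. For $t^{-1}(0)$: the value is $0$ precisely when $i \le n < j$ and $\epsilon_i = \epsilon_{2n+1-j}$, i.e. $j = 2n+1-i$, giving $\{\mu_i - \mu_{2n+1-i} : 1 \le i \le n\}$; one checks no root with both indices on the same side can vanish since $\epsilon_i - \epsilon_j = 0$ forces $i = j$. For $t^{-1}(\epsilon_i - \epsilon_j)$ with $1 \le i < j \le n-1$: one contribution is $\mu_i - \mu_j$ (both indices $\le n-1$), and the other comes from the mixed case $\epsilon_i - \epsilon_{2n+1-j'}$ with $2n+1-j' = j$, i.e. $j' = 2n+1-j$, giving $\mu_i - \mu_{2n+1-j}$; these are the only two. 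The fibers $t^{-1}(-\epsilon_i + \epsilon_j)$ are handled symmetrically using the "both indices $\ge n+1$" formula together with the mixed formula, and the two boundary fibers $t^{-1}(\pm(\epsilon_i - \epsilon_n))$ come from the single mixed root where one endpoint hits index $n$ (so an $\epsilon_n$ term is already absent) — here the telescoping produces only one positive root because the would-be second preimage collapses.

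The only mildly delicate bookkeeping — and the step I expect to be the main obstacle, though it is routine rather than deep — is keeping the $\epsilon_n \mapsto 0$ convention straight, since in $\mathfrak{gl}_n$ the map $t$ sends $\mu_n - \mu_{n+1}$ to $0$ rather than to a genuine root, so several telescoping sums silently drop their "middle" term; this is exactly why the fibers over roots involving $\epsilon_n$ (the second and third lines) are singletons while the generic fibers (last two lines) have size two. Organizing the verification as: (a) prove the four telescoping formulas for $t(\mu_i-\mu_j)$ by the termwise-cancellation argument, then (b) invert each formula, makes the whole argument a short computation, and I would explicitly note that it is "entirely parallel to the proof of Proposition~\ref{stalksofweights}" to avoid repeating boilerplate.
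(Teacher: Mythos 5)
Your telescoping plan is the right approach and agrees with the proof sketch the paper itself has in mind (it just says the even case is ``very similar to that of Proposition~\ref{stalksofweights} and is left to the reader''), but the single step you wave off as ``mildly delicate bookkeeping'' is exactly where the argument fails. You assert that for the boundary fibers $t^{-1}(\pm(\epsilon_i - \epsilon_n))$ ``the telescoping produces only one positive root because the would-be second preimage collapses.'' This is not verified, and it is false. Because $t(\mu_n-\mu_{n+1})=0$, adjoining the middle simple root costs nothing: for $1\le i\le n-1$,
\[
t(\mu_i-\mu_{n+1}) \;=\; t(\mu_i-\mu_n)+t(\mu_n-\mu_{n+1}) \;=\; (\epsilon_i-\epsilon_n)+0 \;=\; \epsilon_i-\epsilon_n,
\]
so $\mu_i-\mu_{n+1}$ is a second, genuinely distinct preimage of $\epsilon_i-\epsilon_n$; likewise $t(\mu_{n+1}-\mu_{2n+1-i})=t(\mu_n-\mu_{2n+1-i})=-\epsilon_i+\epsilon_n$. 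The correct fibers are therefore $t^{-1}(\epsilon_i-\epsilon_n)=\{\mu_i-\mu_n,\ \mu_i-\mu_{n+1}\}$ and $t^{-1}(-\epsilon_i+\epsilon_n)=\{\mu_n-\mu_{2n+1-i},\ \mu_{n+1}-\mu_{2n+1-i}\}$, each of size two.

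This is not a cosmetic slip in your write-up: it shows that the lemma as printed (which you are reproducing) is itself incomplete, and a cardinality check makes the gap visible at once. The lemma's fibers account for $n + 2(n-1) + 2\cdot 2\binom{n-1}{2} = 2n^2-3n+2$ positive roots, while $|\Phi_{2n}^+| = \binom{2n}{2} = 2n^2-n$; the $2n-2$ missing roots are exactly $\mu_i-\mu_{n+1}$ ($1\le i\le n-1$) and $\mu_{n+1}-\mu_j$ ($n+2\le j\le 2n$). The reason the analogous fibers in the odd-case Proposition~\ref{stalksofweights} really are singletons is that there the middle simple root maps to $-\epsilon_n+\epsilon_{n+1}\neq 0$; the even case differs qualitatively precisely because $t$ kills $\mu_n-\mu_{n+1}$, and pattern-matching to the odd case without tracking this distinction is what produced the error. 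Relatedly, your parenthetical ``$\epsilon_n$ interpreted as $0$'' is a symptom of the same confusion: $\epsilon_n=\mathbf d_n^*$ is a perfectly well-defined nonzero functional; it is only $t(\mu_n-\mu_{n+1})$ that vanishes. If you do the case-by-case computation you already set up honestly, you will be forced to the two-element fibers above, and you should flag the discrepancy with the stated lemma rather than assert a collapse that does not occur.
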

\begin{proof}
The proof is very similar to that of Proposition~\ref{stalksofweights} and is left to the reader.
\end{proof}

\subsection{An algebra isomorphism with $U(\mathfrak{gl}_n\oplus\mathfrak{gl}_n)$}
We now re-index the root vectors using their weights. Specifically, define $\mathbf{t}_1,\dots,\mathbf{t}_{n-1},\mathbf{t}_n$ such that  $\mathbf{t}_n=\mathbf{t}$,
\begin{align}
\mathbf{t}_{i-1}=\mathbf{t}_i+X_{\mu_{i-1}-\mu_{2n+1-i}} \hspace{.2 in} (2\leq i\leq n). \label{morets}
\end{align}
For positive root vectors,
\begin{align*}
 X_{\epsilon_i-\epsilon_n}=&X_{\mu_i-\mu_n} \hspace{.2 in } (1\leq i\leq n-1),\\
 X_{\epsilon_i-\epsilon_j}=&X_{\mu_i-\mu_j}, \hspace{.2 in}  X'_{\epsilon_i-\epsilon_j}=X_{\mu_i-\mu_{2n+1-j}} \hspace{.2 in } (1\leq i<j\leq n-1).
\end{align*}

Recall that $e_i,f_i,h_i\in \mathfrak{g}_1$ and $e_{\overline{i}},f_{\overline{i}},h_{\overline{i}} \in\mathfrak{g}_2$ are Chevalley generators of each copy of $\mathfrak{gl}_n(\mathbb{C})$.
\begin{lemma}\label{rootvectorsfortheothercase}
Under the map $\rho$, for $\alpha \in \Phi_{n}$
\begin{align}
\rho(\mathbf{t}_i)=&h_i-h_{\overline{i}} \hspace{.2 in}(1\leq i\leq n), \label{anotherimage1}\\
\rho(X_{\alpha})=&Z_{\alpha}+\overline{Z}_{\alpha}, \label{anotherimage2} \\
\rho(X_{\alpha}')=&Z_{\alpha}-\overline{Z}_{\alpha}. \label{anotherimage3}
\end{align}
In particular, $\rho$ is an isomorphism between Lie algebras.
\end{lemma}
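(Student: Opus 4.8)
The plan is to mirror, essentially verbatim, the proof of Lemma~\ref{rootvectors} and Theorem~\ref{basisoffixedpoint} from the odd case. Recall that $\rho$ is already known to be an algebra homomorphism, and that $\dim_{\mathbb{C}}\mathfrak{g}^{\theta}=2n^2=\dim_{\mathbb{C}}(\mathfrak{gl}_n(\mathbb{C})\oplus\mathfrak{gl}_n(\mathbb{C}))$: indeed $\theta$ is conjugation by the anti-diagonal permutation matrix, so $\mathfrak{g}^{\theta}$ is the centralizer of a diagonalizable matrix with two $n$-dimensional eigenspaces (equivalently, $x=\theta(x)$ pairs up the $4n^2$ matrix entries with no fixed pair). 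Hence it suffices to prove the three displayed families of identities, since these will immediately exhibit $\rho|_{\mathfrak{g}^{\theta}}$ as surjective, and a surjection between equidimensional spaces is an isomorphism; the induced map $U^i\to U(\mathfrak{gl}_n\oplus\mathfrak{gl}_n)$ of universal enveloping algebras is then an isomorphism as well.

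First I would prove (\ref{anotherimage2}) and (\ref{anotherimage3}) for the ``non-crossing'' roots $\alpha=\epsilon_i-\epsilon_j$ with $1\le i<j\le n-1$, together with their negatives, by fixing $j$ and inducting downward on $i$. The base case $i=j-1$ is the definition: $X_{\epsilon_{j-1}-\epsilon_j}=X_{\mu_{j-1}-\mu_j}=\mathbf{e}_{j-1}\mapsto e_{j-1}+e_{\overline{j-1}}=Z_{\epsilon_{j-1}-\epsilon_j}+\overline{Z}_{\epsilon_{j-1}-\epsilon_j}$, and similarly $X'_{\epsilon_i-\epsilon_{i+1}}$ reduces (after one more bracket through the central node, see the next paragraph) to $e_i-e_{\overline{i}}$. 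For the inductive step one uses the even analogue of Corollary~\ref{independentofbrackets} to write $X_{\mu_{i-1}-\mu_j}=[\mathbf{e}_{i-1},X_{\mu_i-\mu_j}]$, applies $\rho$, and invokes $[\mathfrak{g}_1,\mathfrak{g}_2]=0$ together with the type-$A$ recursion $[e_{i-1},Z_{\epsilon_i-\epsilon_j}]=Z_{\epsilon_{i-1}-\epsilon_j}$ (and its barred version). The root vectors $X_{\pm(\epsilon_i-\epsilon_n)}$ and the primed root vectors $X'$ are built from brackets that pass through the fixed node $\mu_n-\mu_{n+1}$, i.e.\ through $\mathbf{t}$, so their identities have to be run simultaneously with (\ref{anotherimage1}).

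Next I would establish (\ref{anotherimage1}) by downward induction, starting from $\rho(\mathbf{t}_n)=\rho(\mathbf{t})=h_n-h_{\overline{n}}$. The recursion (\ref{morets}) expresses $\mathbf{t}_{i-1}-\mathbf{t}_i$ as the weight-zero root vector with left index $i-1$, which, using the even analogues of Corollary~\ref{independentofbrackets} and Lemma~\ref{keyrewrite}, can be arranged as the iterated bracket $[\mathbf{e}_{i-1},[\mathbf{t}_i-\mathbf{t}_{i+1},\mathbf{f}_{i-1}]]$ (with the convention $\mathbf{t}_{n+1}=0$, so that for $i=n$ the middle factor is simply $\mathbf{t}$). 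Applying $\rho$, using the inductive hypothesis together with the block computations $[h_i,f_{i-1}]=f_{i-1}$, $[h_{i+1},f_{i-1}]=0$ and $[e_{i-1},f_{i-1}]=h_{i-1}-h_i$ in each copy of $\mathfrak{gl}_n$, the inner bracket collapses to $f_{i-1}-f_{\overline{i-1}}$ and the outer bracket to $(h_{i-1}-h_{\overline{i-1}})-(h_i-h_{\overline{i}})$, so that $\rho(\mathbf{t}_{i-1})=\rho(\mathbf{t}_i)+(h_{i-1}-h_{\overline{i-1}})-(h_i-h_{\overline{i}})=h_{i-1}-h_{\overline{i-1}}$. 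With (\ref{anotherimage1}) in hand I would then close the induction for the primed root vectors in (\ref{anotherimage3}) and for $X_{\pm(\epsilon_i-\epsilon_n)}$ in (\ref{anotherimage2}) by the same bracket manipulations; the negative roots are entirely symmetric.

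Finally, from (\ref{anotherimage1})--(\ref{anotherimage3}) and $\rho(\mathbf{d}_i)=h_i+h_{\overline{i}}$, the image of $\rho|_{\mathfrak{g}^{\theta}}$ contains $Z_\alpha+\overline{Z}_\alpha$, $Z_\alpha-\overline{Z}_\alpha$, $h_i+h_{\overline{i}}$ and $h_i-h_{\overline{i}}$, hence (working over $\mathbb{C}$) all of $Z_\alpha$, $\overline{Z}_\alpha$, $h_i$ and $h_{\overline{i}}$; thus $\rho|_{\mathfrak{g}^{\theta}}$ is onto $\mathfrak{gl}_n\oplus\mathfrak{gl}_n$, and by the dimension count it is a Lie algebra isomorphism, whence $\rho\colon U^i\to U(\mathfrak{gl}_n\oplus\mathfrak{gl}_n)$ is an algebra isomorphism. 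The only genuinely delicate point, exactly as in the odd case, is the bookkeeping of the third paragraph: choosing the bracketings that make the $\mathbf{t}_i$-recursion and the central-crossing root vectors telescope cleanly under $\rho$, and tracking the signs coming from the $-h_{\overline{i}}$ summands; the rest is a routine transcription of the odd-case arguments.
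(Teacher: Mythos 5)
Your proposal is correct and follows essentially the same inductive scheme as the paper's own proof: base cases on simple root vectors, downward induction on $i$ via the even analogue of Corollary~\ref{independentofbrackets}, the telescoping $\mathbf{t}_i$-recursion $\rho(\mathbf{t}_{i-1})=\rho(\mathbf{t}_i)+(h_{i-1}-h_{\overline{i-1}})-(h_i-h_{\overline{i}})$, and a surjectivity-plus-dimension count to conclude the isomorphism (a step the paper leaves tacit). Two minor remarks: Lemma~\ref{keyrewrite} is not actually needed for this lemma (Corollary~\ref{independentofbrackets} suffices to rearrange the brackets), and your identification $X_{\mu_i-\mu_{2n+1-i}}=\mathbf{t}_i-\mathbf{t}_{i+1}$ (with $\mathbf{t}_{n+1}=0$) is a touch more careful than the paper, which writes $\mathbf{t}_i$ in that slot and relies implicitly on $[\mathbf{t}_{i+1},\mathbf{f}_{i-1}]=0$.
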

\begin{proof}
We first prove (\ref{anotherimage2}) for a positive root $\alpha=\epsilon_i-\epsilon_j$, by inducting on $i$. The base case when $i=j-1$ is given by the definition of $\rho$ on $\mathbf{e}_{j-1}=X_{\epsilon_{j-1}-\epsilon_{j}}$. Now we claim if (\ref{anotherimage1}) is true for $(i,j)$, it is also true for $(i-1,j)$:
\begin{align*}
\rho(X_{\epsilon_{i-1}-\epsilon_j})=&\rho([X_{\epsilon_{i-1}-\epsilon_i},X_{\ep_i-\ep_j}])=[\rho(\mathbf{e}_{i-1}),Z_{\ep_i-\ep_j}+\overline{Z}_{\ep_i-\ep_j}]\\
=&[Z_{\ep_{i-1}-\ep_i}+\overline{Z}_{\ep_{i-1}-\ep_{i}},Z_{\ep_i-\ep_j}+\overline{Z}_{\ep_i-\ep_j}]=Z_{\ep_{i-1}-\ep_j}+\overline{Z}_{\ep_{i-1}-\ep_{j}}.
\end{align*}
Now let us prove (\ref{anotherimage1}). The base case when $i=n$ follows from the definition of $\rho$ on $\mathbf{t}$. Suppose (\ref{anotherimage1}) is true for $i$, then we claim it is also true for $i-1$:
\begin{align*}
\rho(\mathbf{t}_{i-1})=&\rho([X_{\mu_{i-1}-\mu_{2n+2-i}})+\rho(\mathbf{t}_i)\\
=&\rho([X_{\mu_i-1-\mu_i},[X_{\mu_i-\mu_{2n+1-i}},X_{\mu_{2n+1-i}-\mu_{2n+2-i}}]])+\rho(\mathbf{t}_i)\\
=&\rho([\mathbf{e}_{i-1},[\mathbf{t}_i,X_{-\ep_{i-1}+\ep_i}]])+\rho(\mathbf{t}_i)=[\mathbf{e}_{i-1},[\mathbf{t}_{i},\mathbf{f}_{i-1}]]+\rho(\mathbf{t}_i)\\
=&[Z_{\ep_{i-1}-\ep_i}+\overline{Z}_{\ep_{i-1}-\ep_i},[h_i-h_{\overline{i}},Z_{-\ep_{i-1}+\ep_i}+\overline{Z}_{-\ep_{i-1}+\ep_i}]]+h_i-h_{\overline{i}}\\
=&[Z_{\ep_{i-1}-\ep_i}+\overline{Z}_{\ep_{i-1}-\ep_i},Z_{-\ep_{i-1}+\ep_i}-\overline{Z}_{-\ep_{i-1}+\ep_i}]+h_i-h_{\overline{i}}\\
=&h_{i-1}-h_i-h_{\overline{i-1}}+h_{\overline{i}}+h_i-h_{\overline{i}}=h_{i-1}-h_{\overline{i-1}}.
\end{align*}
Now we prove (\ref{anotherimage3}) by induction on $i$. The base case when $i=j-1$ (or $j=i+1$) is as follows
\begin{align*}
\rho(X_{\ep_i-\ep_{i+1}})=&\rho(X_{\mu_i-\mu_{2n-i}})=\rho([X_{\mu_i-\mu_{i+1}},X_{\mu_{i+1}-\mu_{2n-i}}])=\rho([\mathbf{e}_i,\mathbf{t}_{i+1}-\mathbf{t}_{i+2}])\\
=&[X_{\ep_i-\ep_{i+1}}+\overline{X}_{\ep_i-\ep_{i+1}},h_{i+1}-h_{\overline{i+1}}]=X_{\ep_i-\ep_{i+1}}-\overline{X}_{\ep_i-\ep_{i+1}}.
\end{align*}
Now suppose (\ref{anotherimage3}) is true for $(i,j)$ with $i<j<2n+1-j$, we claim it is also true for $(i-1,j)$:
\begin{align*}
\rho(X_{\ep_{i-1}-\ep_j})=&\rho(X_{\mu_{i-1}-\mu_{2n+1-j}})=\rho([X_{\mu_{i-1}-\mu_i},X_{\mu_i-\mu_{2n+1-j}}])=\rho([\mathbf{e}_i,X'_{\ep_i-\ep_j}])\\=&[X_{\ep_{i-1}-\ep_i}+\overline{X}_{\ep_{i-1}-\ep_i},X_{\ep_{i}-\ep_j}-\overline{X}_{\ep_{i}-\ep_j}]=X_{\ep_{i-1}-\ep_j}-\overline{X}_{\ep_{i-1}-\ep_j}.
\end{align*}
Hence we have proved the induction step for positive roots. The arguments for negative roots are similar.
\end{proof}

\subsection{The type B Schur algebra}
Similar to the previous case, fix a positive integer $d$, and let $S$ be the quotient of $U(\mathfrak{gl}_n(\mathbb{C})\oplus \mathfrak{gl}_n(\mathbb{C}))$ under the following relations
\begin{align*}
h_1+\cdots+h_{n}+h_{\overline{1}}+\cdots+h_{\overline{n}}&=d,\\
h_{i}(h_i-1)\cdots(h_{i}-d)&=0, \hspace{.5 in} 1\leq i\leq n,\\
h_{\overline{i}}(h_{\overline{i}}-1)\cdots(h_{\overline{i}}-d)&=0, \hspace{.5 in} 1\leq i\leq n .
\end{align*}
Further, recall that $\mathbf{t}_i$ is defined recursively using generators of $U^i$ in (\ref{morets}). Define the following element
\begin{align*}
\mathbf{h}_i=\frac{1}{2}(\mathbf{t}_i+\mathbf{d}_i), \hspace{.3 in}\mathbf{h}_{\overline{i}}=\frac{1}{2}(-\mathbf{t}_i+\mathbf{d}_i), \hspace{.5 in} (1\leq i\leq n).
\end{align*}

Then Lemma~\ref{rootvectorsfortheothercase} and the definition of $\rho$ implies that $\rho(\mathbf{h}_i)=h_i$ and $\rho(\mathbf{h}_{\overline{i}})=h_{\overline{i}}$. Therefore, let $S^i$  be the quotient of $U(\mathfrak{g}^{\theta})$ under the following relations

\begin{align}
\mathbf{d}_1+\cdots+\mathbf{d}_n=&d, \label{Sirelation1}\\
\h_i(\h_i-1)\cdots(\h_i-d)=&0, \hspace{.5 in} 1\leq i\leq n, \label{Sirelation2}\\
\h_{\overline{i}}(\h_{\overline{i}}-1)\cdots(\h_{\overline{i}}-d)=&0, \hspace{.5 in} 1\leq i\leq n. \label{Sirelation3}
\end{align}
Note: once unraveling the notation of root vectors, these relations are purely in terms of generators $\mathbf{e}_i$, $\mathbf{f}_i$, $\mathbf{t}$ and $\mathbf{d}_i$ of $U^i$.
\begin{prop}
The algebras $S$ and $S^i$ are isomorphic.
\end{prop}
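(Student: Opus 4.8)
The plan is to show that the isomorphism $\rho\colon U^i\to U(\mathfrak{gl}_n\oplus\mathfrak{gl}_n)$ carries the defining ideal of $S^i$ exactly onto the defining ideal of $S$, so that $\rho$ descends to an isomorphism of the two quotients. By Lemma~\ref{rootvectorsfortheothercase}, $\rho$ is an isomorphism of (associative) algebras, so it suffices to match the two ideals on their generators.

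First I would record the images under $\rho$ of the elements appearing in the defining relations of $S^i$. By Lemma~\ref{rootvectorsfortheothercase} and the definitions $\mathbf{h}_i=\tfrac12(\mathbf{t}_i+\mathbf{d}_i)$, $\mathbf{h}_{\overline{i}}=\tfrac12(-\mathbf{t}_i+\mathbf{d}_i)$, we have $\rho(\mathbf{h}_i)=h_i$ and $\rho(\mathbf{h}_{\overline{i}})=h_{\overline{i}}$ for $1\leq i\leq n$; moreover $\mathbf{h}_i+\mathbf{h}_{\overline{i}}=\mathbf{d}_i$, so $\rho(\mathbf{d}_1+\cdots+\mathbf{d}_n)=h_1+\cdots+h_n+h_{\overline{1}}+\cdots+h_{\overline{n}}$. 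Since $\rho$ is an algebra homomorphism, it therefore sends the generator of relation (\ref{Sirelation1}) to $h_1+\cdots+h_n+h_{\overline{1}}+\cdots+h_{\overline{n}}-d$, the polynomial $\h_i(\h_i-1)\cdots(\h_i-d)$ to $h_i(h_i-1)\cdots(h_i-d)$, and $\h_{\overline{i}}(\h_{\overline{i}}-1)\cdots(\h_{\overline{i}}-d)$ to $h_{\overline{i}}(h_{\overline{i}}-1)\cdots(h_{\overline{i}}-d)$. In other words, $\rho$ maps the chosen generating set of the ideal $I^i\subset U^i$ defining $S^i$ bijectively onto the chosen generating set of the ideal $I\subset U(\mathfrak{gl}_n\oplus\mathfrak{gl}_n)$ defining $S$.

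Because $\rho$ is an isomorphism of associative algebras, it carries the two-sided ideal generated by a set $A$ onto the two-sided ideal generated by $\rho(A)$; hence $\rho(I^i)=I$, and $\rho$ induces a well-defined isomorphism $\bar\rho\colon S^i=U^i/I^i\longrightarrow U(\mathfrak{gl}_n\oplus\mathfrak{gl}_n)/I=S$ (the inverse being induced by $\rho^{-1}$, for which $\rho^{-1}(h_i)=\mathbf{h}_i$, $\rho^{-1}(h_{\overline{i}})=\mathbf{h}_{\overline{i}}$, $\rho^{-1}(\sum_i(h_i+h_{\overline{i}}))=\sum_i\mathbf{d}_i$, although this is automatic). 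There is no real obstacle here: the only substantive input is Lemma~\ref{rootvectorsfortheothercase}, already established, and the passage to quotients is formal once the images of the Cartan-type elements $\mathbf{h}_i,\mathbf{h}_{\overline{i}}$ and of $\sum_i\mathbf{d}_i$ are known. One may optionally add, paralleling the remarks after Proposition~\ref{newpresentation}, that combining this with the (unpublished) presentation of the hyperoctahedral Schur algebra via relations of the type (\ref{hrelations1})--(\ref{hrelations3}) exhibits $S^i$ as yet another presentation of the type B Schur algebra in Green's incarnation.
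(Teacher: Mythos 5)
Your argument is correct and is precisely the (essentially tautological) verification that the paper leaves implicit — the paper states this proposition without proof, relying on exactly the observation you make: since $\rho$ restricts to a Lie algebra isomorphism $\mathfrak g^\theta\to\mathfrak{gl}_n\oplus\mathfrak{gl}_n$ (Lemma~\ref{rootvectorsfortheothercase}) and hence, by functoriality of $U(-)$, to an associative algebra isomorphism $U^i\to U(\mathfrak{gl}_n\oplus\mathfrak{gl}_n)$, it suffices to check that the ideal generators (\ref{Sirelation1})--(\ref{Sirelation3}) are carried onto those defining $S$, which follows from $\rho(\mathbf h_i)=h_i$, $\rho(\mathbf h_{\overline i})=h_{\overline i}$, and $\mathbf h_i+\mathbf h_{\overline i}=\mathbf d_i$. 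No gaps.
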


\begin{example} When $n=1$, $S^i$ is the quotient of $\mathbb{C}[\mathbf{t}]$ under the following relations, where $d$ is a positive integer,
\begin{align*}
(\mathbf{t}+d)(\mathbf{t}+d-2)\cdots(\mathbf{t}-d+2)(\mathbf{t}-d)=&0.
\end{align*}
We refer to~\cite{Li19} for a presentation in the quantum case.
\end{example}

\begin{example} When $n=2$, $U(\mathfrak{gl}_{4}(\mathbb{C})^{\theta})$ is generated by $\mathbf{e},\f,\mathbf{t},\mathbf{d}_1,\mathbf{d}_2$.  Relations (\ref{Sirelation1}) - (\ref{Sirelation3}) become explicitly as follows.
\begin{align*}
\mathbf{d}_1+\mathbf{d}_2=&d,\\
([\e,\f]+\D_1)([\e,\f]+\D_1-2)\cdots([\e,\f]+\D_1-2d)=&0,\\
(-[\e,\f]+\D_1)(-[\e,\f]+\D_1-2)\cdots(-[\e,\f]+\D_1-2d)=&0,\\
(\mathbf{t}+\mathbf{d}_2)(\mathbf{t}+\mathbf{d}_2-2)\cdots(\mathbf{t}+\mathbf{d}_2-2d)=&0,\\
(-\mathbf{t}+\mathbf{d}_2)(-\mathbf{t}+\mathbf{d}_2-2)\cdots(-\mathbf{t}+\mathbf{d}_2-2d)=&0.
\end{align*}
\end{example}

\begin{rem}\label{generalfield}
Throughout this article, our choice of the ground field  $\mathbb{C}$ is based on the above desired result and the choice of ground field in the unpublished work of Kujawa-Zhu. Nevertheless, our results also hold for any algebraically closed field $\mathbf{k}$ with characteristic not equal to $2$, without alternation of the proofs.
\end{rem}


\end{document}